%

\input ./style/arxiv-general.cfg
\documentclass[aop,MSNbibl,dvips]{arximspdf}
\makeatletter
   \@ifpackageloaded{graphicx}{}{\usepackage{graphicx}}
\makeatother

%

\doi{10.1214/13-AOP874} 
\volume{43}
\issue{3}
\pubyear{2015}
\firstpage{1082}
\lastpage{1120}

\makeatletter
\renewcommand{\mid}{|}
\newcommand{\rrvert}{\vert}
\newcommand{\llvert}{\vert}
\newcommand{\SLE}{\mathrm{SLE}}
\newtheorem{corollary}{Corollary}[section]
\newtheorem{lemma}[corollary]{Lemma}
\newtheorem{proposition}[corollary]{Proposition}
\newtheorem{theorem}[corollary]{Theorem}
\newproclaim{remark}{Remark}
\newproclaim{definition}{Definition}
\newproclaim{example}{Example}
\newcommand{\dist}{\operatorname{dist}}
\newcommand{\diam}{\operatorname{diam}}
\newcommand{\hcap}{\operatorname{hcap}}
\newcommand{\crad}{\operatorname{crad}}
\newcommand{\hatc}{{\hat c}}
\newcommand{\area}{\operatorname{Area}}
\newcommand{\hatG}{{\widehat G}}
\makeatother

\begin{document}
\begin{frontmatter}

\title{Minkowski content and natural parameterization for the
Schramm--Loewner evolution}
\runtitle{Minkowski content of SLE}

\begin{aug}
\author[A]{\fnms{Gregory F.}~\snm{Lawler}\corref{}\thanksref{T1}\ead[label=e1]{lawler@math.uchicago.edu}}
\and
\author[A]{\fnms{Mohammad A.} \snm{Rezaei}\ead[label=e2]{rezaei@math.uchicago.edu}}
\runauthor{G.~F. Lawler and M.~A. Rezaei}
\affiliation{University of Chicago}
\address[A]{Department of Mathematics\\
University of Chicago\\
5734 University Avenue\\
Chicago, Illinois 60637-1546\\
USA\\
\printead{e1}\\
\phantom{E-mail: }\printead*{e2}} 
\end{aug}
\thankstext{T1}{Supported by NSF Grant DMS-09-07143.}

\received{\smonth{2} \syear{2013}}
\revised{\smonth{7} \syear{2013}}

%
\begin{abstract}
We prove the existence and nontriviality
of the $d$-dimensional 4
Min\-kowski content for the Schramm--Loewner evolution
($\SLE_\kappa$) with $\kappa<8$ and $d = 1 + \frac{\kappa}8$.
We show that this is a multiple of the natural parameterization.
\end{abstract}

%
\begin{keyword}[class=AMS]
\kwd[Primary ]{60J67}
\kwd{60K35}
\kwd[; secondary ]{82B27}
\end{keyword}
\begin{keyword}
\kwd{Schramm--Loewner evolution}
\kwd{Minkowski content}
\kwd{natural parameterization}
\end{keyword}

\end{frontmatter}

\section{Introduction}

A number of measures on paths or clusters on two-dimen\-sional
lattices arising from critical statistical mechanical
models are believed to exhibit some kind of conformal
invariance in the
scaling limit.
Schramm \cite{Sch} introduced a one-parameter family of such processes,
now called the
(\textit{chordal}) \textit{Schramm--Loewner evolution with parameter} $\kappa$
($\SLE_\kappa$)
and showed that these give the only possible limits for conformally
invariant processes in simply connected domains satisfying a certain
``domain Markov property.'' He defined the process as a
probability
measure on curves
from $0$ to $\infty$ in ${\mathbb H}$ and then used conformal invariance
to define the process in other simply connected domains.

The definition of the process in ${\mathbb H}$ uses
the half-plane Loewner equation. Suppose $\gamma\dvtx (0,t] \rightarrow
\overline{{\mathbb H}}$ is a
curve
with $\gamma(0) = 0$, and let $\gamma_t = \gamma(0,t]$.
Let $H_t$ denote the unbounded component of ${\mathbb H}\setminus
\gamma_t$.
We assume that $\gamma$ is noncrossing in the sense that for all
$ s <t $, $ \gamma[s,\infty) \subset
\overline H_s$, and $\gamma[s,t]
\cap H_s$ is nonempty. Let
$g_t\dvtx  H_t \rightarrow{\mathbb H}$ be
the unique conformal transformation with
$g_t(z) - z = o(1)$ as $z \rightarrow\infty$. Then for
every $a >0$, there exists a reparameterization of the curve such
that the following holds:
\begin{itemize}
\item For $z \in{\mathbb H}$, the map $t \mapsto g_t(z)$ is a smooth
flow and satisfies
the Loewner differential equation
\[
\partial_t g_t(z) = \frac{a}{g_t(z) - U_t},\qquad
g_0(z) = z,
\]
where $U_t$ is a continuous function on ${\mathbb{R}}$. This equation
is valid
up to a time $T_z \in(0,\infty]$.
\end{itemize}
Under the reparameterization, the transformation $g_t$ satisfies
\[
g_t(z) = z + \frac{at}{z} + O\bigl(|z|^{-2}\bigr),\qquad z
\rightarrow\infty.
\]
We say that the curve is \textit{parameterized by}
(\textit{half-plane}) \textit{capacity}.
Schramm-defined chordal $\SLE_\kappa$ to be the solution to the Loewner equation
with $a = 2$ and $U_t$ a Brownian motion with variance parameter
$\kappa$.
An equivalent definition (up to a linear time change)
is to choose $U_t$ to be
a standard Brownian motion and $a = 2/\kappa$. It has been shown that
a number of discrete random models have $\SLE$ as the scaling limit provided
that the discrete models are parameterized using (half-plane)
capacity. Examples are loop-erased random walk for $\kappa=2$
\cite{LSW}, Ising interfaces for $\kappa=3$ \cite{Smir2},
harmonic explorer for $\kappa=4$ \cite{SS}, percolation interfaces on
the triangular lattice for $\kappa=6 $ \cite{Smir1}
and uniform spanning trees for $\kappa=8$
\cite{LSW}.

If $D$ is a simply connected domain with distinct boundary points
$w_1,w_2$ where $\partial D$ is nice in neighborhoods of $w_1,w_2$, then
chordal $\SLE_\kappa$ from $w_1$ to $w_2$ in $D$ is defined by taking
the conformal
image of $\SLE_\kappa$ in the upper half plane under a transformation
$F\dvtx  {\mathbb H}\rightarrow D$ with $F(0) = w_1, F(\infty) = w_2$. The
map $F$ is not
unique, but scale invariance of $\SLE$ in ${\mathbb H}$ shows that the
distribution on
paths is independent of the choice. This can be considered as a measure
on the curves $F \circ\gamma$
with the induced parameterization or as a measure
on curves modulo reparameterization.

While the capacity parameterization is useful for analyzing the curve,
it is not
the scaling limit of the ``natural'' parameterization of the discrete
models. For
example, for loop-erased walks, it is natural to parameterize by the
length of
the random walk. One can ask whether the curves parameterized by a normalized
version of this ``natural length'' converge to $\SLE$ with a different
parameterization.
The Hausdorff dimension of the $\SLE$ paths \cite{Bf} is $d=1+\min\{\frac{\kappa}{8},1\}$.
It was conjectured in \cite{LS}
that the ``natural length'' of an $\SLE$ path might be given by the
$d$-dimensional Minkowski content
defined as follows.
Let
\[
{\operatorname{Cont}_d}(\gamma_t;r) = e^{r(2-d)} {\area} \bigl\{z\dvtx  \dist(z,\gamma_t) \leq e^{-r} \bigr\}.
\]
Then the
$d$-dimensional content
is
\[
{\operatorname{Cont}_d}(\gamma_t) = \lim
_{r \rightarrow\infty} {\operatorname{Cont}_d}(
\gamma_t;r),
\]
provided that the limit exists. If $\kappa\geq8$, then $d=2$, and the
two-dimensional Minkowski content is the same as the area and the
limit clearly exists. If $\kappa< 8$, it is not at all obvious that the
limit exists and is positive for $t > 0$. The main goal of this paper
is to prove this.

Before stating the
theorem, we will set some notational conventions for this paper. Let
$ 0 < \kappa< 8$ and let
\[
a = \frac{2} \kappa\in(1/4,\infty),\qquad d = 1 +\frac\kappa8 = 1 +
\frac{1}{4a} \in(1,2).
\]
Recall that $\gamma_t = \gamma(0,t]$ and we write $\gamma=
\gamma_\infty= \gamma(0,\infty)$ for the entire path
of the curve.
The Green's function for $\kappa< 8$
is defined by
\[
G(z) = \lim_{r \rightarrow\infty} e^{r(2-d)} {\mathbb P}\bigl\{\dist(z,
\gamma) \leq e^{-r} \bigr\}.
\]
This limit exists (see Section~\ref{greensec})
and there exists $c = c_\kappa$ such that
\[
G(z) = c [\Im z]^{d-2} [\sin\arg z]^{4a-1}.
\]
Our definition of the Green's function differs by a multiplicative
constant from that in other papers. If $F\dvtx  D\rightarrow
{\mathbb H}$ is a conformal transformation with $F(w_1) = 0$,
$F(w_2) = \infty$, we define
\[
G_{D}(z;w_1,w_2) = \bigl|F'(z)\bigr|^{2-d}
G\bigl(F(z)\bigr).
\]
There is also a two-point Green's function (see Section~\ref{twopointsec})
\[
G(z,w) = \lim_{r \rightarrow
\infty} e^{2r(2-d)} {\mathbb P}\bigl\{
\dist(z,\gamma) \leq e^{-r}, \dist(w,\gamma) \leq e^{-r} \bigr
\}.
\]

If $D \subset{\mathbb H}$, let
\[
G(D) = \int_D G(z) \,dA(z),\qquad G^2(D) = \int
_D \int_D G(z,w) \,dA(z) \,dA(w),
\]
where $dA$ denotes integration with respect to area.
We call $\gamma(t)$ a \textit{double point} for the $\SLE_\kappa$
path if there exists $s < t$ such that $\gamma(t) \in\partial H_s$.
If $0 < \kappa\leq4$, the $\SLE$ path has no double points
while they exist for $4 < \kappa< 8$.

\begin{theorem} \label{bigtheorem}
If $ 0 < \kappa<8$ there exists $\beta> 0$,
such that if $\gamma(t)$ is an
$\SLE_\kappa$ curve from $0$ to $\infty$ in ${\mathbb H}$ parameterized
by capacity, then with probability one, the following
holds:
\begin{itemize}
\item
For every $t > 0$, the Minkowski
content
\[
\Theta_t= {\operatorname{Cont}_d}(\gamma_t)
= \lim_{r \rightarrow\infty} {\operatorname{Cont}_d}(
\gamma_t;r),
\]
exists.
\item The function $t \mapsto\Theta_t$ is
strictly increasing and if $s < t$,
\[
\Theta_t - \Theta_s = {\operatorname{Cont}_d}
\bigl(\gamma[s,t]\bigr) = {\operatorname{Cont}_d}\bigl(\gamma(s,t] \cap
H_s\bigr).
\]
%
%
\item On every bounded interval $[0,t_0]$, $\Theta_t$
is H\"older continuous of order $\beta$.
\end{itemize}
Moreover,
if $D \subset{\mathbb H}$ is a bounded domain with piecewise
smooth boundary, then
\begin{eqnarray*}
{\mathbb E}\bigl[{\operatorname{Cont}_d}(\gamma\cap D)\bigr] &=& G(D),
\\
{\mathbb E} \bigl[{\operatorname{Cont}_d}(\gamma\cap
D)^2 \bigr] &=& G^2(D),
\end{eqnarray*}
and if $t > 0$,
\[
{\mathbb E}\bigl[{\operatorname{Cont}_d}(\gamma\cap D) \mid
\gamma_t\bigr] = {\operatorname{Cont}_d}(
\gamma_t \cap D) + \int_D G_{H_t}
\bigl(z;\gamma(t),\infty\bigr) \,dA(z).
\]
\end{theorem}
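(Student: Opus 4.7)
The strategy is to establish precise first and second moment formulas for the approximating functional
\[
\mcon(\gamma\cap D;r) := e^{r(2-d)}\,\area\{z\in D:\dist(z,\gamma)\le e^{-r}\},
\]
upgrade the convergence to $L^2$ and then almost sure convergence with a power rate, and finally invoke the domain Markov property of $SLE$ to define $\Theta_t$ globally, verify its additivity, and obtain H\"older continuity. By Fubini the first moment equals $e^{r(2-d)}\int_D \Prob\{\dist(z,\gamma)\le e^{-r}\}\,dA(z)$; combining the existence of the one-point Green's function $G$ (Section \ref{greensec}) with a uniform upper bound $e^{r(2-d)}\Prob\{\dist(z,\gamma)\le e^{-r}\}\le C\,G(z)$ on $D$, dominated convergence yields $\E[\MC(\gamma\cap D)]=G(D)$. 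The analogous double integral against the two-point hitting probability, together with the existence of $G(z,w)$ and an analogous uniform bound, gives $\E[\MC(\gamma\cap D)^2]=G^2(D)$.

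The key analytic input is a \emph{quantitative} two-point estimate
\[
e^{2r(2-d)}\Prob\{\dist(z,\gamma)\le e^{-r},\,\dist(w,\gamma)\le e^{-r}\} = G(z,w)\bigl(1+O(e^{-\alpha r})\bigr)
\]
locally uniformly on $\{z\ne w\}$ for some $\alpha>0$, obtained by conditioning on the first approach of $\gamma$ to $\{z,w\}$ and using a conformal comparison of the future curve (an $SLE_\kappa$ in the corresponding slit half-plane by the Markov property) with an unconditioned $SLE$. Combined with a one-point estimate of the same form, this produces
\[
\E\bigl[(\mcon(\gamma\cap D;r) - \mcon(\gamma\cap D;r'))^2\bigr] \le C\,e^{-\alpha\min(r,r')},
\]
so $\{\mcon(\gamma\cap D;r)\}$ is Cauchy in $L^2$. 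Borel--Cantelli along $r_n=n$, interpolated between integer scales by the near-monotonicity of $r\mapsto \area\{z\in D:\dist(z,\gamma)\le e^{-r}\}$, upgrades this to a.s.\ convergence and defines $\MC(\gamma\cap D)$.

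For the conditional expectation statement, conditional on $\gamma_t$ the curve $\gamma[t,\infty)$ is an independent $SLE_\kappa$ in $H_t$ from $\gamma(t)$ to $\infty$ by the domain Markov property. Splitting $\gamma\cap D$ at time $t$ (the interface contributes zero content, as it is covered by finitely many points) and applying the first moment formula inside $H_t$ gives
\[
\E[\MC(\gamma\cap D)\mid \gamma_t] = \MC(\gamma_t\cap D) + \int_D G_{H_t}(z;\gamma(t),\infty)\,dA(z),
\]
so $M_t^D:=\MC(\gamma_t\cap D)+\int_D G_{H_t}(z;\gamma(t),\infty)\,dA(z)$ is a non-negative martingale. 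Monotone approximation of $\Half$ by bounded $D_n$, controlled by uniform second moments, defines $\Theta_t$ simultaneously at all rational $t$ on a full-probability event and establishes the additivity $\Theta_t-\Theta_s=\MC(\gamma[s,t])$. Kolmogorov's criterion applied to the moment bound $\E[\MC(\gamma[s,t])^p]\le C_p(t-s)^{\beta p}$---itself a scaling consequence of the one- and two-point formulas---delivers H\"older continuity of some order $\beta>0$. Strict monotonicity comes from positivity of $\E[\MC(\gamma[s,t])]$ together with a zero-one argument via the strong Markov property, and $\MC(\gamma_t\cap\dpoint)=0$ follows because the off-diagonal decay of $G(z,w)$ makes the integral over pairs $(z,w)$ approached by the curve at very separated capacity times of strictly smaller order than the diagonal contribution.

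The main obstacle is the quantitative two-point estimate: producing an explicit power-rate $e^{-\alpha r}$ error, rather than mere convergence, is substantially more delicate than proving existence of $G(z,w)$ and requires careful conformal-invariant control of the conditional $SLE$ after its first close approach to the pair $\{z,w\}$. Once this is in hand, the remaining assertions---$L^2$ convergence, the martingale identity, H\"older regularity, strict monotonicity, and the double-point statement---follow by relatively standard $SLE$ techniques.
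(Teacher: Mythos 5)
Your overall architecture (moment formulas, $L^2$ Cauchy property, Borel--Cantelli, domain Markov property, Doob--Meyer identification) matches the paper's, but there are two genuine gaps at exactly the places where the real work lies. First, you identify the key input as a sharp two-point asymptotic $e^{2r(2-d)}\Prob\{\dist(z,\gamma)\le e^{-r},\,\dist(w,\gamma)\le e^{-r}\}=G(z,w)(1+O(e^{-\alpha r}))$ and then defer its proof. The paper deliberately avoids needing any such asymptotic: it works with the smoothed increments $Q_r^{\lambda,\delta}(z)=J_r^\lambda(z)-J_{r+\delta}^\lambda(z)$ and proves only the upper bound $\E[Q_r^{\lambda,\delta}(z)Q_r^{\lambda,\delta}(w)]\le c\,e^{-\beta r}|z-w|^{\beta-2}$ (Theorem \ref{keyestimate}). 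This requires a sharp \emph{one}-point estimate with error rate (Propositions \ref{limitprop} and \ref{onesided}, via coupling of the radial Bessel process for two-sided radial $SLE$), crude two-point upper bounds from \cite{LW}, and the ``once close, unlikely to return'' estimates of Section \ref{twopointsec} --- the point being that $\E[Q_r(w)\mid\F_\sigma]$ is small because the one-point estimate has already converged. Your route asks for strictly more than existence of $G(z,w)$ (which itself has no known power rate in the form you need, including at mixed scales $r\ne r'$ for the cross term), and you also do not address integrability of the error near the diagonal $z=w$, which is what the factor $|z-w|^{\beta-2}$ is for. So the ``main obstacle'' you flag is not a deferred technicality; it is the theorem, and the paper's increment decomposition is the device that makes it tractable.

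Second, your treatment of the decomposition at time $t$ --- ``the interface contributes zero content, as it is covered by finitely many points'' --- fails for $4<\kappa<8$. Minkowski content is not additive over sets at distance zero, and $\gamma[t,\infty)$ returns to touch $\overline{\gamma_t}$ on the uncountable set of double points. To get $\MC(\gamma\cap D)=\MC(\gamma_t\cap D)+\MC(\gamma[t,\infty)\cap D)$ and the identity $\Theta_t-\Theta_s=\MC(\gamma[s,t])$ one must show $\MCup(\gamma[s+u,\infty)\cap\p_n H_s)\to 0$, which is Lemma \ref{hardlemma} in the paper and rests on the second-return construction ($\hat Z(\dsquare)$, the events $E(\dsquare)$, and Corollary \ref{hatrack}); this also yields $\MC(\gamma_t\cap\dpoint)=0$, for which your appeal to ``off-diagonal decay of $G(z,w)$'' is not an argument. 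A smaller but real issue: your Kolmogorov-criterion route to H\"older continuity needs $\E[\MC(\gamma[s,t])^p]\le C_p(t-s)^{\beta p}$ with $\beta p>1$ uniformly in $s$; with only second moments available this amounts to bounding $\int\int G_{H_s}(z,w;\gamma(s),\infty)$ over the random fractal domain $H_s$, which is delicate (the paper even exhibits a comb domain where the first-moment integral diverges). The paper instead combines the known H\"older continuity of the capacity parametrization with the a.s.\ simultaneous bound $Z(\dsquare)\le n2^{-dn/2}$ on dyadic squares (Corollary \ref{aug27.lemma1}), which sidesteps conditional moment estimates entirely.
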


The proof will show that we can choose any $\beta<\alpha_0d/2$
where
\[
\beta< \frac{d}{2} \min\biggl\{ 1-\frac{\kappa}{24+2\kappa-8\sqrt
{8+\kappa}}, \frac{1}2
\biggr\} >0.
\]

The theorem allows us to define $\SLE_\kappa$ with the
\textit{natural parameterization} by letting
\[
\tilde\gamma(t) = \gamma(\sigma_t),\qquad \sigma_t = \inf\{s\dvtx
\Theta_s = t \}.
\]
Under this parameterization with probability one for
all $t$,
\[
{\operatorname{Cont}_d}(\tilde\gamma_t) = t.
\]

If $F\dvtx  {\mathbb H}\rightarrow D$ with $F(0) = w_1,F(\infty)
= w_2$ is a conformal transformation, then
as in \cite{LR} the natural parameterization
in $D$ can be defined by saying that the
time to traverse $F(\tilde\gamma[s,t])$ is
%
\begin{equation}
\label{aug291} \int_s^t \bigl|F'\bigl(
\tilde\gamma(r)\bigr)\bigr|^d\,dr.
\end{equation}
If $\tilde\gamma[s,t] \subset{\mathbb H}$, we can
see that this is the same as
$ {\operatorname{Cont}_d}[F \circ\tilde\gamma[s,t]] $. We expect
this to be true for all nice $D$. The only question is
the intersection of the curve
with the boundary for $4 < \kappa< 8$ with $D$ having
a nonsmooth boundary, perhaps of large dimension.

As
an example, let $D$ be the unit disk ${\mathbb D}$ and let $w_1 = 1,
w_2 = -1 $. In this case, the map $F\dvtx  {\mathbb H}\rightarrow{\mathbb
D}$ extends
analytically to ${\mathbb{R}}$ and there is no problem establishing
that (\ref{aug291})
equals $ {\operatorname{Cont}_d}[F \circ\tilde\gamma[s,t]] $.
Let $\gamma(t)$ be the $\SLE_\kappa$
path in ${\mathbb H}$ with the capacity parameterization,
and let $\eta(t) = F(\tilde\gamma(t))$ which is
an $\SLE_\kappa$ curve from $1$ to $-1$
in~${\mathbb D}$. Let
$ \Theta_t = {\operatorname{Cont}_d}[\eta_t]$.
In this case, $\Theta_\infty$ is an integrable random variable with
\[
{\mathbb E}[\Theta_\infty] = \int_{\mathbb D}G_{\mathbb D}(z;1,-1)
\,dA(z) < \infty.
\]
Moreover,
\[
{\mathbb E}[\Theta_\infty\mid\eta_t] = \Theta_t
+ \Psi_t,
\]
where
\[
\Psi_t = \int_{D_t} G_{D_t}\bigl(z;
\eta(t),-1\bigr) \,dA(z).
\]
Since $M_t:= {\mathbb E}[\Theta_\infty\mid\eta_t]$ is a
martingale, we
can see that $\Theta_t$ is the unique increasing process such that
$\Psi_t + \Theta_t$ is a martingale. This is a Doob--Meyer
decomposition.

In \cite{LS}, the natural parameterization was \textit{defined} to
be the unique process $\Theta_t$ which makes $\Psi_t +
\Theta_t$ a martingale. While this is a simple definition,
it requires moment bounds in order to make sure that the
process exists (uniqueness is easy). Indeed, it is not hard
to see that $M_t(z):= G_{D_t}(z;\eta(t), -1)$ is a local martingale,
and hence $\Psi_t$ is an integral of positive local martingales. If
$\Psi_t$ were also a local martingale, then no nontrivial $ \Theta_t$
could exist.

\begin{itemize}
\item In \cite{LS}, it was shown that for $\kappa< 5.0,\ldots,$ the process
$\Theta_t$ exists in ${\mathbb H}$ (the definition has to be modified slightly
in ${\mathbb H}$ because $\Psi_0$ as we have defined it above is
infinite---this is not very difficult). The necessary second moment bounds
were obtained using the reverse Loewner flow. It was shown that
for this range of $\kappa$, there exists $\alpha_0 = \alpha_0(\kappa)
>0$ such that the function $t \mapsto\Theta_t$
is H\"older continuous of order $\alpha$ for $\alpha<\alpha_0$.
\item In \cite{LZ}, the natural parameterization was shown to exist
for all $\kappa< 8$. There the necessary two-point estimates
were obtained from estimates on the two-point Green's function
\cite{Bf,LW}. However, the estimates were not strong enough to
determine H\"older continuity of the function $\Theta_t$.
\item In \cite{LR}, a new proof was given for all
$\kappa< 8$ combining ideas in \cite{LS,LZ} with known results
about the H\"older continuity of the Schramm--Loewner evolution
(with respect to the capacity parameterization). This established
continuity and H\"older continuity of the natural parameterization
for all~$\kappa$.
\end{itemize}

Let us discuss some conclusions that we can
derive. If $\Theta_t = {\operatorname{Cont}_d}(\gamma_t)$,
then clearly $\Theta_t$ is increasing and
measurable with respect to $\gamma_t$.
The conditional distribution of ${\operatorname{Cont}_d}[\gamma(t,
\infty)]$ given $\gamma_t$ is the same as the
distribution of the Minkowski content for $\SLE$
from $\gamma(t) $ to $-1$ in $D_t$. In
particular, using the fact that $\Theta_t - \Theta_s
= {\operatorname{Cont}_d}(\gamma(s,t] \cap D_s)$,
we have
\[
{\mathbb E}[\Theta_\infty- \Theta_t \mid
\gamma_t ] = \int_{D_t} \hatG_t(z) \,d
A(z),
\]
where $\hatG_t(z) = G_{D_t}(z;\gamma(t), -1)$.
Therefore,
\[
\Theta_t + \int_{D_t} \hatG_t(z) \,d
A(z)
\]
is a martingale.
Uniqueness of the Doob--Meyer decomposition shows that our
$\Theta_t$ must be the same as the natural parameterization
as discussed in \cite{LS,LZ,LR}. Using the Minkowski content
as the definition, we immediately\vadjust{\goodbreak} get independence of domain
as well as reversibility of the natural parameterization, that is,
the time to traverse $\gamma[s,t]$ is the same as the
time to traverse the path in the reverse direction. By
independence of domain, we mean that if $\gamma$ is an $\SLE_\kappa$
curve in ${\mathbb H}$ and
$D \subset{\mathbb H}$ with $\gamma(0,\infty) \subset D$,
then
the natural parameterization for $\gamma$
considered as an $\SLE$ curve in $D$ is the same as
that for the $\SLE$ curve in ${\mathbb H}$.
While this is clearly a property that we would expect
from a ``natural'' parameterization,
it is not at all obvious using the definition in \cite{LS}.

Another possible candidate for the ``natural length'' of an
$\SLE$ curve might be the $d$-dimensional Hausdorff measure.
However, it has been proved \cite{Rez} that this is zero with
probability one. It is unknown whether one can find a Hausdorff measure
with a different gauge function which would give a nontrivial quantity.

\subsection{Outline of the paper}

Section~\ref{prelimsec} sets notation for
the paper and reviews previous work.
We define the
Minkowski content in Section~\ref{minksec} and derive
some simple properties.
The Green's function for chordal $\SLE_\kappa$
is reviewed in Section~\ref{greensec}. This is a normalized
limit of the probability of getting near a point
$z$. We also discuss estimates in \cite{LW}
concerning the probability that an $\SLE_\kappa$ path
gets close to two points. In the following subsection,
we discuss some of the ideas used to prove two-point
estimates; in particular, some precise formulations are
made of the rough
statement ``after an SLE curve gets close to $z$ it is
unlikely to get close again.''
This section uses ideas from
\cite{Law3,LW}.

The proof of the main result is in the remainder
of the paper. Before going into specifics, let
us outline the basic idea of the proof. For ease,
let us fix a square, say $\Gamma= [0,1) +
i[1,2)$ and consider $\gamma\cap\Gamma$.
For each $z \in\Gamma$ and $r >0$, let
$\tau_r(z) = \inf\{t\dvtx  |\gamma(t) - z|
\leq e^{-r}\}$ and let $J_r(z)$ be
$e^{r(2-d)}$ times
the indicator function of the event $\{\tau_r(z)
< \infty\}$. Let $T_r(z)$ be the first
time that the conformal radius of $z$ in
${\mathbb H}\setminus\gamma(0,t]$ equals $e^{-r+2}$.
The Koebe $(1/4)$-theorem implies that
$T_r(z) < \tau_r(z)$. By comparison with
``two-sided radial'' ($\SLE$ conditioned to
go through $z$), one can show that
there exists $c_1$ such that
${\mathbb P}\{T_r(z) < \infty\} \sim c_1 G(z)
e^{r(d-2)}$. If $r$ is large, and we view
the path $\gamma[0,T_r(z)] $ near $z$, then
locally it appears like a path with the distribution
of two-sided radial $\SLE$. Using this, one can see
that
\[
{\mathbb P}\bigl\{J_r(z) >0 \mid T_r(z) < \infty
\bigr\} = \rho+o(1),\qquad r \rightarrow\infty,
\]
where $\rho$ is independent of $z$, and
using this in turn, we get a one-point estimate
%
\begin{equation}
\label{sep110} {\mathbb E}\bigl[J_r(z)\bigr] = c_1 \rho
G(z) + o(1).
\end{equation}
If we fix $\delta> 0$, we can see similarly
that there
exists $\rho'$ such that
\[
{\mathbb P}\bigl\{J_{r+\delta}(z) >0 \mid T_r(z) < \infty
\bigr\} = \rho' +o(1),\qquad r \rightarrow\infty,
\]
and by using (\ref{sep110}), we see that
$\rho' = e^{\delta(d-2)} \rho$. In other
words, ${\mathbb E}[J_{r+\delta}(z) - J_r(z)]
= o(1)$. The conditional
distribution of $ J_{r+\delta}(z)
- J_r(z)$ given $\gamma[0,T_r(z)]$ is determined
(up to a small error) by the way the curve $\gamma$
looks near $\gamma(T_r(z))
$, and this latter distribution
is understood through two-sided radial $\SLE_\kappa$.
If $z,w$ are not
very close together and the $\SLE$ curve gets close to
both $z$ and $w$, we might hope (and, indeed, this is
what we show) that the local behavior of $\gamma$
near $\gamma(T_r(z))$ and near $\gamma
(T_r(w))$ are almost
independent. The upshot of this is that if we
consider the random variable
\[
Y_r = \int_\Gamma\bigl[ J_{r+\delta}(z) -
J_r(z)\bigr] \,dA(z),
\]
then ${\mathbb E}[Y_r^2]$ is small. We show that
${\mathbb E}[Y_r^2] \leq c e^{-\beta r}$, from which
we conclude that
\[
\lim_{r \rightarrow\infty} \int_\Gamma
J_r(z) \,dA(z)
\]
exists as a limit in $L^2$ and with
probability one.

This outline is carried out in Section~\ref{mainsec}
assuming a moment bound, Theorem~\ref{keyestimate}
which is proved later. This establishes that with probability one
\mbox{${\operatorname{Cont}_d}[\gamma\cap\Gamma]$} exists for every
dyadic square $\Gamma$. Section~\ref{natsec}
uses this to prove the statements in Theorem
\ref{bigtheorem}, again leaving one fact for the
last section.

The main estimates are proved in the final
section. Section~\ref{onepointsec} analyzes
the one-point estimate, that is, the estimate
for getting close to a single point $z$. See Theorem~\ref{lime}. A key
to the two-point estimate is to understand
the one-point estimate very well. For ease,
we consider $\SLE$ in the disk between boundary
points and choose the origin to be the target
point. Two-sided radial, which is an example
of what are sometimes called $\SLE(\kappa,\rho)$
processes, describes chordal $\SLE$ ``conditioned
to go through $z$.'' It can be analyzed by a
one-dimensional SDE. We use this
to study $\SLE$ conditioned to get near $z$.
To do the two-point
estimate, we start in Section~\ref{twopointsec}
by reviewing the basic idea that after one
gets close to a point, one tends not to return
to it. This statement requires care to
make precise. See Lemmas~\ref{inpart} and~\ref{inpart2}. In the final
section, we complete
the proof giving a rigorous version of the
rough outline above.

\section{Preliminaries} \label{prelimsec}

\subsection{Notations and distortion}
\label{notsec}

We fix $\kappa< 8$ and allow all constants to depend implicitly
on $\kappa$. Recall that $a = 2/\kappa$ and $d = 1 + \frac{\kappa}{8}$.
If $\gamma$ is an $\SLE_\kappa$ curve from $w_1$ to $w_2$
in a simply connected domain $D$, we write
$\gamma_t = \gamma(0,t] = \{\gamma(s)\dvtx  0 < s \leq t \}$.

If $n,j,k$ are integers, we write $\Gamma_n(j,k) $
for the dyadic square
\[
\Gamma_n(j,k) = \bigl[j2^{-n}, (j+1)2^{-n}\bigr) \times i
\bigl[k2^{-n}, (k+1) 2^{-n}\bigr).
\]
Let
\begin{eqnarray*}
{\mathcal Q}_n &=& \bigl\{\Gamma_n(j,k)\dvtx  j \in{\mathbb
Z}, k \geq0\bigr\},\qquad
{\mathcal Q}_n^+ = \bigl\{\Gamma_n(j,k)
\in{\mathcal Q}_n\dvtx  k > 0\bigr\},
\\
{\mathcal Q}&=& \bigcup_{n \in{\mathbb Z}} {\mathcal
Q}_n,\qquad {\mathcal Q}^+ = \bigcup_{n \in{\mathbb Z}} {
\mathcal Q}_n^+.
\end{eqnarray*}
%
We
will need the
following simple distortion estimate.

\begin{lemma} \label{growth}
There exists $\delta> 0$ such that if $f\dvtx  {\mathbb D}\rightarrow
f({\mathbb D})$
is a conformal transformation with $f(0) = 0$, $|f'(0)| = \lambda$
and $|z| \leq\delta$
%
\[
|z| \exp\bigl\{-4|z|\bigr\} \leq\bigl|f^{-1}(\lambda z)\bigr| \leq|z| \exp\bigl\{4|z|\bigr\}.
\]
\end{lemma}

\begin{pf} By scaling, we may assume that $f'(0) = 1$.
The growth theorem (see, e.g., \cite{Law1}, Theorem 3.23),
states that for all $|z| < 1$,
\[
\frac{|z|}{(1+|z|)^2} \leq\bigl|f(z)\bigr| \leq\frac{|z|}{(1-|z|)^2}.
\]
Since $(1\pm|z|)^{-2} = 1 \mp2 |z| + O(|z|^2)$
and $\exp\{\pm4|z|\} = 1 \pm4|z| + O(|z|^2)$, we get the lemma.
\end{pf}

\subsection{Minkowski content} \label{minksec}

The $d$-dimensional Minkowski content is one way to ``measure'' the
size of a $d$-dimensional fractal. We use the quotes because the
content is not technically a measure. Its definition is in some ways
more natural
than $d$-dimensional Hausdorff measure; however, it has the
disadvantage that it is defined in terms of a limit that does not
always exist.
We will restrict our
consideration to $1 < d < 2$ and $V \subset{\mathbb C}$.

Let
\begin{eqnarray*}
{\operatorname{Cont}_d}(V;r) & = & e^{r(2-d)} \area\bigl\{z\dvtx
\dist(z,V) \leq e^{-r} \bigr\}
\\
& = & e^{r(2-d)} \int_{\mathbb C}1\bigl\{\dist(z,V) \leq
e^{-r} \bigr\} \,dA(z).
\end{eqnarray*}
Here, and throughout this paper, $dA$ denotes integration
with respect to two-dimensional Lebesgue measure.
The \textit{upper and lower $d$-dimensional Minkowski contents}
are defined by
\begin{eqnarray*}
{\operatorname{Cont}_d^+}(V;r) &=& \sup_{s \geq r} {
\operatorname{Cont}_d}(V;s),\qquad {\operatorname{Cont}_d^+}(V)
= \lim_{r \rightarrow\infty} {\operatorname{Cont}_d^+}(V;r),
\\
{\operatorname{Cont}_d^-}(V;r) &=& \inf_{s \geq r} {
\operatorname{Cont}_d}(V;s),\qquad {\operatorname{Cont}_d^-}(V)
= \lim_{r \rightarrow\infty} {\operatorname{Cont}_d^-}(V;r).
\end{eqnarray*}
The \textit{$d$-dimensional Minkowski content}
is defined if ${\operatorname{Cont}_d^+}(V) = {\operatorname
{Cont}_d^-}(V) $
in which case
\[
{\operatorname{Cont}_d}(V) = \lim_{r \rightarrow\infty} {
\operatorname{Cont}_d}(V;r).
\]

The following simple lemma lists the basic properties
of Minkowski content that we will use.

\begin{lemma} \label{symdifprop} $ $

\begin{itemize}
\item
If ${\operatorname{Cont}_d}(V), {\operatorname{Cont}_d}(V')$ exist
and $\dist(V,V') > 0$,
then ${\operatorname{Cont}_d}(V \cup V')$ exists and
\[
\label{aug250} {\operatorname{Cont}_d}\bigl(V \cup V'
\bigr) = {\operatorname{Cont}_d}(V) + {\operatorname{Cont}_d}
\bigl(V'\bigr).
\]
\item If ${\operatorname{Cont}_d}(V)$ exists, then
\[
{\operatorname{Cont}_d}(V) \leq{\operatorname{Cont}_d^+}
\bigl(V \cup V'\bigr) \leq{\operatorname{Cont}_d}(V) + {
\operatorname{Cont}_d^+}\bigl( V'\bigr).
\]
\item If $d > 1$ and
$D$ is a bounded domain whose boundary is a
piecewise analytic curve, then ${\operatorname{Cont}_d}(D) = 0$. If $V
\subset\overline D$, then
%
\begin{equation}
\label{jump} {\operatorname{Cont}_d}(V) = \lim
_{r \rightarrow\infty} e^{r(2-d)} \int_D 1\bigl\{
\dist(z,V) \leq e^{-r} \bigr\} \,dA(z),
\end{equation}
provided that either side exists.
\item
Suppose $V_1,V_2,\ldots$ are bounded
sets for which ${\operatorname{Cont}_d}(V_n)$ is well defined.
Let $V$ be a bounded set such that
\[
\lim_{n \rightarrow\infty} \bigl[{\operatorname{Cont}_d^+}(V
\setminus V_n) + {\operatorname{Cont}_d^+}(V_n
\setminus V) \bigr] = 0.
\]
Then ${\operatorname{Cont}_d}(V)$ exists and
%
\begin{equation}
\label{aug251} {\operatorname{Cont}_d}(V) = \lim
_{n \rightarrow\infty} {\operatorname{Cont}_d}(V_n).
\end{equation}
\end{itemize}
\end{lemma}

\begin{pf} We leave this to the reader. The last conclusion uses
\begin{eqnarray*}
{\operatorname{Cont}_d}(V_n;r) - {\operatorname{Cont}_d}(V_n
\setminus V;r) & \leq&{\operatorname{Cont}_d}(V;r)
\\
& \leq& {\operatorname{Cont}_d}(V_n;r) + {\operatorname
{Cont}_d}(V\setminus V_n;r).
\end{eqnarray*}\upqed
\end{pf}

\subsection{Green's function} \label{greensec}

The Green's function for chordal $\SLE_\kappa$ is the normalized
probability that the path gets near a point $z$. By nature,
it is defined up to a multiplicative constant and we choose the
constant in a way that will be convenient for us. The precise
definition uses the following theorem.
If $D$ is a simply connected
domain and $z \in D$ we let $\crad_D(z)$ denote the conformal
radius of $z$ in $D$, that is, if $f\dvtx {\mathbb D}\rightarrow D$ is
a conformal transformation with $f(0) = z$, then $\crad_D(z)
= |f'(0)|$.

\begin{theorem} \label{greentheorem}
For every $\kappa< 8$, there exists $ c' = c'(\kappa),
\hat c
= \hat c(\kappa), \alpha< \infty$ such that if $w = e^{2i\theta}
\in\partial{\mathbb D}$ and $\gamma$ is a chordal $\SLE_\kappa$
path from $1$ to $w$ in ${\mathbb D}$, then
%
\begin{eqnarray}\label{lemon}
{\mathbb P}\bigl\{\crad_{A}(0) \leq e^{-r}
\bigr\} &=& c' [\sin\theta]^{4a-1} e^{r(d-2)} \bigl[1 +
O\bigl(e^{-\alpha r}\bigr)\bigr],
\nonumber\\[-8pt]\\[-8pt]
{\mathbb P}\bigl\{\dist(0,\partial A) \leq e^{-r} \bigr\} &=& \hat c [
\sin\theta]^{4a-1} e^{r(d-2)} \bigl[1 + O\bigl(e^{-\alpha r}
\bigr)\bigr].\nonumber
\end{eqnarray}
Here, $A$ denotes the connected component
of ${\mathbb D}\setminus\gamma$
containing the origin.
\end{theorem}

\begin{pf} For the first expression see,
for example, \cite{LW}. The proof gives an
explicit form for $c'$ but we will not need it.
The second was proved in \cite{LR}, but
we reprove it here in Theorem~\ref{lime}.
This proof does not give an explicit expression
for the constant $\hat c$.
\end{pf}

To\vspace*{2pt} be precise, let
${\mathbb P}_\theta$ denote the probability distribution
on paths $\gamma= \gamma[0,\infty)$ given by
chordal $\SLE_\kappa$ from $1$ to $e^{2i\theta}$ in
${\mathbb D}$. Then there exist $c', \hat c, \alpha, c$, depending
only on $\kappa$,
such that for all $\theta$ and all $r \geq1/2$,
\begin{eqnarray*}
\bigl\llvert e^{r(2-d)} [\sin\theta]^{1-4a} {\mathbb
P}_\theta\bigl\{\crad_A(0) \leq e^{-r}\bigr\} -
c'\bigr\rrvert &\leq& c e^{-\alpha r},
\\
\bigl\llvert e^{r(2-d)} [\sin\theta]^{1-4a} {\mathbb
P}_\theta\bigl\{\dist(0,\gamma) \leq e^{-r}\bigr\} - \hat c
\bigr\rrvert &\leq& c e^{-\alpha r}.
\end{eqnarray*}
%
From the previously proven (\ref{lemon})
and the Koebe $(1/4)$-theorem,
we can easily deduce the following estimate which
we will use before deriving Theorem~\ref{lime}.
\begin{itemize}
\item If $\gamma$ is
an $\SLE_\kappa$ path from $0$ to $\infty$ in ${\mathbb H}$,
$\Im(z) \geq1$ and $r \geq0$,
%
\begin{equation}
\label{interior} {\mathbb P}\bigl\{\dist(z,\gamma) \leq e^{-r} \bigr\}
\asymp\bigl[\Im(z)\bigr]^{d-2} [\sin\arg z]^{4a-1}
e^{r(d-2)}.
\end{equation}
%
\end{itemize}
We also use the following estimate, see \cite{AK}.
\begin{itemize}
\item If $x > 0$,
%
\begin{equation}
\label{boundary} {\mathbb P}\bigl\{\dist(x,\gamma) \leq e^{-r} \bigr\}
\leq c \bigl[e^{-r}/x\bigr]^{4a-1}.
\end{equation}
\end{itemize}

If $w_1,w_2$ are distinct boundary points
of a simply connected domain $D$, let
$S_D(z;w_1,w_2)$ denote the sine of the argument of $z$ with
respect to $w_1,w_2$, that is, if $F\dvtx D \rightarrow{\mathbb H}$
is a conformal transformation with $F(w_1) = 0, F(w_2) = \infty$,
then $S_D(z;w_1,w_2) = \sin[\arg F(z)]$. Note that $S_D(z;w_1,w_2)$
is a conformal invariant and $\crad_D(z)$ is conformally covariant,
$\crad_{f(D)}(f(z)) = |f'(z)| \crad_{D}(z)$.
The \textit{chordal Green's function} is defined by
%
\begin{equation}
\label{greenfunction} G_D(z;w_1,w_2) = \hatc
\crad_D(z)^{d-2} S_D(z;w_1,w_2)^{4a-1}.
\end{equation}
Here, we choose the constant $\hat c$ from
Theorem~\ref{maintheorem}; our definition
differs from the definition
elsewhere (e.g., in \cite{LW})
by a multiplicative constant. Previously\vspace*{1pt} it
was defined so that $G_{\mathbb H}(z;0,\infty) = \Im(z)^{d-2}
[\sin\arg z]^{4a-1} = [\crad_{\mathbb H}(z)/2]^{d-2}\*
S_{\mathbb H}(z;0,\infty)^{4a-1}$.
The Green's function satisfies the conformal
covariance rule
\[
G_D(z;w_1,w_2) = \bigl|f'(z)\bigr|^{2-d}
G_{f(D)}\bigl(f(z);f(w_1), f(w_2)\bigr).
\]
We choose the definition (\ref{greenfunction}) so that we
do not need to keep writing the constant $\hat c$.
Theorem~\ref{greentheorem} extends immediately
to other simply connected
domains by conformal invariance of $\SLE$.

\begin{theorem} \label{Greenother}
If
$\kappa< 8$, $\gamma$ is a chordal $\SLE_\kappa$
path from $w_1$ to $w_2$
in a simply connected domain $D$, then
for $z \in D$ with $\dist(z,\partial D) \geq
2 e^{-r}$,
\[
{\mathbb P}\bigl\{\dist(z,\gamma) \leq e^{-r}\bigr\} =
G_D(z;w_1,w_2) e^{r(d-2)} \bigl[1 + O
\bigl(e^{-\alpha r}\bigr)\bigr],
\]
for some $\alpha>0$ which depends only on $\kappa$.
\end{theorem}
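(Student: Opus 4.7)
The plan is to reduce Theorem \ref{Greenother} to Theorem \ref{greentheorem} by mapping $D$ conformally onto $\Disk$ and controlling the resulting distortion via the Koebe theorem. Let $f : D \to \Disk$ be the conformal transformation with $f(z) = 0$, $f(w_1) = 1$, and $f(w_2) = e^{2i\theta}$; by the definitions of conformal radius and of $S_D$ one has $\sin\theta = S_D(z;w_1,w_2)$ and $|f'(z)| = 1/\crad_D(z)$. Conformal invariance of $SLE_\kappa$ then gives that $\eta := f \circ \gamma$ is chordal $SLE_\kappa$ from $1$ to $e^{2i\theta}$ in $\Disk$, so Theorem \ref{greentheorem} is available for $\eta$.

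The next step is to translate the event $\{\dist(z,\gamma) \leq e^{-r}\}$ into an event for $\eta$. The hypothesis $\dist(z,\partial D) \geq 2e^{-r}$ places the ball $B(z,e^{-r})$ well inside $D$, so Koebe's distortion theorem applied to the restriction of $f$ to $B(z,\dist(z,\partial D))$ yields, uniformly for $\zeta \in B(z,e^{-r})$,
\[ (1-\epsilon_r)\,|f'(z)|\,|\zeta-z| \;\leq\; |f(\zeta)| \;\leq\; (1+\epsilon_r)\,|f'(z)|\,|\zeta-z|, \]
with $\epsilon_r = O(e^{-r}/\dist(z,\partial D))$. Hence $\{\dist(z,\gamma) \leq e^{-r}\}$ is sandwiched between the events $\{\dist(0,\eta) \leq (1 \mp \epsilon_r) e^{-r}/\crad_D(z)\}$. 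Writing $r_\pm = r + \log\crad_D(z) \mp \log(1 \pm \epsilon_r)$, Theorem \ref{greentheorem} gives
\[ \Prob\{\dist(0,\eta) \leq e^{-r_\pm}\} = \hat c\,(\sin\theta)^{4a-1}\,e^{r_\pm(d-2)}\,[1 + O(e^{-\alpha r_\pm})]. \]
Substituting $e^{r_\pm(d-2)} = \crad_D(z)^{d-2}\,e^{r(d-2)}\,(1 \pm \epsilon_r)^{2-d}$ and recognizing $\hat c\,(\sin\theta)^{4a-1}\,\crad_D(z)^{d-2} = G_D(z;w_1,w_2)$ via \eqref{greenfunction}, both bounds read $G_D(z;w_1,w_2)\,e^{r(d-2)}\,[1 + O(\epsilon_r) + O(e^{-\alpha r_\pm})]$.

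The main obstacle is the boundary regime $\dist(z,\partial D) \asymp e^{-r}$. There $\epsilon_r$ is only $O(1)$ and $r_\pm$ is bounded rather than large, so the Koebe and the disk error terms are each $O(1)$ multiplicatively. Since $G_D(z;w_1,w_2)\,e^{r(d-2)}$ is itself of order $1$ in this regime, it is absolute rather than relative accuracy that one needs, and this requires an auxiliary argument, for instance comparing with a Beurling-type hitting estimate controlling how an $SLE$ path approaches a point sitting within $O(e^{-r})$ of $\partial D$, or exploiting the uniformity of Theorem \ref{greentheorem} in $\theta$ together with a careful estimate of how the hitting probability changes under a slight shift of the target radius. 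Once this regime is dispatched and joined to the interior argument above, taking $\alpha$ to be the smaller of the two resulting exponents yields the stated uniform bound.
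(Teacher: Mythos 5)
Your approach is the same as the paper's: the paper's entire ``proof'' of Theorem \ref{Greenother} is the remark that Theorem \ref{greentheorem} extends immediately by conformal invariance, and your first two paragraphs supply exactly the missing details (the conformal map $f:D\to\Disk$ with $f(z)=0$, Koebe distortion to sandwich $\{\dist(z,\gamma)\leq e^{-r}\}$ between events for $\dist(0,f\circ\gamma)$, then Theorem \ref{greentheorem} applied at the shifted radii $r_\pm$). That part is correct and complete.

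Your third paragraph, however, chases a problem that cannot be solved and does not need to be. At the threshold $\dist(z,\p D)\asymp e^{-r}$ no auxiliary argument can rescue the statement with constants uniform in $z$: take $D=\Half$ and $z=2ie^{-r}$; by scale invariance, $\Prob\{\dist(z,\gamma)\leq e^{-r}\}$ is a fixed number independent of $r$, and a uniform $1+O(e^{-\alpha r})$ bound at the threshold would force it to equal the limiting constant $c\,2^{d-2}$ exactly, which there is no reason to expect. The error term must therefore be read in the scaled form $O\left(\left[e^{r}\dist(z,\p D)\right]^{-\alpha}\right)$ --- equivalently, with the implied constant depending on $\dist(z,\p D)$ --- and this is precisely how the paper records the analogous one-point estimate in Proposition \ref{onesided}. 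With that reading your own computation already finishes the proof: the Koebe error $\epsilon_r=O(e^{-r}/\dist(z,\p D))$ and the disk error $O(e^{-\alpha r_\pm})$, where $e^{-r_\pm}\asymp e^{-r}/\crad_D(z)\asymp e^{-r}/\dist(z,\p D)$, are both of exactly this form. So drop the search for a separate boundary argument and simply state the error as $O\left(\left[e^{r}\dist(z,\p D)\right]^{-\alpha}\right)$.
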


Most of our computations will be in the upper half plane
or in the disk.
For notational ease, we will write
\[
G(z) = G_{\mathbb H}(z;0,\infty),\qquad G(z;\theta) = G_{\mathbb D}
\bigl(z;1,e^{2\theta i}\bigr).
\]

If $V \subset{\mathbb H}$, we define
\[
G(V) = \int_V G(z) \,dA(z).
\]
Note that if $\Gamma\in{\mathcal Q}_n$ and $z$ is the center point
of $\Gamma$, then
\[
G(\Gamma) \asymp2^{-2n} G(z).
\]
[If $\Gamma= \Gamma_n(j,0)$, this requires a simple estimate of
an integral.]

\subsection{Two-point estimates}
\label{twopointsec}

A basic principle in proving two-point estimates for $\SLE$ is the
idea that if a path gets very close to a point $z$ and then gets away from~$z$, then it is
unlikely to get even closer to $z$. While
this is the heuristic, as just stated the principle is not
always valid. Since this idea is important in several of our proofs, we
will spend some time to formulate
and prove a precise version. We are expanding
on ideas in \cite{Law3,LR}. Let $\gamma$ be an $\SLE_\kappa$
curve from $0$ to $\infty$ in ${\mathbb H}$.
As before, if $z \in{\mathbb H}$, let
\[
\tau_r(z) = \inf\bigl\{t\dvtx  \bigl|\gamma(t) - z\bigr| = e^{-r} \bigr
\}.
\]

\begin{figure}[b]

\includegraphics{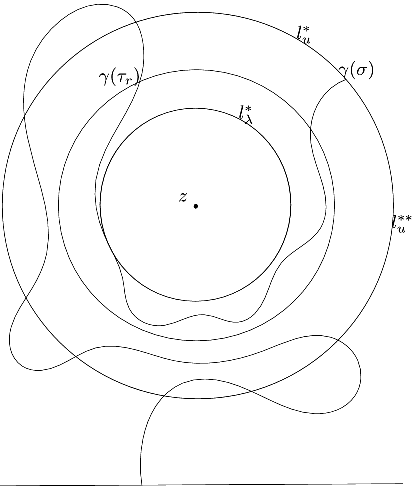}

\caption{Quantities in Section~\protect\ref{twopointsec}.}\label{simplefigure}
\end{figure}

If $\tau_{r}
= \tau_{r}(z) < \infty$,
let
$H =
H_{\tau_{r}}$ denote the unbounded
component of ${\mathbb H}\setminus\gamma_{\tau_{r}}$,
let ${\mathcal B}_u = {\mathcal B}_u(r,z)$ denote the disk
of radius $e^{-ur}$ containing $z$, and let ${\mathcal B}
= {\mathcal B}_1$ denote the disk of radius $e^{-r}$
about $z$.
Let $V_u = V_u(r,z)$
denote the connected component of ${\mathcal B}_u
\cap H$
containing $z$. If $u \leq1$,
the intersection of $\partial V_u $ with $
H $
is a disjoint
union of open arcs in $\partial{\mathcal B}_u$
each of whose endpoints is
in $\partial H$. There is a unique such arc $l$, that we denote
by $l_u = l_{u}(r,z)$, such that $z$ is contained in the bounded
component of $H \setminus l$. Simple connectedness
of $H$ is used to see that this arc is unique. However, one
may note the following facts:
\begin{itemize}
\item The bounded
component of $H \setminus l_{u}$ does not need to be
contained in ${\mathcal B}_u$. Indeed, we have no universal bound on the
diameter of the bounded component.
\item There may be other subarcs $l$ of $\partial{\mathcal B}_u \cap H$
such that $z$ is in the bounded component of $H \setminus l$.
However, these arcs are not on $\partial V_u$.
\end{itemize}
For $0 < u \leq1$, let
\[
\sigma=\sigma_u(r,z) = \inf\bigl\{t \geq\tau_{r}\dvtx
\gamma(t) \in\overline{l_{u}} \bigr\}.
\]
%
Then a correct, although still imprecise, version of our heuristic principle
is: if $\tau_{r} < \infty$, then
after time $\sigma$ the path is unlikely to get closer
to $z$. We will now be more precise.
Note that for fixed $z,u,r$, with probability one
$\gamma(\sigma) \in l_u$. In this case (which we now assume),
$l_u \setminus\{\gamma(\sigma)\}$ consists of two crosscuts of
$H_\sigma$ that we denote by $l^*_u$ and $l^{**}_u$. If $z
\in H_\sigma$, which is always true if $\kappa\leq4$, we
let $l^*_u$ be the crosscut such that $z$ lies in the bounded
component of $H_\sigma\setminus l^*_u$.
If $\tau_{r} < \infty$, define $\lambda= \lambda(r,z,u) \geq1 $ by
\[
\dist[z,\gamma_{\sigma}] = e^{-\lambda r}.
\]
Let $l_{\lambda}^*$ denote the connected subarc of
$\partial{\mathcal B}_\lambda\cap H_\sigma$ that separates $z$ from
infinity. (If the intersection of $\gamma_{\sigma}$ with
${\mathcal B}_\lambda$ is a single point, which we expect to be the
case with probability one, then $l_{\lambda}^*$ is a circle with a
single point
deleted.) See Figure~\ref{simplefigure} for a figure showing
illustrating these quantities.

Since we will use it in several proofs, we recall that if $D$ is
a domain and $\eta^1, \eta^2$ are disjoint subarcs
of $\partial D$, then the (Brownian) \textit{excursion measure} between
$\eta^1,\eta^2$ is given by
\[
{\mathcal E}_D\bigl(\eta^1,\eta^2\bigr) =
\int_{\eta^1} \partial_n \phi(z) |dz|,
\]
where $\partial_n$ denotes the inward normal derivative
and $\phi= \phi_{D,\eta^2}$ is the harmonic function on $D$ with
boundary value $1_{\eta^2}$. The above expression
assumes that $\eta^1$ is smooth, but one can check that
${\mathcal E}_D(\eta^1,\eta^2)$
is a conformal invariant and hence can be
defined for all domains. Also,
$ {\mathcal E}_D(\eta^1,\eta^2) = {\mathcal E}_D(\eta^2,\eta^1) $.
See \cite{Law1}, Chapter~5, for more details. When
estimating excursion measures, we will use the
following estimate that follows from the strong Markov
property.
Suppose $\eta$ is a crosscut of~$D$ that separates $\eta^1$
from $\eta^2$. Then
%
\begin{equation}
\label{newbie} {\mathcal E}_{D}\bigl(\eta^1,
\eta^2\bigr) \leq{\mathcal E}_{D \setminus\eta} \bigl(
\eta^1,\eta\bigr) \sup_{z \in\eta} \phi(z).
\end{equation}
If $D$ is simply connected, so that $(D,\eta^1,\eta^2)$ is
a conformal rectangle, and $ {\mathcal E}_D(\eta^1,\eta^2) \leq1$,
then
%
\begin{equation}
\label{newbie2} {\mathcal E}_D\bigl(\eta^1,
\eta^2\bigr) \asymp\max_{z \in D}
\phi_1(z) \phi_2(z),
\end{equation}
where $\phi_j = \phi_{D,\eta^j}$.
One can check this by verifying it for a rectangle
$[0,L] + i[0,\pi]$ by direct computation and using
conformal invariance.

\begin{lemma} \label{inpart}
There exists $c$ such that
for all $0 < u \leq1$,
\[
{\mathbb P}\bigl\{\dist[z,\gamma_\infty] < \dist[z,
\gamma_\sigma] \mid\gamma_\sigma\bigr\} \leq c
e^{\alpha(u-\lambda) r},
\]
where $\alpha= (4a-1)/2 >0$.
In particular,
%
\begin{equation}
\label{inparticular} {\mathbb P}\bigl\{\dist[z,\gamma_\infty] < \dist[z,
\gamma_\sigma] \mid\gamma_\sigma\bigr\} \leq c
e^{\alpha(u-1) r}.
\end{equation}
\end{lemma}

\begin{pf}
Let $g\dvtx H_\sigma\rightarrow{\mathbb H}$ be a conformal
transformation with $g(\gamma(\sigma)) = 0$, $g(\infty)
= \infty$.
The image $\eta= g \circ l^*_{u}$ is a crosscut of ${\mathbb H}$
with one endpoint on the origin and one on the real line
which without loss of generality we will assume is
on the positive real line. The
curve $\eta' = g \circ l^*_{\lambda}$ is a crosscut of ${\mathbb H}$ contained
in the bounded component of ${\mathbb H}\setminus\eta$
with positive endpoints $x_1 \leq x_2$.
Let us consider the conformal rectangle given by the component
of ${\mathbb H}\setminus(\eta\cup\eta')$ that contains
both $\eta$ and $\eta'$ on its boundary and with $\eta, \eta'$
as two of the boundary arcs of the rectangle. The excursion
measure
between $\eta$ and $\eta'$ in this rectangle is the
same as the excursion measure between $l^*_{u}$
and $l_{\lambda}^*$ for the corresponding rectangle in
$H_\sigma\setminus(l^*_{u} \cup
l_{\lambda}^*)$. The Beurling estimate (see, e.g., \cite{Law1}, Theorem 3.76)
implies that the latter is bounded
above by $c e^{-(\lambda- u)r/2}$. Since $\eta$ separates $\eta'$ from
the negative real line, we see that the excursion measure between
$\eta'$ and $(-\infty,0]$ in the unbounded component of
${\mathbb H}\setminus\eta'$ is bounded above by $
c e^{-(\lambda- u)r/2}$. By
standard estimates of the Poisson kernel in~${\mathbb H}$,
this shows that $\diam(\eta') \leq
c e^{-(\lambda- u)r/2} x_1$, and hence by (\ref{boundary}), the
probability that an $\SLE$ path hits it is $O(e^{-(\lambda- u)(4a-1)r/2})$.
\end{pf}

The next lemma strengthens (\ref{inparticular})
for $\kappa\leq4$. We do not know
if it is true for $4 < \kappa< 8$. Let ${\mathcal B}= {\mathcal B}_1$
denote the disk of radius $e^{-r}$ about $z$.

\begin{lemma} \label{inpart2}
If $\kappa\leq4$, there exists $c$ such that
if $0 < u \leq1$,
%
\begin{equation}
\label{inparticular2} {\mathbb P}\bigl\{ \gamma[\sigma,\infty) \cap
{\mathcal B} \neq
\varnothing\mid\gamma_\sigma\bigr\} \leq c e^{\alpha
(u-1)r
},
\end{equation}
where $\alpha= (4a-1)/2 >0$.
\end{lemma}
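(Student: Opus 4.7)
The plan is to imitate the proof of Lemma \ref{inpart}, replacing the crosscut $l^*_\lambda$ by $l^*_1$ (which lives on $\p \ball$, where the target $\ball$ sits). First I would let $g : H_\sigma \to \Half$ be the conformal transformation with $g(\gamma(\sigma)) = 0$ and $g(\infty) = \infty$, and set $\eta = g \circ l^*_u$ and $\eta' = g \circ l^*_1$. As in the proof of Lemma \ref{inpart}, WLOG $\eta$ runs from $0$ to some $y_0 > 0$ on the positive real axis, and $\eta'$ sits in the bounded component of $\Half \setminus \eta$ with real endpoints $0 < x_1 \leq x_2 \leq y_0$.

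Next I would run the same Beurling argument. Since $l^*_u$ and $l^*_1$ are (sub)arcs of concentric circles of radii $e^{-ur}$ and $e^{-r}$ around $z$, the annulus between them has modulus at least $(1-u)r/(2\pi)$, and hence the excursion measure between $l^*_u$ and $l^*_1$ in the appropriate subdomain of $H_\sigma$ is bounded by $c \, e^{-(1-u)r/2}$. Pulling this conformal invariant back via $g$ gives the same bound between $\eta$ and $\eta'$ in $\Half$, and standard estimates (as quoted at the end of the proof of Lemma \ref{inpart}) then yield $\diam(\eta') \leq c \, e^{-(1-u)r/2} \, x_1$.

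By conformal invariance, $\gamma^{new} := g \circ \gamma[\sigma, \infty)$ is an $SLE_\kappa$ from $0$ to $\infty$ in $\Half$. Applying \eqref{boundary} to points near $x_1$, the probability that $\gamma^{new}$ gets within distance $\diam(\eta')$ of $[x_1, x_2]$ is $O\!\left((e^{-(1-u)r/2})^{4a-1}\right) = O(e^{\alpha(u-1)r})$, which is the claimed bound.

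The hard part will be to justify that the event $\{\gamma[\sigma, \infty) \cap \ball \neq \eset\}$ actually entails $\gamma^{new}$ hitting a neighborhood of $\eta'$. This is where I would use $\kappa \leq 4$: for simple SLE, the continuation $\gamma^{new}$ avoids $\p H_\sigma$ except at the starting point, and the component $V_1$ of $\ball \cap H_\sigma$ containing $z$ is cut off from $\gamma(\sigma)$ by the crosscut $l^*_1$, so to reach $V_1$ the path $\gamma^{new}$ must cross $\eta'$. If $\gamma_\sigma$ re-enters $\ball$, producing further components of $\ball \cap H_\sigma$, each is cut off by additional arcs of $\p \ball \cap H_\sigma$ whose images under $g$ satisfy the same Beurling diameter bound, and the contributions are controlled in the same way. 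For $\kappa > 4$ the path is non-simple, the boundary structure of $H_\sigma$ is more intricate (involving self-intersection and boundary hits), and this crosscut-based argument breaks down, which is presumably why the lemma is stated only for $\kappa \leq 4$.
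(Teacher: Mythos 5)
You have correctly located the hard part of the proof, but your resolution of it is exactly where the gap lies. The event $\{\gamma[\sigma,\infty)\cap\ball\neq\eset\}$ forces the future path to first hit one of the arcs $L_1,L_2,\dots$ of $\p\ball$ lying on the boundary of the unbounded component of $H_\sigma\setminus\overline\ball$, and there may be countably many of these (even for a simple curve, $\gamma_\sigma$ can re-enter $\ball$ arbitrarily often). Your plan is to bound the hitting probability of each such arc by the Lemma~\ref{inpart}-type Beurling argument, which yields a bound of order $e^{\alpha(u-1)r}$ \emph{per arc}, and then to say the remaining contributions ``are controlled in the same way.'' But a uniform per-arc bound summed over countably many arcs gives nothing. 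The paper's proof closes this by proving a refined per-arc estimate: the excursion measure between $l_u^*$ and $L_j$ is $O(h_j\,e^{(u-1)r/2})$, where $h_j$ is the angular length of $L_j$ --- the extra factor $h_j$ coming from a Poisson-kernel estimate in $\C\setminus\overline\ball$ --- so that
$\Prob\{\gamma[\sigma,\infty)\cap L_j\neq\eset\mid\gamma_\sigma\}\leq c\,e^{\alpha(u-1)r}\,h_j^{4a-1}$.
The sum over $j$ is then finite because $\sum_j h_j\leq 2\pi$ and, when $\kappa\leq4$, one has $4a-1\geq1$, whence $\sum_j h_j^{4a-1}\leq\bigl(\sum_j h_j\bigr)^{4a-1}\leq(2\pi)^{4a-1}$.

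This also means you have misidentified where the hypothesis $\kappa\leq4$ enters. It is not needed for the crosscut topology or because the curve is simple: the paper explicitly notes that the decomposition into arcs and the per-arc bound hold for all $\kappa<8$ (a separate, weaker lemma is then proved for $4<\kappa<8$ precisely because the summation fails). The hypothesis $\kappa\leq4$ is used only at the last step, to guarantee superadditivity of $x\mapsto x^{4a-1}$ so that $\sum_j h_j^{4a-1}$ is bounded by a constant. Without the $h_j$-dependent refinement and this final observation, your argument does not close even in the regime where you invoke simplicity of the path.
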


\begin{pf}
Let $V$ denote the unbounded component of
$H_\sigma\setminus\overline{\mathcal B}$ and note that
$l_{u}^*, l_u^{**} \subset V$. Let $L = \partial V \cap H_\sigma\cap
\partial
{\mathcal B}$ which is
a disjoint (finite or countable) union of open subarcs of $\partial
{\mathcal B}$,
which we denote by
$L_1,L_2,\ldots.$
For each arc $L_j$, either $l_u^*$ or $l_u^{**}$ disconnects
$L_j$ from infinity in $H_\sigma$, that is, $L_j$ is in
the bounded component of $H_\sigma\setminus l_u^*$ or
the bounded component of $H_\sigma\setminus l_u^{**}$.
Write $ L = L^1 \cup L^2$ where $L^1,L^2$ are
the unions of $L_j$ over the subarcs
of the first and second type, respectively.
The probability on the left-hand side
of (\ref{inparticular2}) is the probability that $\gamma[\sigma,
\infty) \cap L \neq\varnothing$. Hence, it suffices to show
that
\[
\sum_{j=1}^\infty{\mathbb P}\bigl\{ \gamma[
\sigma,\infty) \cap L_j \neq\varnothing\mid\gamma_\sigma
\bigr\} \leq c e^{\alpha
(u-1)r
}.
\]
We will give this bound for the sum over $L_j$ of
the first type; the sum over the second type is done
similarly.

Let ${{\mathcal R}}$ denote the bounded component
of $H_\sigma\setminus l_u^*$ which includes the $L_j$
of the first type. Using\vspace*{2pt} the Beurling estimate,
we can see that the excursion measure between $l_u^*$
and $L^1$ in $ {{\mathcal R}} \setminus L^1$,
$ {\mathcal E}_{{\mathcal R} \setminus L^1} (l_u^*, L^1)$, is
$O(e^{(u-1)r/2})$. We claim that a stronger fact is true,
%
\begin{equation}
\label{mar51} \sum_{j} {\mathcal
E}_{{\mathcal R} \setminus L_j}\bigl( l_u^*, L_j\bigr) \leq c
e^{(u-1)r/2},
\end{equation}
where we are summing over $L_j$ of the first type. Indeed,
$ {\mathcal E}_{{\mathcal R} \setminus L^1} (l_u^*, L^1)$
is the (integral over $l_u^*$ of the normal derivative of the)
probability that a Brownian motion starting at $z$ hits
$L^1$ before leaving $ {\mathcal R}$. The sum on the
left-hand side of (\ref{mar51}) is the (integral over $\ldots$ of the)
\textit{expected number}
of crosscuts $L^j$ visited before leaving~${\mathcal R}$.
However, using the strong Markov property and simple
connectedness, we can see that the probability starting
on one of the crosscuts $L_j$ of reaching another before leaving
${\mathcal R}$ is at most $1/2$, and hence the
expected number of crosscuts hit given one is hit is
at most $2$.

As in the previous proof, we use (\ref{boundary})
to see that
\[
\sum_l {\mathbb P}\bigl\{\gamma[\sigma,\infty)
\cap L_j \neq\varnothing\mid\gamma_\sigma\bigr\} \leq c \sum
_l {\mathcal E}_{{\mathcal R} \setminus L_j}\bigl(
l_u^*, L_j\bigr)^{4a-1}.
\]
The argument up to this point has not used the fact that
$\kappa\leq4$. However, if $\kappa\leq4$, we know
that $4a-1 \geq1$, and hence (\ref{mar51}) gives
\[
\sum_l {\mathcal E}_{{\mathcal R} \setminus L_j}\bigl(
l_u^*, L_j\bigr)^{4a-1} \leq\biggl[ \sum
_l {\mathcal E}_{{\mathcal R}
\setminus L_j}\bigl(
l_u^*, L_j\bigr) \biggr]^{4a-1} \leq c
e^{\alpha(u-1)r}.
\]\upqed
\end{pf}

While we do not know if the last lemma holds for $\kappa> 4$,
the next lemma will suffice for our needs.

\begin{lemma}
If $4 < \kappa< 8$, there exist $c<\infty, \beta> 0$ such that
if $\tau_{r} < \infty$ and $0 <
u \leq1$, then
\[
{\mathbb P}\{ {\mathcal B}\cap H_\sigma\neq\varnothing\mid
\gamma_{\tau_{r}} \} \leq c e^{\beta
(u-1)r}.
\]
\end{lemma}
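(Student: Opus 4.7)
The plan is to use conformal invariance to reduce to a standard $SLE_\k$ from $0$ in $\Half$ and then apply a swallowing rate estimate valid for $\k > 4$. Let $g : H_{\tau_r} \to \Half$ be the conformal map with $g(\gamma(\tau_r))=0$, $g(\infty)=\infty$, and $g(z)=i$; by the domain Markov property, $\tilde\gamma := g \circ \gamma[\tau_r,\infty)$ is a standard $SLE_\k$ from $0$ in $\Half$. The Koebe $1/4$-theorem together with $\dist(z,\p H_{\tau_r}) \asymp e^{-r}$ yields $|g'(z)| \asymp e^r$, so $\tilde\ball := g(\ball \cap H_{\tau_r})$ is a Jordan domain in $\Half$ of diameter $\asymp 1$ touching $\R$ only at $0$. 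Since the hyperbolic distance in $V_u$ from $z$ to $l_u$ is $\asymp (1-u)r$ (using $\dist(z, \p V_u \setminus l_u) \asymp e^{-r}$ and $\dist(z, l_u) \asymp e^{-ur}$), conformal invariance of hyperbolic distance and standard distortion give $\dist(0, \tilde l_u) \geq c\, e^{\delta(1-u)r}$ for some $\delta = \delta(\k) > 0$, where $\tilde l_u := g(l_u)$.

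Writing $\tilde\sigma$ for the hitting time of $\tilde l_u$ by $\tilde\gamma$, the event $\ball \cap H_\sigma \ne \eset$ corresponds to $\tilde\ball \not\subset K_{\tilde\sigma}$, where $K_{\tilde\sigma}$ is the hull of $\tilde\gamma[0, \tilde\sigma]$. The half-plane capacity bound $\hcap(K_{\tilde\sigma}) = 2a\tilde\sigma \leq C\,\diam(K_{\tilde\sigma})^2$ together with $\diam(K_{\tilde\sigma}) \geq \dist(0, \tilde l_u)$ gives the deterministic lower bound $\tilde\sigma \geq c\, e^{2\delta(1-u)r}$. It therefore suffices to prove the following SLE estimate: for $\k > 4$ and any Jordan domain $D \subset \Half$ of diameter $\asymp 1$ with $0 \in \p D$ and $D$ touching $\R$ only at $0$, $\Prob\{D \not\subset K_t\} \leq c\,t^{-\beta_0}$ for some $\beta_0 = \beta_0(\k) > 0$. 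The plan is to prove this via two ingredients: (a) a single-point interior swallowing estimate $\Prob\{T_w > t\} \leq c\,t^{-\a/2}$ for any interior $w \in \Half$ with $|w|, \Im w \asymp 1$ (where $T_w := \inf\{t: w \in K_t\}$), obtained by analyzing the complex-valued SDE $dW_t = a W_t^{-1} dt - dU_t$ for $W_t := g_t(w) - U_t$ (whose imaginary part is monotone decreasing via $\p_t \Im W_t = -a \Im W_t / |W_t|^2$) combined with SLE scaling $\Prob\{T_w > t\} = \Prob\{T_{w/\sqrt{t}} > 1\}$ and the small-$|w|$ asymptotics extractable from the two-sided radial $SLE_\k$ analysis of Section \ref{onepointsec}; and (b) a covering/topological argument reducing the region-level event to a finite union of such single-point events, exploiting the uniform Jordan-domain geometry of $D$. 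Combining everything gives
\[\Prob\{\ball \cap H_\sigma \ne \eset \mid \gamma_{\tau_r}\} \leq c\,\tilde\sigma^{-\beta_0} \leq c'\, e^{\beta(u-1)r}\]
with $\beta = 2\delta\beta_0 > 0$.

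The hardest step will be (a), the single-point interior swallowing bound. Unlike the boundary-point case, where the real Bessel process of dimension $1 + 4/\k$ yields a clean tail estimate, the interior case requires handling the coupled radial-angular dynamics of the complex process $W_t$, or equivalently a careful use of the two-sided radial $SLE$ description of paths conditioned to pass near $w$. A secondary technical point is (b), the topological reduction from $\{\tilde\ball \not\subset K_{\tilde\sigma}\}$ to finitely many single-point survival events, which requires exploiting the uniform inner geometry of $\tilde\ball$ inherited from $\ball$ via the conformal map.
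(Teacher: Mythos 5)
Your overall architecture is the same as the paper's: map $H_{\tau_r}$ conformally to $\Half$ with $\gamma(\tau_r)\mapsto 0$, show that the image of $l_u$ is separated from the (unit-normalized) image of $\p \ball$ by a factor of order $e^{(1-u)r/2}$, and then invoke a power-law estimate for the failure of $SLE_\kappa$, $\kappa>4$, to swallow a unit-scale region before traveling that far. However, two of your intermediate steps fail as stated. The deterministic bound $\tilde\sigma\geq c\,e^{2\delta(1-u)r}$ does not follow from $\hcap(K_{\tilde\sigma})\leq C\,\diam(K_{\tilde\sigma})^2$ together with $\diam(K_{\tilde\sigma})\geq \dist(0,\tilde l_u)$: both inequalities bound the capacity from \emph{above}, and a connected hull of diameter $D$ attached to $\R$ can have arbitrarily small half-plane capacity (a thin rectangle $[0,D]\times[0,\e]$), so large diameter does not force large capacity time. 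The paper avoids this by keeping the final estimate spatial: it bounds the conditional probability by the probability that the path fails to separate the unit circle from infinity before reaching the circle of radius $c'e^{(1-u)r/2}$, and obtains power-law decay by iterating over dyadic scales, using scale invariance and the fact that $SLE_\kappa$ with $\kappa>4$ swallows macroscopic regions with uniformly positive probability at each scale. (A probabilistic salvage of your time-based version is possible, since $\Prob\{\diam(K_t)\geq\lambda\sqrt t\}$ is small, but that is an additional estimate you would have to supply.)

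Second, your reduction (b) of $\{\tilde\ball\not\subset K_{\tilde\sigma}\}$ to finitely many single-point survival events does not work: the unswallowed set $\tilde\ball\cap H_{\tilde\sigma}$ can be an arbitrarily thin sliver hugging $\p\tilde\ball$ and so need not contain any point of a fixed finite net. The correct move is the paper's: bound by the event that the \emph{entire} unit disk has not yet been separated from $\infty$, a single topological event to which the scale-iteration applies directly, so no single-point swallowing exponent is needed at all. A smaller gap of the same kind occurs in your separation step: large hyperbolic distance from $i$ to $\tilde l_u$ does not by itself give $\dist(0,\tilde l_u)\geq c\,e^{\delta(1-u)r}$, since a crosscut close to $\R$ can be hyperbolically far from $i$ yet Euclidean-close to $0$. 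One must use that $\tilde l_u$ is a crosscut whose bounded complementary component contains the unit-scale image of $\p\ball$; the paper combines the Beurling/excursion-measure upper bound with a gambler's-ruin lower bound for hitting such a crosscut to extract the Euclidean separation.
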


\begin{pf} Let $\zeta= \gamma(\tau_{r})$.
Let $g$ be a conformal transformation of $H_{\tau_r}$
onto ${\mathbb H}$ with $g( \zeta) = 0, g(\infty) = \infty$.
Let $\eta= g \circ l_{u}, \eta' = g \circ[\partial{\mathcal B}
\setminus\{\zeta\}] $. Then
$\eta$ is a crosscut of ${\mathbb H}$ with one endpoint positive
and one endpoint negative, and $\eta'$ is a simple loop
rooted at the origin
lying in the bounded
component of ${\mathbb H}\setminus\eta$. By choosing a
multiple of $g$ if necessary, we may assume that
$\max\{|w|\dvtx  w \in\eta'\} = 1$.\vspace*{1pt}

We claim that there exists $c'$ such that
$\dist(0,\eta) \geq c' e^{(1-u)r/2}$.
To see this, let ${\mathcal R}$ denote the component of $H_\sigma
\setminus(\partial{\mathcal B}\cup l_u)$ whose boundary contains
both $\partial{\mathcal B} $ and $l_u$. Then using\vspace*{1pt}
the Beurling estimate
as in the previous lemma, we see that ${\mathcal E}_{{\mathcal R}}
(\partial{\mathcal B},l_u) \leq c e^{-(u-1)/2}$.
Therefore,
\[
{\mathcal E}_{{g(\mathcal R)}}\bigl(\eta,\eta'\bigr) = O\bigl(
e^{-(u-1)r/2}\bigr).
\]
But $\eta'$ is a connected set containing the origin of
radius $1$. If $v = \dist(0,\eta)$, then by setting
$z = i \sqrt v$ in (\ref{newbie2}) we get the bound
\[
{\mathcal E}_{{g(\mathcal R)}}\bigl(\eta,\eta'\bigr) \geq c
v^2.
\]

%

By conformal invariance,
$ {\mathbb P}\{ {\mathcal B}\cap H_\sigma\neq\varnothing\mid
\gamma_{\tau_{r}} \} $ is bounded above by the
probability that an $\SLE_\kappa$ path starting at
the origin has not separated the unit circle from
infinity before it reaches the circle of radius
$c' e^{(1-u)r/2}$. Using scaling and the fact that
$\SLE_\kappa$ has double points, it is not hard to show
that this is $O(e^{\beta(u-1) r})$ for some $\beta$.
\end{pf}

\begin{corollary} \label{hatrack}
If $\kappa< 8$, there exists $c<\infty$ and $\beta>0$ such that if
$|z| > e^{-r/2}$ and $0 < u \leq1$, then if $\tau_{r} < \infty$,
\begin{eqnarray*}
{\mathbb P}\bigl\{ \gamma[\sigma, \infty) \cap\overline{\mathcal B}\neq
\varnothing
\mid\gamma_{\tau_{r}} \bigr\} &\leq& c e^{\beta(u-1) r},
\\
{\mathbb P} \bigl\{\tau_r < \infty, \gamma[\sigma, \infty) \cap
\overline{\mathcal B}\neq\varnothing\bigr\} &\leq& c G(z) e^{(d-2)r}
e^{\beta(u-1) r}.
\end{eqnarray*}
\end{corollary}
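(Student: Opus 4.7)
\emph{Setup and first inequality.} The plan is to derive the first inequality by invoking the two lemmas preceding the corollary (splitting into the cases $\kappa \le 4$ and $4 < \kappa < 8$), then to bootstrap the second inequality using a one-point upper bound on $\Prob\{\tau_r(z) < \infty\}$. Both $\tau_r$ and $\sigma$ are stopping times for the filtration $\mathcal{F}_t = \sigma(\gamma_s : s \le t)$, so tower-property manipulations are legal. For $\kappa \le 4$, Lemma \ref{inpart2} already gives
\[ \Prob\{\gamma[\sigma,\infty) \cap \ball \neq \emptyset \mid \gamma_\sigma\} \le c\, e^{\alpha(u-1)r}, \qquad \alpha = (4a-1)/2, \]
and taking the conditional expectation with respect to $\gamma_{\tau_r}$ (using $\gamma_{\tau_r} \subset \gamma_\sigma$) preserves the bound. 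For $4 < \kappa < 8$, the preceding (unnamed) lemma gives $\Prob\{\ball \cap H_\sigma \neq \emptyset \mid \gamma_{\tau_r}\} \le c\, e^{\beta(u-1)r}$, and the non-crossing property $\gamma[\sigma,\infty) \subset \overline{H_\sigma}$ implies that on the complementary event $\{H_\sigma \cap \ball = \emptyset\}$ the ball has been cut off from infinity, so $\gamma[\sigma,\infty)$ cannot re-enter it. Replacing the open ball by its closure $\overline \ball$ is done by applying each lemma to the slightly larger open ball of radius $e^{-(1-\epsilon)r}$ (which contains $\overline \ball$) and absorbing the resulting factor $e^{\epsilon \beta r}$ by passing from $\beta$ to $\beta(1-\epsilon)$; taking $\beta$ to be the minimum of the two exponents yields the first inequality uniformly for $\kappa < 8$.

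\emph{Second inequality.} For this I need the one-point upper bound
\[ \Prob\{\tau_r(z) < \infty\} \;=\; \Prob\{\dist(z,\gamma) \le e^{-r}\} \;\le\; c\, G(z)\, e^{(d-2)r} \quad \text{for } |z| > e^{-r/2}. \]
When $\Im(z) \ge 2e^{-r}$, this is immediate from Theorem \ref{Greenother}. When $0 < \Im(z) < 2e^{-r}$, the hypothesis $|z| > e^{-r/2}$ forces $|\Re z| \asymp |z| \gg e^{-r}$, so the disk of radius $e^{-r}$ about $z$ lies within a bounded multiple of $e^{-r}$ of the real point $\Re z$; the boundary estimate \eqref{boundary} then gives $\Prob\{\dist(z,\gamma) \le e^{-r}\} \le c\,[e^{-r}/|z|]^{4a-1}$, which matches $G(z) e^{(d-2)r} \asymp [\Im z]^{d-2}[\Im z/|z|]^{4a-1} e^{(d-2)r}$ after using $\Im z \le 2e^{-r}$ in the $[\Im z]^{d-2+4a-1}$ factor (with $d-2+4a-1 \ge 0$ for $\kappa < 8$). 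Granted this bound, the second inequality follows from the first by the tower property:
\[ \Prob\{\tau_r < \infty,\, \gamma[\sigma,\infty) \cap \overline \ball \neq \emptyset\} = \E\bigl[\mathbf{1}\{\tau_r < \infty\}\, \Prob\{\gamma[\sigma,\infty) \cap \overline \ball \neq \emptyset \mid \gamma_{\tau_r}\}\bigr] \le c\, e^{\beta(u-1)r}\, \Prob\{\tau_r < \infty\} \le c\, G(z)\, e^{(d-2)r}\, e^{\beta(u-1)r}. \]

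\emph{Main obstacle.} The subtle point is the topological reconciliation for $4 < \kappa < 8$: the preceding lemma controls the geometric event $\{H_\sigma \cap \ball \ne \emptyset\}$, whereas the corollary controls the dynamical event $\{\gamma[\sigma,\infty) \cap \overline \ball \ne \emptyset\}$. These can differ because for $\kappa > 4$ the curve may touch (but not cross) its past, so $\gamma[\sigma,\infty)$ could in principle graze $\overline \ball$ at a double point lying on $\p H_\sigma$ even when $\ball$ has been separated from infinity. The small-enlargement trick outlined above resolves this at the cost of a strictly smaller but still positive $\beta$; verifying that the loss is only $e^{\epsilon \beta r}$ and not worse is the routine but most delicate bookkeeping. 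The one-point bound for $z$ close to the real axis is a minor secondary issue, dispatched by the boundary estimate \eqref{boundary} as above.
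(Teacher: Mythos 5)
Your overall route is the one the paper intends: the corollary is stated without proof precisely because the first display is meant to follow from Lemma \ref{inpart2} (for $\kappa\le 4$) and the unnamed lemma preceding the corollary (for $4<\kappa<8$) via the tower property, and the second display is meant to be the first multiplied by a one-point bound on $\Prob\{\tau_r(z)<\infty\}$. Your handling of the open-versus-closed ball and of the geometric event $\{H_\sigma\cap\ball\ne\emptyset\}$ versus the dynamical event $\{\gamma[\sigma,\infty)\cap\overline\ball\ne\emptyset\}$ is a correct reading of what needs to be reconciled, and the non-crossing property plus a small enlargement does it.

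There is, however, a genuine gap in your treatment of the one-point bound when $\Im(z)<2e^{-r}$. You need the inequality $[e^{-r}/|z|]^{4a-1}\le c\,[\Im z]^{\,d+4a-3}\,|z|^{1-4a}\,e^{(d-2)r}$, and you propose to get it ``after using $\Im z\le 2e^{-r}$ in the $[\Im z]^{d-2+4a-1}$ factor.'' Since $d+4a-3=(4a-1)^2/(4a)>0$, substituting the upper bound $\Im z\le 2e^{-r}$ into that factor makes the right-hand side \emph{smaller}; the estimate goes the wrong way. In fact the claimed bound $\Prob\{\tau_r(z)<\infty\}\le c\,G(z)\,e^{(d-2)r}$ is false for $z$ with $\Im z$ super-exponentially small: for $z=1+i\delta$ with $\delta\ll e^{-r}$ one has $\Prob\{\dist(z,\gamma)\le e^{-r}\}\ge \Prob\{\dist(1,\gamma)\le e^{-r}/2\}\asymp e^{-r(4a-1)}$, which stays bounded below while $G(z)e^{(d-2)r}\asymp \delta^{(4a-1)^2/(4a)}e^{r(d-2)}\to 0$ as $\delta\to 0$. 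A correct accounting shows the one-point bound holds only when $\Im z\gtrsim e^{-r(4a+1)/(4a-1)}$. The resolution is not to patch the algebra but to observe that the corollary is only ever invoked for $z$ in the bulk (in the paper, for squares in $\{\Im(z)\ge 1\}$), where $\dist(z,\p\Half)\ge 2e^{-r}$ and Theorem \ref{Greenother} gives $\Prob\{\tau_r(z)<\infty\}\le c\,G(z)\,e^{(d-2)r}$ directly; the hypothesis $|z|>e^{-r/2}$ alone does not suffice for the second display as literally stated.
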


The other estimates we need will deal with upper
bounds for the probabilities that the $\SLE$ curves
gets close to two different points $z,w$. For
the remainder of this section, we assume that $\gamma$
is an $\SLE$ curve from $0$ to $\infty$ in ${\mathbb H}$. If
$z \in\overline{\mathbb H}$, we let
\[
\tau_r(z) = \inf\bigl\{t\dvtx  \bigl|\gamma(t) - z\bigr| \leq e^{-r}
\bigr\}.
\]

\begin{lemma} There exists $ c < \infty$ such that
if
$|z|,|w| \geq e^{-u}$ and
$|z-w| \geq e^{-u}$, then for $0 < s < r$,
%
\begin{eqnarray}
\label{aug2050}
{\mathbb P}\bigl\{\tau_{s+u}(w) < \infty,
\tau_{r+u}(z) < \infty\bigr\} &\leq& c e^{(s+r)(d-2)},
\\
\label{aug205} {\mathbb P}\bigl\{\tau_{s+u}(w) < \tau_{r+u}(z)
< \tau_{r+u}(w) < \infty\bigr\} &\leq& c e^{2r(d-2)}
e^{-\alpha
s},
\end{eqnarray}
where $\alpha= (4a-1)/2$.
\end{lemma}

\begin{pf}
By scaling, it suffices to prove
the lemma for $u=0$. The first
estimate is Theorem 2 in \cite{LW}.
The second estimate follows from
the ideas in \cite{LW}, Lemma~4.10, but
we will redo the proof using some
ideas from this section.
Throughout this proof, we let $r,s,n$ be integers.

Let $\gamma= \gamma_{\tau_r(z)}$
and let $A = A_{s,r} $ denote the
$\gamma$-measurable
event
\[
A = \bigl\{ \tau_s(w) \leq\tau_r(z) <
\tau_{s+1}(w) \bigr\}.
\]
Let $n \geq s+1$ and let $E = E_{s,r,n} $ be the event
\[
E = \bigl\{ \tau_s(w) \leq\tau_r(z) \leq
\tau_n(w) < \tau_{r+1}(z) < \infty\bigr\}.
\]
The hard work is to show that on the event $A$,
%
\begin{equation}
\label{pat1} {\mathbb P}(E \mid\gamma) \leq c e^{-\alpha(r+s) }
e^{(d-2)(n-s)}.
\end{equation}
The estimate (\ref{aug2050}) shows that ${\mathbb P}(A_{s,r}) \leq O(e^{
(d-2) (r+s)})$ and the one-point estimate~(\ref{interior})
shows that
${\mathbb P}\{ \tau_n(z) < \infty\mid A \cap E\}
\leq O(e^{ (d-2)(n-r)})$. Hence, once
we establish (\ref{pat1}) we have
\[
{\mathbb P}\bigl\{ \tau_s(w) \leq\tau_r(z) \leq
\tau_n(w) < \tau_{r+1}(z) \leq\tau_{n}(z) <
\infty\bigr\} \leq c e^{-\alpha(r+s)} e^{2(d-2)n}.
\]
If we sum over $s$, we get
\[
{\mathbb P}\bigl\{ \tau_{r'}(z) \leq\tau_n(w) <
\tau_{r'+1}(z) \leq\tau_{n}(z) < \infty\bigr\} \leq c
e^{-\alpha r'} e^{2(d-2)n},
\]
and if we sum this over $r' \geq r$ we get
(\ref{aug205}).
We will prove (\ref{pat1}).
If $r+s \leq4$, we can estimate
\[
{\mathbb P}(E \mid\gamma) \leq{\mathbb P}\bigl\{\tau_n(w) < \infty
\mid\gamma\bigr\},
\]
and use the one-point estimate; hence we may assume that $r +s \geq4$.
We let
$s,r$ with $s + r \geq4$
and let $\tau= \tau_r(w)$.

Let $U^z$ (resp., $U^w$) denote the disk of radius $e^{-r/2}$
[$e^{-s/2}$] centered at $z$ [$w$]. Note that $U^z \cap U^w = \varnothing$.
For each $t \geq\tau$,
and $\zeta\in\{z,w\}$,
let $V_t^\zeta$ denote the connected component of
$H_t \cap U^\zeta$ that contains $\zeta$.
Let $\eta_t^\zeta$ denote the unique crosscut of
$H_t$ that is contained in $\partial V_t^\zeta\cap\partial U^\zeta$
and separates $z$ from $w$ in $H_t$. Let $l_t^\zeta$
denote the unique crosscut of $H_t$ contained in the circle of radius
$\dist(\zeta, \partial H_t)$ about $\zeta$ that separates
$z$ from $w$ in $H_t$. If there is a unique
point in $\partial H_t$ at minimal distance from $\zeta$,
then $l_t^\zeta$ is
a circle with one point removed. We will consider
four cases. Let $H = H_\tau, \eta= \eta_\tau^z$.
Let $\sigma$ be the fist time $t$ greater than or equal
to $\tau$ such that $z$ lies in the unbounded component
of $H_t \setminus\eta_t^z$.
\begin{longlist}[\textit{Case} 4:]
\item[\textit{Case} 1:]
Let $F_1 =A \cap\{\sigma= \tau\}$.
In this case, $\eta$ separates $w$ from $\gamma(\tau)$.
Using the Beurling estimate, we can see that the excursion measure
between $\eta$ and $l_\tau^w$ is $O(e^{-(r+s)/4})$; the latter
is a bound for
the probability that a Brownian motion starting on $l_{\tau}^w$
reaches $\eta$ without leaving $H$. The boundary estimate
(\ref{boundary})
states that the probability an $\SLE$ in $H$ starting at $\gamma(\tau)$
hits $l_{\tau}^w$ is $O(e^{-\alpha(r+s)})$.
Therefore, on the event $F_1$,
\[
{\mathbb P}\bigl\{\tau_{s+1}(w) < \infty\mid\gamma\bigr\} \leq c
e^{-\alpha(r+s)},
\]
and using the strong Markov property and the
one point estimate (\ref{interior}), we see that
\[
{\mathbb P}[E \cap F_1 \mid\gamma] \leq{\mathbb P}\bigl\{
\tau_{n}(w) < \infty\mid\gamma\bigr\} \leq c e^{-\alpha(r+s)}
e^{ (d-2)(n-s)}.
\]

\item[\textit{Case} 2:] Let $F_2 = A \cap\{\tau< \sigma< \tau_n(w)\}$.
We write
\[
F_2 = \bigcup_{j = s}^{n-1}
F_{s,j},
\]
where
\[
F_{2,j} = F_2 \cap\bigl\{\sigma_j(w) \leq
\sigma< \sigma_{j+1}(w) \bigr\}.
\]

Since the domain $H_t$ is decreasing,
in order for $z$ to change
from being in the bounded component of $H_{t'} \setminus
\eta_{s}^z,s' < t$ to being in the unbounded component of $H_t
\setminus
\eta_t^z$, the crosscut $\eta_t^z$ must be different from
$\eta^s_z$ for $s < t$. There
are two ways that the crosscut $\eta^z_t$ can change at
time $t$;
either $\gamma(t) \in\eta^z_{t-}$, or $\gamma(t) \notin\eta
^z_{t-}$ but $\eta^z_{t-}$ is not
part of the boundary of $V_t^z$. In the latter case, the
crosscut $\eta^z_{t-}$ still separates $z$ from
infinity and $b$ in $H_t$.
Also the crosscut $\eta^z_t$ separates $z$ from $\eta^z_{t-}$.
Hence, in the latter case
$z$ is in the bounded component of $H_{t} \setminus
\eta^z_t$.

Therefore, we see that $\gamma(\sigma) \in\eta_{\sigma-}^z$.
One
endpoint of the
crosscut $\eta_\sigma^z$ is $\gamma(\sigma)$ and it separates
$w$ from infinity. On the event $F_{2,j}$, the excursion measure
between $l^w_\sigma$ and $\eta_\sigma^z$ in $H_\sigma$ is bounded
above by $O(e^{-(r+j)/2})$. Therefore, on the event $F_{2,j}$,
\[
{\mathbb P}\bigl\{\tau_n(w) < \infty\mid\gamma_\sigma\bigr
\} \leq c e^{-\alpha(r+j)} e^{-(2-d)(n-j)}.
\]
The one-point estimate shows that
\[
{\mathbb P}[F_{2,j} \mid\gamma] \leq{\mathbb P}\bigl\{
\tau_j(w) < \infty\mid\gamma\bigr\} \leq c e^{-(2-d)(j-s)}.
\]
Therefore,
\[
{\mathbb P}[ E \cap F_{2,j} \mid\gamma] \leq c e^{-\alpha(r+j)}
e^{-(2-d)(n-s)},
\]
and by summing over $j=s,s+1,\ldots, n-1$, we see that
\[
{\mathbb P}[E \cap F_{2} \mid\gamma] \leq c e^{-\alpha(r+s)}
e^{-(2-d)(n-s)}.
\]

Before proceeding with the next cases, let $ \tau' = \tau_n(w),
H' = H_{\tau'}$,
and note that on $E \setminus(F_1
\cup F_2)$, we know that $z$ is in the bounded component of $H'
\setminus\eta_{\tau'}^z$.

\item[\textit{Case} 3:] Let $F_3$ be the intersection of $A \cap\{ \tau' <
\tau_{s+1}(z) \}$ with the event that $w$ is contained in the unbounded
component of $H' \setminus\eta_{\tau'}^w$. (Note\vspace*{1pt} that this is a
stronger condition
than saying that $z$ is contained in the bounded component
of $H'
\setminus\eta_{\tau'}^z$.) On the event $F_3$, the crosscut
$\eta_{\tau'}^w$ separates $l_{\tau'}^z$ from $\gamma(\tau')$ in
$H'$. The excursion measure between $l_{\tau'}^z$
and $\eta_{\tau'}^w$ in $H'$ is bounded by $O(e^{-(r+s)/2})$,
and using the boundary exponent, we see that on the event
$F_3$,
\[
{\mathbb P}\bigl\{\tau_{r+1}(z) < \infty\mid\tau'\bigr\}
\leq c e^{-(r+s) \alpha}.
\]
The one-point estimate implies that on $A$,
${\mathbb P}\{\tau' < \infty\mid
\gamma\} = O(e^{-(2-d)(n-s)})$, and hence
\[
{\mathbb P}[E \cap F_3 \mid\gamma] \leq c e^{-\alpha(r+s)}
e^{-(2-d)(n-s)}.
\]

\item[\textit{Case} 4:] Let $F_4$ be the intersection of
$[A \setminus(F_1 \cup F_2)] \cap
\{ \tau' <
\tau_{s+1}(z) \}$ with the event that $w$ is contained in the bounded
component of $H' \setminus\eta_{\tau'}^w$. As\vspace*{1pt} noted above,
on the event $F_4$, $z$ is in the bounded component of $H'
\setminus\eta_{\tau'}^z$. The excursion measure between
$l_{\tau'}^z$ and $\eta_{\tau'}^z$ in $H'$ is $O(e^{-r/2})$, and
as before this implies that on the event~$F_4$,
\[
{\mathbb P}\bigl\{\tau_{r+1}(z) < \infty\mid\gamma'\bigr
\} \leq c e^{-\alpha r},
\]
and hence on the event $A$,
%
\begin{equation}
\label{sept132} {\mathbb P}[E \cap F_4 \mid\gamma] \leq c
e^{-\alpha r} {\mathbb P}[F_4 \mid\gamma].
\end{equation}
Similar to case~2, let $\rho$ be the first time $t \geq
\tau_r(z)$ such that $w$ is
contained in the bounded component of $H_t \setminus
\eta^w_t$, and let
\[
F_{4,j} = F_4 \cap\bigl\{\tau_{j}(w) \leq
\rho< \tau_{j+1}(w) \bigr\}.
\]
Note that
%
\begin{equation}
\label{sept131} {\mathbb P}\bigl[\tau_{j}(w) \leq\rho<
\tau_{j+1}(w) \mid\gamma\bigr] \leq{\mathbb P}\bigl\{
\tau_j(w) < \infty\mid\gamma\bigr\} \leq c e^{(j-s)(d-2)}.
\end{equation}
As\vspace*{1pt} before, we can see that
the crosscut $\eta_\rho^w$ separates $l_\rho^w$ from $\gamma(\rho)$
in $H_\rho$. [Either $\rho= \tau_r(z)$ or
$\gamma(\rho)$ is an endpoint of $\eta_\rho^w$.]
Since the excursion measure between $\eta_\rho^w$
and $l_\rho^w$ in $H_\rho$ is $O(e^{-(j - ({s}/2))/2})$,
\[
{\mathbb P}\bigl[\tau_{j+1}(w) < \infty\mid\gamma_\rho
\bigr] \leq c e^{-(j-s) \alpha},
\]
and using the one point estimate,
\[
{\mathbb P}\bigl[\tau' < \infty\mid\gamma_\rho\bigr]
\leq c e^{-(s+j) \alpha} e^{(n-j)(d-2)}.
\]
Combining this with (\ref{sept131}) and summing over $
s \leq j\leq n$,
we see that
\[
{\mathbb P}[F_4 \mid\gamma] \leq c e^{-s\alpha}
e^{-(n-s)(2-d)}.
\]
Finally, combining this with (\ref{sept132}), we see that
\[
{\mathbb P}[E \cap F_4 \mid\gamma] \leq c e^{-\alpha r} {
\mathbb P}[F_4 \mid\gamma] \leq c e^{-(r+s)\alpha} e^{(n-s)(d-2)}.
\]%
\end{longlist}\upqed
\end{pf}

Given this estimate one also shows that
if $\Im(z),\Im(w) \geq1$ with $|z-w| \leq1$,
then
%
\begin{equation}
\label{sep120} {\mathbb P}\bigl\{\tau_{r}(z) < \infty,
\tau_r(w) < \infty\bigr\} \leq c e^{2r(d-2)}
|z-w|^{d-2}.
\end{equation}
Indeed, if $\rho= \inf\{t\dvtx  |\gamma(t) - z| \leq2 |z-w|\}$,
then
\[
{\mathbb P}\{\rho< \infty\} \leq c |z-w|^{2-d},
\]
and by conformal invariance,
\[
{\mathbb P}\bigl\{ \tau_{r}(z) < \infty, \tau_r(w) <
\infty\mid\rho< \infty\bigr\} \leq\bigl[e^{-r}/|z-w|
\bigr]^{2(2-d)}.
\]

In \cite{LW}, it was shown that
the limit,
\[
\lim_{\varepsilon.\delta
\downarrow0} \varepsilon^{d-2} \delta^{d-2} {
\mathbb P}\bigl\{\crad_{{\mathbb H}\setminus\gamma} (z_1) \leq\varepsilon,
\crad_{{\mathbb H}\setminus\gamma} (z_2) \leq\delta\bigr\},
\]
exists and defines a two-point Green's function.
In Section~\ref{onepointsec},
we show how to adapt this argument
to show existence of
%
\begin{equation}
\label{twogreen} G(z_1,z_2) = \lim_{\varepsilon.\delta
\downarrow0}
\varepsilon^{d-2} \delta^{d-2} {\mathbb P}\bigl\{
\dist(z_1,\gamma) \leq\varepsilon, \dist(z_2,\gamma) \leq
\delta\bigr\}.
\end{equation}
In fact, we can write $G(z_1,z_2) = \widehat G(z_1,z_2) +
\widehat G(z_2,z_1)$ where
\[
\widehat G(z,w) = G(z) {\mathbb E}^* \bigl[G_{H_T}(w;z,\infty) \bigr],
\]
and ${\mathbb E}^*$ denotes expectation with respect to two-sided
radial $\SLE_\kappa$ from $0$ to $z$ stopped at
\[
T = \inf\bigl\{t\dvtx  \gamma(t) = z\bigr\}.
\]
See Section~\ref{onepointsec} for a review of two-sided radial $\SLE$.

\section{Existence of Minkowski content}

\subsection{Main theorem} \label{mainsec}

If $\Gamma\in{\mathcal Q}_n$ as defined
in Section~\ref{notsec}, and $\gamma$
is an $\SLE_\kappa$ curve from $0$ to $\infty$
in ${\mathbb H}$, let
\begin{eqnarray*}
Z(\Gamma) &=& {\operatorname{Cont}_d^+}(\gamma\cap\Gamma;n \log2),
\\
J_r(z) &=& e^{(2-d)r} 1\bigl\{\tau_{r}(z) < \infty
\bigr\},\qquad J_r(V) = \int_V
J_r(z) \,dA(z).
\end{eqnarray*}
Note that if $s > 0$, then $J_{r+s}(z)
\leq e^{s(2-d)} J_r(z) $.

\begin{theorem} \label{maintheorem}
Suppose $\kappa< 8$ and $\gamma$ is
an $\SLE_\kappa$ curve from $0$ to $\infty$ in ${\mathbb H}$.
Then the following holds for
all $\Gamma\in{\mathcal Q}^+$.
\begin{itemize}
\item The limit
\[
\mu(\Gamma):= \lim_{r \rightarrow\infty} J_r(\Gamma)
\]
exists with probability one and in $L^2$.
\item With probability one,
%
\begin{equation}
\label{jan267} {\operatorname{Cont}_d}(\gamma\cap\Gamma) = \mu(
\Gamma).
\end{equation}

%
\item Let $\partial_n \Gamma= \{z \in{\mathbb H}\dvtx
\dist(z,\partial\Gamma) \leq2^{-n} \}$. Then with probability
one,
%
\begin{equation}
\label{aug293} \lim_{n \rightarrow\infty} {\operatorname{Cont}_d}(
\gamma\cap\partial_n \Gamma; n \log2) = 0.
\end{equation}
\item The following moment relations holds:
%
\begin{eqnarray}
\label{aug2210} {\mathbb E}\bigl[\mu(\Gamma)\bigr] &=& \int_\Gamma
G(z) \,dA(z),
\\
\label{aug2210more} {\mathbb E}\bigl[\mu(\Gamma)^2\bigr] &=& \int
_\Gamma\int_\Gamma G(z,w) \,dA(z) \,dA(w),
\\
\label{aug22100} {\mathbb E} \bigl[Z(\Gamma)^2 \bigr] &<& \infty.
\end{eqnarray}
\end{itemize}
\end{theorem}

Since ${\mathcal Q}^+$ is countable, all the with probability
one statements can be restated as ``with probability one, for
all $\Gamma\in{\mathcal Q}^+, \ldots.$''
The bulk of the work in proving the
theorem is to prove Theorem~\ref{keyestimate} below.
Let $0 < \delta<
1/10$.
Since we want to take a limit of $J_r$ as $r \rightarrow\infty$,
we will look at
\begin{eqnarray*}
Q_r^{ \delta} (z)& = & J_r (z) -
J_{r+\delta}(z)
\\
& = & {e^{r(2-d)} } \bigl[ 1 \bigl\{\tau_{r}(z) <\infty\bigr
\} - e^{\delta(2-d)} 1 \bigl\{\tau_{r+\delta}(z) <\infty\bigr\} \bigr].
\end{eqnarray*}
The random variable $Q_r^{ \delta} (z)$
is normalized so that $|Q_r^{\delta} (z)|$
is of order $1$ but ${\mathbb E}[Q_r^{\delta} (z)]$ is nearly zero.
The main estimate shows that if $r$ is large and
$z,w$ are not too close, then
$Q_r(z)$ and $Q_r(w)$ are almost independent.
%

\begin{theorem} \label{keyestimate}
Suppose $\kappa< 8$ and
$\gamma$ is $\SLE_\kappa$ from $0$
to $\infty$ in ${\mathbb H}$.
There exists $c < \infty, \beta> 0$
such that if
$0 < \delta< 1/10$,
$\Im(z),\Im(w) \geq
1$ and
$r \geq0$, then
%
\begin{equation}
\label{maines} {\mathbb E} \bigl[ Q_r^{ \delta} (z)
Q_r^{ \delta} (w) \bigr] \leq c e^{-\beta r}
|z-w|^{\beta-2}.
\end{equation}
%
\end{theorem}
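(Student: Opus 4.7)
The plan is to split into a near-diagonal regime $|z-w|\le e^{-\theta r}$ and a far-diagonal regime $|z-w|>e^{-\theta r}$, for a small $\theta>0$ to be tuned. The near case is handled by a crude pointwise bound; the far case is where the cancellation hidden in the difference structure of $Q$ must be extracted. In the near-diagonal case, \eqref{universal} gives
\[ |Q_r^{\lambda,\delta}(z)\,Q_r^{\lambda,\delta}(w)| \le C^2\,e^{2r(2-d)}\,1\{\tau_{r-\lambda}(z)<\infty\}\,1\{\tau_{r-\lambda}(w)<\infty\}, \]
so taking expectations and applying \eqref{sep1.20} at level $r-\lambda$ yields $\E[|Q(z)Q(w)|]\le C'\,|z-w|^{d-2}$. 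This dominates $c\,e^{-r\beta}|z-w|^{\beta-2}$ precisely when $|z-w|^{d-\beta}\le c'\,e^{-r\beta}$, so the choice $\theta=\beta/(d-\beta)$ covers this regime for any $\beta\in(0,d)$.

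\textbf{Reduction of the far-diagonal case.} A direct computation from \eqref{jr} gives
\[ \E[J^\lambda_{r_1}(z)\,J^\lambda_{r_2}(w)] = \frac{1}{\lambda^2}\int_0^\lambda\!\!\int_0^\lambda \psi(r_1-s_1,\,r_2-s_2)\,ds_1\,ds_2, \qquad \psi(r_1,r_2):=e^{(r_1+r_2)(2-d)}\Prob\{\tau_{r_1}(z)<\infty,\tau_{r_2}(w)<\infty\}, \]
so $\E[Q(z)Q(w)]$ is an averaged second-order mixed difference of $\psi$ in $(r_1,r_2)$. The theorem reduces to proving a quantitative two-point Green's function asymptotic of the form
\[ \psi(r_1,r_2) = G(z,w) + R(r_1,r_2;z,w), \qquad |R|\le c\,|z-w|^{d-2}\,e^{-\alpha'\min(r_1,r_2)}, \]
where $G(z,w)$ is the two-point Green's function from \eqref{twogreen}. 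The mixed difference kills the constant $G(z,w)$ and leaves an $O(|z-w|^{d-2}\,e^{-\alpha' r})$ remainder, which for $|z-w|>e^{-\theta r}$ and $\beta\le\alpha'$ beats the target bound.

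\textbf{Proof of the quantitative two-point asymptotic.} I would decompose on which point is visited first and, on the event $\tau_{r_1-\lambda}(z)<\tau_{r_2-\lambda}(w)$, introduce the stopping time $\sigma=\sigma_u(r_1-\lambda,z)$ from Section \ref{twopointsec}. Corollary \ref{hatrack} controls the probability that $\gamma$ returns to the ball of radius $e^{-r_1}$ around $z$ after $\sigma$ by $O(e^{\beta(u-1)r_1})$, so the indicator $1\{\tau_{r_1}(z)<\infty\}$ is $\gamma_\sigma$-measurable up to a negligible event. By the domain Markov property the continuation of $\gamma$ is $SLE_\kappa$ from $\gamma(\sigma)$ to $\infty$ in $H_\sigma$, and Theorem \ref{Greenother} applied in $H_\sigma$ gives
\[ \Prob\{\tau_{r_2}(w)<\infty \mid \gamma_\sigma\} = G_{H_\sigma}(w;\gamma(\sigma),\infty)\,e^{r_2(d-2)}\,\bigl[1+O(e^{-\alpha r_2})\bigr] \]
on the sub-event that $w$ remains at unit order distance from the new slit. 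Taking the outer expectation with respect to $\gamma_\sigma$ and adding the symmetric contribution from the opposite ordering recovers the representation $G(z,w)=\hat G(z,w)+\hat G(w,z)$ with $\hat G(z,w)=G(z)\,\E^*[G_{H_T}(w;z,\infty)]$ stated in the excerpt, with the error $R$ dominated by three contributions: the no-return estimate (Corollary \ref{hatrack}), the one-point rate in $H_\sigma$ (Theorem \ref{Greenother}), and the rate of convergence of the conditional law of $\gamma$ near $\gamma(\tau_{r_1-\lambda}(z))$ to the two-sided radial $SLE_\kappa$ limit.

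\textbf{Main obstacle.} The principal technical difficulty is the last of these three contributions: an \emph{effective} convergence rate for the conditional law of SLE, given a close approach to $z$, to the two-sided radial law, uniformly in $z$ and in the shape of the slit $H_\sigma$. This is the content of Section \ref{onepointsec} and is the heart of the argument. A secondary obstacle is handling the bad geometric configurations in which $\sigma(z)$ leaves $w$ very close to the newly created boundary, so that $G_{H_\sigma}(w;\gamma(\sigma),\infty)$ is large and Theorem \ref{Greenother} cannot be applied directly; this is precisely the regime addressed by Lemma \ref{lastlemma} together with the Beurling-type separation estimates behind Lemmas \ref{inpart}--\ref{inpart2}, which allow one to restrict to a good geometric event on which the one-point asymptotics are uniform.
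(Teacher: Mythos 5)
Your outline is broadly parallel to the paper's argument --- the same near/far split via \eqref{universal} and \eqref{sep1.20}, the same stopping time $\sigma$ and no-return estimates (Corollary \ref{hatrack}, Lemma \ref{lastlemma}), and the correct identification of the sharp one-point rate (the coupling to two-sided radial in Section \ref{onepointsec}) as the heart of the matter --- but the central reduction is organized differently, and less economically. You reduce the far-diagonal case to a quantitative two-point asymptotic $\psi(r_1,r_2)=G(z,w)+R$ with an exponential rate, so that the mixed second difference kills the constant. The paper never proves (and never needs) such a rate for the two-point function. Instead it exploits the fact that $\tilde Q_r(z)\,\cI_r(z,w)$ is $\F_\sigma$-measurable and applies the one-point estimate with rate (Proposition \ref{onesided}) \emph{conditionally in $H_\sigma$ to the single point $w$}, obtaining $|\E(Q_r(w)\mid\F_\sigma)|\le c\,e^{-\alpha r/2}$; the $z$-factor is then handled only through the crude bound $|\tilde Q_r(z)|\le Ce^{r(2-d)}1\{\tau_{r-1}(z)<\infty\}$. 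This sidesteps entirely the hardest step in your route, namely controlling $e^{r_1(2-d)}\,\E[1\{\tau_{r_1}(z)<\infty\}\,G_{H_\sigma}(w;\gamma(\sigma),\infty)]$ with an effective rate, which requires tracking a functional of the \emph{whole} initial segment of the curve under the coupling of Proposition \ref{couple} (the coupled curves share only the tail maps $g^u_t$, so $G_{H_\sigma}(w;\cdot)$ does not simply agree for the two copies). The two-point Green's function \eqref{twogreen} appears in the paper only afterwards, to identify the limit of $\E[(J_r^\lambda)^2]$, and is established there without a rate.

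One concrete soft spot: your claimed remainder $|R|\le c\,|z-w|^{d-2}e^{-\alpha'\min(r_1,r_2)}$ is stronger than what the available estimates deliver. The error terms coming from \eqref{aug20.5} (the curve visiting $w$ at an intermediate scale before finishing at $z$) and from the bounded-component event $\cJ_r(z,w)=1$ produce, after normalization, factors like $e^{-\alpha s}$ or $e^{-(a-\frac14)r/2}$ with no accompanying $|z-w|^{d-2}$; this is precisely why the theorem is stated with $|z-w|^{\beta-2}$ for a small $\beta$ rather than $|z-w|^{d-2}$. Since you restrict to $|z-w|>e^{-\theta r}$ this does not break your argument --- you should simply weaken the intermediate claim to match --- but as stated the asymptotic you propose to prove is not one the paper's machinery yields.
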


We will not try to find the optimal $c,\beta$
in our proof.

\begin{pf*}{Proof of Theorem~\ref{maintheorem}
given Theorem~\ref{keyestimate}}
By scaling, we may assume that $\Gamma=[j,j+1) \times
i [k,k+1) \in{\mathcal Q}^+_0$ with $k \geq1$.
Suppose that $ 0 < \delta< 1/10$. Let $J_r =
J_r(\Gamma)$ and
\[
Q_r = Q_r^\delta= Q_r^{ \delta}
(\Gamma) = J_r - J_{r+\delta} = \int_\Gamma
Q_r^{ \delta} (z) \,dA(z).
\]
%
By
integrating (\ref{maines}), we see that
if $r \geq0$, then
$ {\mathbb E} [ Q_r ^2
] \leq c e^{-\beta r}$.
Let
\[
X_n = X_n^{ \delta} = J_0 + \sum
_{j=1}^n |Q_{j\delta}|.
\]
Then $X_n$ converges in $L^2$ to a random variable $X_\infty$. For each
positive integer
$n$, $|J_{n\delta} | \leq X_\infty$, and hence
%
\begin{equation}
\label{jan268} \sup_{r \geq0} J_r \leq
e^{\delta(2-d)} \sup_{n} J_{n\delta} \leq
e^{1/10} X_\infty.
\end{equation}
%
Also, if $n \leq m $,
\[
|J_{n\delta} - J_{m\delta}| \leq X_\infty- X_n.
\]
Therefore, $\{J_{n\delta} \}$ is a Cauchy
sequence in $L^2$ and has an $L^2$-limit which we call~$J_\infty$.
If $r\delta\leq s < (r+1)\delta$, we similarly have
%
\begin{equation}
\label{aug241} {\mathbb E}\bigl[(J_s - J_{r\delta}
)^2\bigr] = {\mathbb E}\bigl[\bigl(Q_{r\delta}^{
s-r\delta}
\bigr)^2\bigr] \leq c e^{-\beta r\delta},
\end{equation}
so we see that
\[
\lim_{s \rightarrow\infty} {\mathbb E} \bigl[(J_s -
J_\infty)^2 \bigr] \leq\lim_{s \rightarrow\infty} {
\mathbb E} \bigl[(J_s - J_{r\delta})^2 \bigr] + \lim
_{s \rightarrow\infty} {\mathbb E} \bigl[(J_{r\delta} -
J_\infty)^2 \bigr] = 0.
\]
Hence, $J_s \rightarrow J_\infty$ in $L^2$; in particular,
$J_\infty$
does not depend on $\delta$.

Chebyshev's inequality shows
that
\[
\sum_{n=1}^\infty{\mathbb P}\bigl\{|
Q_{n\delta}| \geq e^{-\beta n \delta/4} \bigr\} \leq\sum
_{n=1}^\infty\frac{{\mathbb E}[Q_{n\delta}^2]}{e^{-\beta n \delta/2 }
}< \infty.
\]
Hence, for each $\delta$, by the Borel--Cantelli lemma,
with probability one for all $n$ sufficiently large,
\[
\llvert J_{n\delta} - J_{(n+1)\delta} \rrvert\leq2 e^{-\beta n \delta/4}.
\]
This shows that with probability one, the sequence $\{J_{n\delta} \}$
is a Cauchy sequence, and hence with probability one,
for all $\delta= 2^{-m}$,
\[
\lim_{n \rightarrow\infty} J_{n\delta} = J_\infty.
\]
If $n\delta\leq r \leq(n+1)\delta$,
then
%
\begin{equation}
\label{aug201000} e^{\delta(d-2)} J_{(n+1) \delta} \leq J_r \leq
e^{\delta(2-d)} J_{n\delta},
\end{equation}
from which we conclude that with
probability one, for all $\delta= 2^{-m} $,
\[
e^{\delta(d-2)} J_\infty\leq\liminf_{r \rightarrow\infty}
J_r \leq\limsup_{r \rightarrow\infty} J_r \leq
e^{\delta(2-d)} J_\infty.
\]
Since this holds for all $\delta$, $J_r \rightarrow J_\infty$.

Note that for $r > 0$,
\begin{eqnarray*}
{\mathbb E} [J_r ] &=& \int_\Gamma
e^{(2-d) r} {\mathbb P}\bigl\{\tau_{r}(z) < \infty\bigr\} \,dA(z),
\\
{\mathbb E} \bigl[J_r ^2 \bigr] &=& \int
_\Gamma\int_\Gamma e^{2(2-d) r} {\mathbb
P}\bigl\{\tau_{r}(z), \tau_r(w) < \infty\bigr\} \,dA(z)
\,dA(w).
\end{eqnarray*}
Since $J_r \rightarrow J_\infty$ in $L^2$, we know that
\[
{\mathbb E}[J_\infty] = \lim_{r \rightarrow\infty} {\mathbb E}[J_r],\qquad
{\mathbb E}\bigl[J_\infty^2\bigr]= \lim_{r \rightarrow\infty} {\mathbb E}\bigl[J_r^2\bigr].
\]
Hence, (\ref{aug2210}) and (\ref{aug2210more})
follow from Theorem~\ref{greentheorem}
and (\ref{twogreen}). Indeed, the definition of the Green's
function (including the choice of multiplicative constant) was
made in order for these equalities to hold.

Note that if $ n \log2 \leq r \leq(n+1) \log2$,
\[
\bigl\llvert{\operatorname{Cont}_d}[\gamma\cap\Gamma; r] -
J_{r}(\Gamma)\bigr\rrvert\leq c J_{n \log2}(
\partial_n \Gamma).
\]
Using (\ref{interior}), we see that ${\mathbb E} [{\operatorname
{Cont}_d}(\gamma
\cap\partial_n \Gamma;
n \log2) ]
\leq c \area(\partial_n \Gamma) \leq c 2^{-n}$. Hence, using
the Markov inequality and the Borel--Cantelli lemma, we see
that with probability one for all $n$ sufficiently
large ${\operatorname{Cont}_d}(\gamma\cap\partial_n \Gamma;
n \log2) \leq2^{-n/2}$. This gives~(\ref{aug293}).
Also,
\begin{eqnarray*}
{\operatorname{Cont}_d}[\gamma\cap\Gamma; n \log2] & \leq&
J_{n\log2}
\\
& \leq& {\operatorname{Cont}_d}[\gamma\cap\Gamma; n \log2] + {
\operatorname{Cont}_d}(\gamma\cap\partial_n \Gamma; n
\log2).
\end{eqnarray*}
This gives
(\ref{jan267}).

Given $\Gamma\in{\mathcal Q}_0$, let $\Gamma_1,
\ldots, \Gamma_{12}$ denote the twelve squares
in ${\mathcal Q}_1$ whose interior does not intersect $\Gamma$
but whose boundary does. Note that these squares are in
${\mathcal Q}_1^+$. Any point within distance $1/2$ of
$\gamma\cap\Gamma$ is contained in
$\Gamma\cup\Gamma_1 \cup\cdots\cup\Gamma_{12}$,
and hence for
$r \geq1$,
\[
{\operatorname{Cont}_d}(\gamma\cap\Gamma; r) \leq J_{r}(
\Gamma) + J_r(\Gamma_1) + \cdots+ J_r(
\Gamma_{12}).
\]
This implies that
\[
{\operatorname{Cont}_d^+}(\gamma\cap\Gamma;0) \leq c + \sup
_{r \geq1} \bigl[J_{r}(\Gamma) + J_r(
\Gamma_1) + \cdots+ J_r(\Gamma_{12}) \bigr].
\]
Since $\Gamma_j \in{\mathcal Q}_1^+$,
the argument as in (\ref{jan268}), we see that for each $j$,
\[
\sup_{r \geq1} J_r (\Gamma_j)
\]
is square integrable. Hence, $ {\operatorname{Cont}_d^+}(\gamma\cap
\Gamma;0) $
is square integrable which\break gives~(\ref{aug22100}).
\end{pf*}

\subsection{Natural length} \label{natsec}

By Theorem~\ref{maintheorem}, with
probability one we can define
a function on ${\mathcal Q}$ by
\[
\mu(\Gamma) = {\operatorname{Cont}_d}(\gamma\cap\Gamma) = {
\operatorname{Cont}_d}\bigl(\gamma\cap{\operatorname{int}}(\Gamma)\bigr) = {
\operatorname{Cont}_d}(\gamma\cap\overline{\Gamma}).
\]

\begin{proposition}
On this event, $\mu$ extends to be a Borel measure.
\end{proposition}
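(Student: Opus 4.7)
The plan is to apply Carath\'eodory's extension theorem, viewing $\dyadic$ as a semiring generating the Borel $\sigma$-algebra on $\Half$. I will verify three properties of $\mu$ on this semiring: finite additivity under dyadic refinement, countable subadditivity under arbitrary dyadic covers, and countable superadditivity for disjoint dyadic decompositions. The first is the substantive step and uses \eqref{aug29.3}; the other two follow from soft arguments.

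For finite additivity, the model case is when $\dsquare \in \dyadic_n$ is subdivided into its four children $\dsquare_1,\ldots,\dsquare_4 \in \dyadic_{n+1}$. Fix $\epsilon > 0$ and let $\dsquare_i^{\epsilon}$ denote the set of points in $\dsquare_i$ at distance at least $\epsilon$ from $\p \dsquare_i$; the four closed sets $\overline{\dsquare_i^\epsilon}$ are at pairwise positive distance, so iterating \eqref{aug25.0} of Lemma \ref{symdifprop} gives $\MC(\gamma \cap \bigcup_i \dsquare_i^\epsilon) = \sum_i \MC(\gamma \cap \dsquare_i^\epsilon)$. Since $\dsquare \setminus \bigcup_i \dsquare_i^\epsilon$ is contained in an $\epsilon$-neighborhood of $\bigcup_i \p \dsquare_i$, equation \eqref{aug29.3} yields $\MCup(\gamma \cap (\dsquare \setminus \bigcup_i \dsquare_i^\epsilon)) \to 0$ as $\epsilon \to 0$. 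The fourth bullet of Lemma \ref{symdifprop} then forces $\mu(\dsquare) = \lim_{\epsilon \to 0}\sum_i \MC(\gamma \cap \dsquare_i^\epsilon) = \sum_i \mu(\dsquare_i)$, the second equality by the same shrinking argument inside each child. Iterating gives additivity whenever a dyadic square is decomposed as a finite disjoint union of dyadic sub-squares at mixed levels.

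Countable subadditivity is immediate from the pointwise inequality $\mcon(\gamma \cap \dsquare; r) \leq \sum_k \mcon(\gamma \cap \dsquare_k; r)$ whenever $\dsquare \subset \bigcup_k \dsquare_k$ (the indicator of a union is at most the sum of indicators), followed by Fatou's lemma, which gives $\mu(\dsquare) \leq \sum_k \mu(\dsquare_k)$. For countable superadditivity when $\dsquare = \bigsqcup_k \dsquare_k$, the complement $\dsquare \setminus \bigsqcup_{k \leq N}\dsquare_k$ is itself a finite disjoint union of dyadic squares (pass to a common refinement), so finite additivity gives $\mu(\dsquare) \geq \sum_{k \leq N}\mu(\dsquare_k)$; sending $N \to \infty$ and combining with subadditivity yields equality. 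Since $\mu(\dsquare) \leq Z(\dsquare) < \infty$ almost surely by \eqref{aug22.100}, $\mu$ is a $\sigma$-finite countably additive premeasure on the semiring $\dyadic$, and Carath\'eodory's theorem produces the desired unique Borel extension. The main obstacle is the finite additivity step, because dyadic children share boundaries on which $\gamma$ could a priori carry content; \eqref{aug29.3} is precisely what rules this out.
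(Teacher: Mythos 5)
Your overall architecture (semiring, finite additivity, then countable additivity, then Carath\'eodory) matches the paper's, and your finite-additivity step correctly identifies \eqref{aug29.3} as the key input. But the countable subadditivity step contains a genuine error: Fatou's lemma points the wrong way. From the pointwise bound $\mcon(\gamma \cap \dsquare;r) \leq \sum_k \mcon(\gamma\cap\dsquare_k;r)$ you get $\mu(\dsquare) \leq \liminf_r \sum_k \mcon(\gamma\cap\dsquare_k;r)$, while Fatou gives $\sum_k \mu(\dsquare_k) \leq \liminf_r \sum_k \mcon(\gamma\cap\dsquare_k;r)$ --- both quantities are bounded \emph{above} by the same $\liminf$, which yields no comparison between them. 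What you need is a reverse Fatou inequality, $\limsup_r \sum_k \leq \sum_k \limsup_r$, and that requires a summable dominating sequence; the natural candidate $\sup_{r}\mcon(\gamma\cap\dsquare_k;r)$ need not be summable over $k$ (a Whitney-type decomposition of $\dsquare$ relative to a point of $\gamma$ has infinitely many squares meeting $\gamma$, each contributing at least a constant to the sup at scales comparable to the square). This is exactly where the paper's ``standard topological argument'' enters: one shrinks $\gamma\cap\dsquare$ to a compact piece losing little content (via \eqref{aug29.3}), fattens each $\dsquare_k$ to an open set gaining little upper content (again via \eqref{aug29.3} applied to $\dsquare_k$ and its neighbors), extracts a finite subcover, and only then uses \emph{finite} subadditivity. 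Equivalently, one must show $\mu(R_N)\to 0$ for the remainders $R_N = \dsquare\setminus\bigsqcup_{k\leq N}\dsquare_k$, and neither finite additivity nor Fatou delivers this; an $L^1$ argument ($\E[\mu(R_N)]\to 0$ plus monotonicity) would work for a fixed decomposition but not simultaneously for the uncountably many decompositions that a single a.s.\ event must handle.

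A secondary, fixable imprecision: in the finite-additivity step you invoke \eqref{aug25.0} and the fourth bullet of Lemma \ref{symdifprop} for the shrunken squares $\dsquare_i^\epsilon$, but both require that $\MC(\gamma\cap\dsquare_i^\epsilon)$ \emph{exist}, and Theorem \ref{maintheorem} only gives existence for dyadic squares. The argument goes through if you replace $\MC$ by $\MCup$ and $\MCdown$ throughout: additivity over sets at pairwise positive distance holds for upper and lower contents without any existence hypothesis, and the sandwich $\mu(\dsquare_i) - \MCup(\gamma\cap\p_n\dsquare_i) \leq \MCdown(\gamma\cap\dsquare_i^\epsilon) \leq \MCup(\gamma\cap\dsquare_i^\epsilon)\leq \mu(\dsquare_i) + \MCup(\gamma\cap\p_n\dsquare_i)$ together with \eqref{aug29.3} then closes the loop. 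As written, though, the step assumes existence of contents it has not established.
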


\begin{pf} For each $\Gamma\in{\mathcal Q}^+$ and positive integer $r$,
we can define a Borel measure $\mu_r$ by stating
that the Radon--Nikodym derivative with respect to Lebesgue
measure is $J_r(z)$. Since $\mu_r(\Gamma) \rightarrow\mu(\Gamma)$
and $\Gamma$ is compact,
for each subsequence~$\{r_j\}$, there is a sub-subsequence
$\{r_{j_k}\}$ that converges to a measure $\mu'$ with
total mass $\mu(\Gamma)$. By using a diagonalization
argument, we can find a single sub-subsequence
such that the convergence holds
for all $\Gamma\in{\mathcal Q}^+$.
By (\ref{aug293}), we see that $\mu'(\partial\Gamma)
= 0$. Any open set $U$ can be written as a countable union
of squares $\Gamma\in{\mathcal Q}^+$ such that the interiors
of the squares are disjoint. Hence,
we can determine $\mu'(U)$ for any open set,
and hence we can see that $m' = \mu$ is unique.
\end{pf}

%

We call $\mu$ the (\textit{natural}) \textit{occupation measure} for
the $\SLE$ curve $\gamma$. If $D$ is an open set,
then we can find $D_n \in{\mathcal S}_{\mathbb H}$ increasing to $D$,
and hence
\[
{\mathbb E}\bigl[\mu(D)\bigr] = \int_D G(z) \,dA(z),\qquad {
\mathbb E}\bigl[\mu(D)^2\bigr] = \int_{D \times D} G(z,w)
\,dA(z) \,dA(w).
\]
It is not immediately obvious, but we
will now show that, with probability one, for all $0 \leq s < t <
\infty$,
%
\begin{equation}
\label{sep111} \mu\bigl(\gamma[s,t] \bigr) = \mu(\gamma_t \setminus
\gamma_s ) = {\operatorname{Cont}_d} \bigl( \gamma[s,t]
\bigr).
\end{equation}
The bulk of the work is in
the following
lemma. Recall that $H_s$ is the unbounded
component of ${\mathbb H}\setminus\gamma_s$, and let
\[
\partial_n H_s = \bigl\{z \in\overline H_s\dvtx
\dist(z,\partial H_s) \leq2^{-n}\bigr\}.
\]

\begin{lemma} \label{hardlemma}
There exists $\alpha> 0$ such that the following
holds with probability one.
\begin{itemize}
\item For each $t_0$, there exists $n_0 < \infty$ such that
if $0 \leq s \leq t_0$ and $n \geq n_0$, then
%
\begin{equation}
\label{aug301} {\operatorname{Cont}_d^+}\bigl(\gamma
\bigl[s,s+2^{-n}\bigr]\bigr) \leq2^{-n\alpha}.
\end{equation}
\item
Suppose that $0 \leq s < t $, and $u > 0$. Then
%
\begin{equation}
\label{aug302} \lim_{n \rightarrow\infty} {\operatorname{Cont}_d^+}
\bigl[ \gamma[s+u,t] \cap\partial_n H_s\bigr] = 0.
\end{equation}
\end{itemize}
\end{lemma}

The limit (\ref{aug302}) is immediate for $\kappa\leq4$ since
$\gamma[s+u,t] \cap\partial_n H_s$ is empty if $n$ is large.
Before proving the lemma, we will
show how to deduce
(\ref{sep111}) from the
lemma. We approximate $\gamma[s,t]
$ by intersections of $\gamma$ with
finite unions of dyadic squares.
If $s < t$ and
$n$ is a positive integer, let $V_n(s,t)$ denote the union of
all $\Gamma\in{\mathcal Q}_n$ satisfying $\Gamma\subset
H_s \setminus\partial_n H_s$ and
$ \gamma[s,t] \cap\Gamma\neq\varnothing$.
Let $O_n(s,t) = \gamma\cap V_n(s,t)$.
Note that $O_n(s,t) \subset\gamma\setminus\gamma_s$,
but it is possible for $\gamma(t,\infty) \cap O_n(s,t)
$ to be nonempty.
Note that if $u > 0$, then
\begin{eqnarray*}
O_n(s,t) \setminus\gamma[s,t] &\subset&\gamma[t,t+u] \cup\bigl(
\gamma[{t+u},\infty) \cap\partial_{n-1} H_t \bigr),
\\
\gamma[s,t] \setminus O_n(s,t) &\subset&\gamma[s,s+u] \cup\bigl[
\gamma({s+u},\infty) \cap\partial_{n-2} H_s \bigr].
\end{eqnarray*}
Here, we use the simple geometric facts that
$O_n(s,t) \cap\overline H_t \subset\partial_{n-1}H_t$,
and that if $\Gamma
\in{\mathcal Q}_n$, then either $\Gamma\subset
H_s \setminus\partial_n H_s
$ or $\Gamma\subset\partial_{n-2}
H_s$.
The lemma implies that
\[
\lim_{n \rightarrow\infty} {\operatorname{Cont}_d^+} \bigl[
\gamma[s,t] \setminus O_n(s,t) \bigr] + \lim_{n \rightarrow\infty}
{\operatorname{Cont}_d^+} \bigl[ O_n(s,t) \setminus
\gamma[s,t] \bigr] =0.
\]
Then (\ref{sep111}) follows from (\ref{aug251}).
The remainder of this subsection will be devoted
to proving the lemma. There is some technical work involved
here and are the basic reasons why the lemma holds.
\begin{itemize}
\item For (\ref{aug301}), we use the H\"older continuity
of an $\SLE$ path to say that that the diameter of
$\gamma[s,s+2^{-n}]$ is not very big. We also use moment
estimates to show that the Minkowski content is not
very big on any set of small diameter.
\item For (\ref{aug302}), we use the fact that
$
\gamma[s+u,t] \cap\partial_n H_s$ consists of points of the
curve that are either near the real line or are nearly double points
of the curve. We estimate moments for the content of
such paths.
\end{itemize}

We start by using the following lemma.

\begin{lemma} \label{aug24lemma1}
Let $Z(\Gamma)$ be defined
as before Theorem~\ref{maintheorem}.
There exists $c < \infty$ such that
if $\Gamma\in{\mathcal Q}_n^+$,
then
\[
{\mathbb E}\bigl[Z(\Gamma) \bigr] \leq c G(\Gamma),\qquad {\mathbb E}\bigl
[Z(\Gamma)
^2\bigr] \leq c 2^{-dn} G(\Gamma).
\]
\end{lemma}

\begin{pf} If\vspace*{1pt} $\widetilde\Gamma\in{\mathcal Q}_n^+$, then
$\Gamma= 2^n \widetilde\Gamma\in{\mathcal Q}_0^+$
with $G(\Gamma) = 2^{dn} G(\widetilde\Gamma)$.
Also, the distribution of $Z(\Gamma)$ is the same
as that of $2^{dn} Z(\widetilde\Gamma)$. Hence,
we may assume that
$\Gamma\in
{\mathcal Q}_0^+$.

If $\dist(0,\Gamma) \leq10$, then $G(\Gamma)
\asymp1$ and we can use
(\ref{aug22100}). Otherwise, let $\tau$ be the first
time that $\dist(\Gamma,\gamma(t)) = 8$. By (\ref{interior}),
${\mathbb P}\{\tau< \infty\} \asymp G(\Gamma)$, and
by distortion estimates we can see that
\[
{\mathbb E} \bigl[Z(\Gamma) \mid\tau< \infty\bigr] \leq c,\qquad {\mathbb E}
\bigl[Z(\Gamma)^2 \mid\tau< \infty\bigr] \leq c.
\]\upqed
\end{pf}

\begin{corollary} \label{aug27lemma1}
With probability one,
if $R < \infty$, then
for $n$ sufficiently large,
$\Gamma\in{\mathcal Q}_n$ with $\dist(0,\Gamma)
\leq R$,
\[
Z(\Gamma) \leq n 2^{-dn/2}.
\]
\end{corollary}

\begin{pf}
By Chebyshev's inequality,
if $\Gamma\in{\mathcal Q}_n$,
\[
{\mathbb P}\bigl\{Z(\Gamma) \geq n 2^{-nd/2}\bigr\} \leq n^{-2}
2^{nd} {\mathbb E}\bigl[Z(\Gamma)^2\bigr] \leq c
n^{-2} G(\Gamma).
\]
Hence, if $V$ is any bounded set,
\[
\sum_{n=0}^\infty\sum
_{\Gamma\in{\mathcal Q}_n,
\Gamma\subset V} {\mathbb P}\bigl\{Z(\Gamma) \geq n 2^{-nd/2}
\bigr\} \leq c \int_V G(z) \,dA(z) < \infty.
\]
The result follows from the Borel--Cantelli
lemma.
\end{pf}

Note that
\[
\partial_n {\mathbb H}= \bigl\{z \in{\mathbb H}\dvtx  \Im(z)
\leq2^{-n}\bigr\}.
\]

\begin{lemma} \label{aug27lemma2}
With probability one,
if $R < \infty$ and $u >0$, then for all $n$ sufficiently
large
\[
{\operatorname{Cont}_d^+} \bigl(\gamma\cap\partial_n {
\mathbb H} \cap\bigl\{|z| \leq R\bigr\} \bigr) \leq u 2^{-n}.
\]
In particular, for each $t_0$, for all $n$ sufficiently
large,
\[
{\operatorname{Cont}_d^+} \bigl(\gamma[0,t_0] \cap
\partial_n{\mathbb H} \bigr) \leq u 2^{-n}.
\]
\end{lemma}

\begin{pf}
The argument is the same
for all $R$; for ease, we let $R=1$
and write $V_n = \partial_n {\mathbb H}\cap\{|z|
\leq1\}$.
We will first
show that
%
\begin{equation}
\label{sep113} \sum_{n=1}^\infty{\mathbb P}
\bigl\{ {\operatorname{Cont}_d} (\gamma\cap V_n; n \log2)
\geq2^{-n} \bigr\} < \infty.
\end{equation}
Since $V_{n}
\subset\bigcup_{|j| \leq2^n}
\Gamma_n(j,0) $,
\begin{eqnarray*}
{\operatorname{Cont}_d}(\gamma\cap V_n;n \log2)& \leq&
\sum_{|j|
\leq2^n} {\operatorname{Cont}_d}\bigl(
\gamma\cap\Gamma_n(j,0);n \log2\bigr)
\\
& \leq& 6\cdot 2^{-2n} 2^{(2-d)n} \sum_{|j| \leq2^n}
1\bigl\{\gamma\cap\Gamma_n(j,0) \neq\varnothing\bigr\}.
\end{eqnarray*}
The estimate (\ref{boundary}) implies that
${\mathbb P}\{\gamma\cap\Gamma_n(j,0)
\neq\varnothing\} \leq c j^{1-4a} $.
If we choose $\beta$ with
$ 1 < \beta< d - (2-4a)_+$, we
can see that
\begin{eqnarray*}
{\mathbb E} \bigl[{\operatorname{Cont}_d}(\gamma\cap
V_n;n \log2) \bigr] &\leq& c 2^{-n \beta},
\\
{\mathbb P}\bigl\{{\operatorname{Cont}_d}(\gamma\cap
V_n; n \log2) \geq2^{-n}\bigr\} &\leq& c 2^{-n (\beta- 1)}.
\end{eqnarray*}
This gives (\ref{sep113}), and by the
Borel--Cantelli lemma with probability one for
all $n$ sufficiently large,
\[
{\operatorname{Cont}_d}(\gamma\cap V_n; n \log2)
\leq2^{-n}.
\]

It follows that for
$n$ sufficiently large, if $m \geq n$,
\[
{\operatorname{Cont}_d} (\gamma\cap V_n;m \log2 )
\leq2^{-m} + {\operatorname{Cont}_d} \bigl(\gamma\cap(
V_n \setminus V_m);m \log2 \bigr),
\]
and hence
\[
{\operatorname{Cont}_d^+} (\gamma\cap V_n )
\leq2^{2-d} \sup_{m \geq n} {\operatorname{Cont}_d}
\bigl(\gamma\cap( V_n \setminus V_m);m \log2 \bigr),
\]
where the supremum on the right is restricted
to integers $m$.
Let ${\mathcal A}_n$ denote the set of all
squares of the form
$\Gamma_l(j,1), l,j \in{\mathbb Z}$, that intersect $V_n$.
These squares are disjoint and
\[
{\operatorname{Cont}_d} \bigl(\gamma\cap( V_n \setminus
V_m);m \log2 \bigr) \leq\sum_{\Gamma\in{\mathcal A}_n} Z(
\Gamma).
\]
Hence,
\[
{\operatorname{Cont}_d^+} (\gamma\cap V_n )
\leq2^{2-d}\sum_{\Gamma\in{\mathcal A}_n} Z(\Gamma).
\]
By Lemma~\ref{aug24lemma1},
\[
\sum_{\Gamma\in{\mathcal A}_n} {\mathbb E} \bigl[Z(\Gamma) \bigr] \leq
c \sum_{\Gamma\in{\mathcal A}_n} G(\Gamma) \leq c G(V_n).
\]
As above, we find $\beta> 1$
such that
$ G(V_n)
\leq c 2^{-n \beta} $,
and hence
\begin{eqnarray*}
{\mathbb P} \biggl\{ 2^{2-d} \sum_{\Gamma\in
{\mathcal A}_n} Z(
\Gamma) \geq u 2^{-n } \biggr\} & \leq& u^{-1}
2^{n} {\mathbb E} \biggl[ 2^{2-d}\sum
_{ \Gamma\in{\mathcal A}_n} Z(\Gamma) \biggr]
\\
& \leq& c u^{-1} 2^{n(1-\beta)}.
\end{eqnarray*}
Hence, by the Borel--Cantelli lemma, with
probability one, for all $n$ sufficiently
large and all $m \geq n$,
\[
2^{2-d}\sum_{\Gamma\in{\mathcal A}_n} Z(\Gamma) \leq u
2^{-n}.
\]\upqed
\end{pf}

The next proposition establishes the H\"older continuity
of the function $t \mapsto{\operatorname{Cont}_d}(\gamma(0,t])$ and
completes the proof of (\ref{aug301}).

\begin{proposition} \label{holderprop}
There exists $\alpha> 0$
such that with probability one for every $t
< \infty$
for all $n$ sufficiently large and all $s \leq t$,
\[
{\operatorname{Cont}_d^+}\bigl(\gamma\bigl[s,s+ 2^{-n}
\bigr]\bigr) \leq2^{-n \alpha}.
\]
\end{proposition}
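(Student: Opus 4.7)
The plan is to combine three ingredients already in hand: (i) the known H\"older continuity of $\gamma$ in the capacity parametrization; (ii) the pointwise upper bound for $Z(\dsquare)$ in Corollary \ref{aug27.lemma1}; and (iii) the near-boundary content bound in Lemma \ref{aug27.lemma2}. It is known that with probability one the $SLE_\kappa$ curve in capacity parametrization is H\"older continuous on every bounded interval: there exist a deterministic $\alpha_1\in(0,1)$ and a random $K<\infty$ with $|\gamma(s)-\gamma(s')|\leq K|s-s'|^{\alpha_1}$ for all $s,s'\in[0,t+1]$. Setting $\alpha_2:=\alpha_1/2$, for $n$ large (depending on $K$), every segment $\gamma[s,s+2^{1-n}]$ with $s\leq t$ is contained in a disk of radius $2^{-n\alpha_2}$.

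By monotonicity of $\MCup$, if $s_k=k\,2^{-n}$ and $s\in[s_k,s_{k+1}]$, then $\gamma[s,s+2^{-n}]\subset \gamma[s_k,s_k+2\cdot 2^{-n}]$, so it suffices to control $\MCup(\gamma[s_k,s_k+2\cdot 2^{-n}])$ uniformly in the $O(2^n)$ dyadic times $s_k\leq t$. Fix such $s_k$, let $\zeta=\gamma(s_k)$, pick random $R<\infty$ with $\gamma[0,t+1]\subset\{|z|<R\}$, and set $m=\lfloor n\alpha_2\rfloor-2$. I split into two cases. If $\Im(\zeta)\geq 2^{-m}$, the disk $\{|z-\zeta|\leq 2^{-n\alpha_2}\}$ lies at distance $\geq 2^{-m-1}$ from $\R$ and can be covered by at most $C_0$ dyadic squares $\dsquare\in\dyadic_m^+$ with $\dist(0,\dsquare)\leq R$. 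By subadditivity of $\MCup$ and Corollary \ref{aug27.lemma1} (valid for $n$ large),
\[
\MCup(\gamma[s_k,s_k+2\cdot 2^{-n}])\leq \sum_{\dsquare} Z(\dsquare)\leq C_0\,m\,2^{-md/2}\leq 2^{-n\alpha_2 d/4}.
\]
If instead $\Im(\zeta)<2^{-m}$, then the entire segment lies in $\p_{m-1}\Half\cap\{|z|\leq R\}$, and Lemma \ref{aug27.lemma2} (applied with $u=1$) gives, for $n$ large,
\[
\MCup(\gamma[s_k,s_k+2\cdot 2^{-n}])\leq \MCup(\gamma\cap\p_{m-1}\Half\cap\{|z|\leq R\})\leq 2^{-(m-1)}\leq 2^{-n\alpha_2/2}.
\]
Taking $\alpha$ to be any positive number strictly less than $\min\{\alpha_2 d/4,\alpha_2/2\}$ yields the claim.

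The main obstacle is the second case: when $\gamma(s_k)$ is close to $\R$, the covering by squares in $\dyadic_m^+$ is unavailable since Corollary \ref{aug27.lemma1} is stated only for dyadic squares bounded away from the boundary. This is precisely why Lemma \ref{aug27.lemma2} was proved separately; once that is granted, the argument is a straightforward H\"older-plus-covering scheme with a union bound over the polynomially many dyadic times, the key point being that the exponent $\alpha_2 d/2$ gained from Corollary \ref{aug27.lemma1} comfortably beats the $O(2^n)$ union bound inside the deterministic part of the argument (since the bound in that corollary already holds a.s.\ simultaneously over all squares).
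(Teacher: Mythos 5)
Your proof is correct and follows essentially the same route as the paper: H\"older continuity of $\gamma$ in the capacity parametrization gives a diameter bound $2^{-n\alpha_2}$ for each increment, the segment is then covered by boundedly many dyadic squares at scale $m\approx n\alpha_2$, and the a.s.\ uniform bounds of Corollary \ref{aug27.lemma1} (interior squares) and Lemma \ref{aug27.lemma2} (squares near $\R$) finish the estimate. The only cosmetic issue is that in your first case the disk of radius $2^{-n\alpha_2}$ about $\zeta$ with $\Im(\zeta)\geq 2^{-m}$ can still dip below height $2^{-m}$ and so meet squares $\dsquare_m(j,0)\notin\dyadic_m^+$; this is harmless since that sliver is handled exactly as in your second case (as the paper does by allowing both types of squares in its cover).
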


\begin{pf} It is known \cite{LJ1,Lind} that for
$\kappa\neq8$, the
$\SLE_\kappa$ curve is H\"older continuous
with respect to the capacity parameterization.
That is to say, there exists $\beta= \beta_\kappa> 0$ such
that with probability one, if $t < \infty$,
then for $n$ sufficiently large, and all $0 \leq s \leq t$,
\[
\diam\bigl(\gamma\bigl[s,s+2^{-n}\bigr] \bigr) \leq
2^{-n \beta}.
\]
Let $m$ be the largest integer less than $\beta n$.
Then $\gamma[s,s + 2^{-n}]$ is contained
in the union of
four rectangles $\Gamma_1,\ldots,
\Gamma_4 \in{\mathcal Q}_m$. For\vspace*{1pt} $n$ sufficiently
large, if $\Gamma_j \in{\mathcal Q}_m^+$, then
Corollary~\ref{aug27lemma1} implies that
${\operatorname{Cont}_d^+}(\Gamma_j \cap\gamma) \leq m 2^{-dm/2}$.
If $\Gamma_j \in{\mathcal Q}_n \setminus{\mathcal Q}_n^+$,
then Lemma~\ref{aug27lemma2} implies that
${\operatorname{Cont}_d^+}(\Gamma_j) \leq c 2^{-m} $.
The result follows for $\alpha< \beta d/2$.
\end{pf}

In the remainder of this section, we prove (\ref{aug302}) which is
\[
\lim_{n \rightarrow\infty} {\operatorname{Cont}_d^+}\bigl[
\gamma[s+u,t] \cap\partial_n H_s\bigr] = 0.
\]

Let $U = U_{j,k}
= \{x+iy\dvtx  -2^k \leq x < 2^k, y \geq2^{-j} \}$.
Using Lemma~\ref{aug27lemma2} and compactness
of $\gamma[t,u]$, we see that
it suffices to prove that with probability
one for every $s <u$ and all positive integers $j,k$,
%
\begin{equation}
\label{aug281} \lim_{n \rightarrow\infty} {\operatorname{Cont}_d^+}\bigl[
\gamma[u,\infty) \cap\partial_n H_s \cap
U_{j,k} \bigr] = 0.
\end{equation}
It suffices to consider rational $s,u$, and hence
we need to show that for fixed $s,u,j, k$, (\ref{aug281})
holds with probability one. By scaling, it
suffices to prove this for $j=0$ which we now assume. So we have
\[
U = U_{0,k} = \bigl\{x+iy\dvtx  -2^k \leq x <
2^k, y \geq1 \bigr\}.
\]
We fix integer $k >0$ and allow
constants to depend on $k$. We only consider
$n \geq k + 4$. Let $U = U_{0,k} $,
and let
${\mathcal Q}_n(U)$ denote the set of $\Gamma\in{\mathcal Q}_n$
with $\Gamma\subset U$. Note
that
$ G(\Gamma) \leq c 2^{-2n}$ if
$ \Gamma\in{\mathcal Q}_n(U) $.

We will now define a quantity $\widehat Z(\Gamma)$
for $\Gamma\in{\mathcal Q}$ that is an upper bound
for the Minkowski content of the intersection of
the path with $\Gamma$ ``after it has gotten close to the
square and then gotten away from
the square.''
To be precise, suppose that $\Gamma
\in{\mathcal Q}_n$ with center point $z$, and
define the following quantities:
\begin{itemize}
\item$\xi_1 =\xi_1(\Gamma) $ is the\vspace*{1pt} first time $t$ such
that
$|z-\gamma(t)| = 2^{-n+3}$. If $\xi_1 < \infty$,
let $l = l(\Gamma)$ denote a subarc
of the circle of radius $2^{-n/2}$ about $z$ such that
$z$ is in the bounded component of $H_{\xi_1} \setminus
l$. See Section~\ref{twopointsec} where a particular such arc
$l$ was selected. To be specific, we will make that choice
here.
\item$\xi_2 =\xi_2(\Gamma) $ is the first time
$t > \xi_1$
such that $\gamma(t) \in\bar l$.
\item$\xi_3 =\xi_3(\Gamma) $ is the first time $t > \xi_1$ such that
$|z-\gamma(t)| = 2^{-n+1}$.
\item$\xi_4= \xi_4(\Gamma)$ is the first time $t > \xi_2$
such that $|z-\gamma(t)| = 2^{-n+1}$.
\end{itemize}
We think of time $\xi_4$ as the time of the ``second return''
to the (neighborhood of the) square, see Figure~\ref{fig2}.

\begin{figure} 

\includegraphics{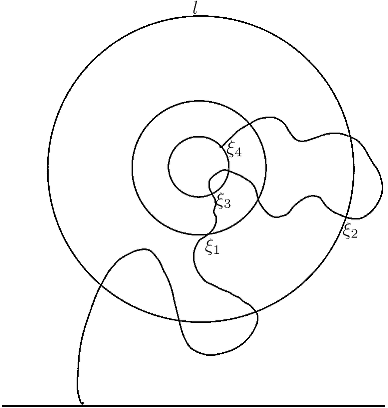}

\caption{The quantities
in Proposition~\protect\ref{holderprop} in the case $\xi_3 < \xi_2$.}\label{fig2}
\end{figure}

\begin{lemma} \label{lemma39}
There exists $n_0$ such that if $n \geq n_0$,
$\Gamma\in{\mathcal Q}_n$,
$\Gamma\cap U \neq\varnothing$, and $\xi_1 \leq s$,
then $\xi_2 < u$.
\end{lemma}

\begin{pf} The curve $\gamma$ is parameterized
so that $\hcap(\gamma[s_1,s_2] ) =
a(s_2 - s_1)$ where $\hcap$ is the
half-plane capacity which can be defined by
\[
\hcap(V) = \lim_{y \rightarrow\infty} y {\mathbb E}^{iy} \bigl[\Im
(B_\tau) \bigr],
\]
where $B_t$ is a standard Brownian motion and
$\tau= \tau_V = \inf\{t\dvtx  B_t \in{\mathbb{R}}\cup V\}$. In particular,
if $V_1 \subset V_2$,
\[
\hcap(V_2) - \hcap(V_1) \leq\lim_{y \rightarrow\infty}
y {\mathbb E}^{iy} \bigl[\Im(B_{\tau_2}); \tau_2 <
\tau_1 \bigr],\qquad \tau_j = \tau_{V_j}.
\]
Since the half-plane capacity is monotone,
\[
\hcap\bigl(\gamma[0,\xi_2] \bigr)\leq\hcap\bigl(\gamma[0,
\xi_1] \cup l \bigr).
\]
Using the Beurling estimate and the fact that $\Gamma\cap
U \neq\varnothing$, we can see that if $V_1 = \gamma(0,\xi_1],
V_2 = \gamma(0,\xi_1] \cap l$, then
\[
\lim_{y \rightarrow\infty} y {\mathbb E}^{iy} \bigl[
\Im(B_{\tau_2}); \tau_2 < \tau_1 \bigr] \leq\lim
_{y \rightarrow\infty} y {\mathbb P}^{iy} \{ \tau_2 <
\tau_1 \} \leq c \diam[l]^{1/2} \leq c 2^{-n/4}.
\]
If $n_0$ is chosen sufficiently large, then the right-hand side
is less than $u-s$ and hence $\xi_2 - \xi_1 < u-s$.
\end{pf}

We let
$E_1(\Gamma)$ be the event $\{\xi_1 < \xi_3 < \xi_2 <
\xi_4 < \infty\}$,
$E_2(\Gamma)$ the event
$\{\xi_1 < \xi_2 < \xi_3
= \xi_4
< \infty\}$, and
$E(\Gamma) = E_1(\Gamma) \cup E_2(\Gamma)
= \{ \xi_4< \infty\}$.
We define $\widehat Z(\Gamma)$
as follows:
\begin{eqnarray*}
\widehat Z(\Gamma) &=& 0\qquad\mbox{on the complement of } E(\Gamma),
\\
\widehat Z(\Gamma) &=& 2^{(n+1)(2-d)}\qquad\mbox{on the event } E_1(\Gamma),
\\
\widehat Z(\Gamma) &=& {\operatorname{Cont}_d^+}(\Gamma\cap\gamma; n
\log2)\qquad\mbox{on the event } E_2(\Gamma).
\end{eqnarray*}
Take $n_0$ as in the previous lemma and recall that
$U = U_{0,k}
= \{x+iy\dvtx  -2^k \leq x < 2^k, y \geq1 \}$. The definition is such that
the following holds:

\begin{itemize}
\item If $m > n_0$ and $\Gamma\in{\mathcal Q}_m$ with
$\Gamma\cap U \neq\varnothing$, then on the event
$E(\Gamma)$,
\[
{\operatorname{Cont}_d} \bigl[ \gamma[u,\infty) \cap\Gamma; m \log2 \bigr]
\leq\widehat Z(\Gamma).
\]
\item If $m \geq n >n_0$ and $\Gamma\in{\mathcal Q}_n$ with
$\Gamma\cap U \neq\varnothing$, then on the event
$E_2(\Gamma)$,
\[
{\operatorname{Cont}_d} \bigl[ \gamma[u,\infty) \cap\Gamma; m \log2 \bigr]
\leq\widehat Z(\Gamma).
\]
\end{itemize}

We will use the following fact which
states that once one gets close
to $z$ and then leaves, one is unlikely to return. It
is a quantitative
expression of the fact that the double points of $\SLE_\kappa$ curve
have strictly smaller fractal dimension than the curve itself. It is an
immediate
corollary of Corollary~\ref{hatrack}.

\begin{lemma} There exist $c,\beta$ such
that if $\Gamma\in{\mathcal Q}_n$
with $\Gamma\subset\{\Im(z) \geq1\}$,
then ${\mathbb P}[E(\Gamma)] \leq c 2^{n(d-2)} 2^{-n \beta}$.
\end{lemma}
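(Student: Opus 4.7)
The plan is to reduce the estimate to Corollary \ref{hatrack} by matching its parameters to the geometry of the $\xi_i(\dsquare)$. Let $z$ denote the center of $\dsquare \in \dyadic_n$, and set $r = (n-2) \log 2$ and $u = (n/2)/(n-2)$, so that $e^{-r} = 2^{-n+2}$ and $e^{-ur} = 2^{-n/2}$; for $n$ large, $u \in (1/2, 1]$. With these choices, $\tau_r(z) = \xi_1(\dsquare)$, the disk $\ball$ of the corollary has radius $2^{-n+2}$, and $\ball_u$ has radius $2^{-n/2}$. The hypothesis $|z| > e^{-r/2}$ holds automatically from $\Im z \geq 1$.

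The key step is the set inclusion
\[
E(\dsquare) \;\subseteq\; \bigl\{\tau_r < \infty,\ \gamma[\sigma,\infty) \cap \overline{\ball} \neq \emptyset\bigr\}.
\]
On $E(\dsquare) = \{\xi_4 < \infty\}$, clearly $\tau_r = \xi_1 < \infty$, and $\gamma(\xi_4) \in \overline{\ball}$ since $|\gamma(\xi_4) - z| = 2^{-n+1} < 2^{-n+2}$, so it suffices to show that $\sigma \leq \xi_4$. Once this is in place, Corollary \ref{hatrack} yields
\[
\Prob[E(\dsquare)] \;\leq\; c\, G(z)\, e^{r(d-2)}\, e^{\beta(u-1)r}.
\]
Since $\Im(z) \geq 1$ we have $G(z) \leq c$, and substituting $r \asymp n \log 2$ together with $u-1 \to -1/2$ gives $\Prob[E(\dsquare)] \leq c\, 2^{n(d-2)}\, 2^{-\beta' n}$ with $\beta' = \beta/2$, which is the claim after relabeling $\beta'$ as $\beta$.

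The main obstacle is the topological verification that $\sigma \leq \xi_4$ on $E(\dsquare)$. At time $\xi_2$ the curve lies on $\partial \ball_u$, hence outside the component $V_u$ of $\ball_u \cap H_{\tau_r}$ that contains $z$; at time $\xi_4$ the curve lies in $\ball \subset V_u$. Since $\gamma$ cannot cross $\gamma_{\tau_r} = \partial H_{\tau_r}$, the re-entry into $V_u$ between $\xi_2$ and $\xi_4$ must occur through $\partial V_u \cap H_{\tau_r}$, which is a disjoint union of arcs on $\partial \ball_u$. The delicate point is to argue that the re-entry must include a crossing of the specific arc $l_u$ separating $z$ from $\infty$ in $H_{\tau_r}$—that is, to rule out re-entry only through other arcs of $\partial V_u \cap \partial \ball_u$. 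Making this rigorous requires a careful examination of the components of $H_{\tau_r} \setminus l_u$ together with the non-crossing property of $\gamma$; once settled, the remainder of the proof is bookkeeping with the exponents.
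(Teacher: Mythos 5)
Your parameter matching ($r=(n-2)\log 2$, $u=(n/2)/(n-2)$, $G(z)\leq c$ for $\Im z\geq 1$) is correct and this is indeed the reduction the paper intends --- it dismisses the lemma as ``an immediate corollary of Corollary \ref{hatrack}.'' But the one step you defer, the inclusion $E(\dsquare)\subseteq\{\sigma\leq\xi_4\}$, is not merely delicate: as formulated it is false, so the gap you flag cannot be closed by ``careful examination'' alone. The point is the one the paper itself warns about in Section \ref{twopointsec}: $\p V_u\cap H_{\tau_r}$ is in general a union of \emph{several} arcs of $\p\ball_u$, only one of which is $l_u$, and the bounded component $W$ of $H_{\tau_r}\setminus l_u$ need not be contained in $\ball_u$. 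Concretely, the other arcs $l'\subset\p V_u\cap H_{\tau_r}$ lie in $W$ and open onto dead-end pockets (bounded components of $H_{\tau_r}\setminus l'$) that contain points just outside $\p\ball_u$. So after $\tau_r=\xi_1$ the curve can run out through such an arc $l'$, thereby reaching distance $2^{-n/2}$ from $z$ and triggering $\xi_2$, and then return through $l'$ into $V_u\supset\ball$ and trigger $\xi_4$, all while staying in $\overline W$ and never meeting $\overline{l_u}$. This happens already for simple curves ($\kappa\leq 4$): it only requires $\gamma_{\tau_r}$ to have crossed $\ball_u$ more than once before reaching $\p\ball$. On that event $\sigma>\xi_4$, and its probability is not negligible relative to the bound being proved, so it cannot be discarded.

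What is missing, therefore, is an estimate for $E(\dsquare)\cap\{\sigma>\xi_4\}$, i.e.\ for the return through a non-separating arc. This needs its own argument in the spirit of Lemmas \ref{inpart}--\ref{inpart2}: for instance, condition at time $\xi_2$ (when the tip sits on $\p\ball_u$), use the Beurling estimate to show that the excursion measure between $\p\ball$ and the tip in $H_{\xi_2}$ is $O(e^{(u-1)r/2})$, and then run the same crosscut/summation (for $\kappa\leq 4$) or separation-from-infinity (for $4<\kappa<8$) argument used to prove Corollary \ref{hatrack}, now with the arc actually used for the exit in place of $l_u$. Alternatively one can decompose on $\dist(z,\gamma_{\xi_2})$ and combine the sharp one-point estimate of Proposition \ref{onesided} with a Beurling bound on $S_{H_{\xi_2}}(z;\gamma(\xi_2),\infty)$. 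Either way, the content of the lemma lives precisely in the step you left open, and the exponent bookkeeping you did carry out is the easy part.
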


Arguing as in the proof of Lemma~\ref{aug24lemma1},
we have on the event $E_2(\Gamma)$,
\[
{\mathbb E}\bigl[\widehat Z(\Gamma) \mid\gamma_{\xi_4}\bigr] \leq c
2^{n(2-d)}.
\]
Then we see that
\begin{eqnarray*}
{\mathbb E}\bigl[\widehat Z(\Gamma) \mid\xi_1( \Gamma) < \infty\bigr]
&\leq& c 2^{n(d-2)},
\\
{\mathbb E}\bigl[\widehat Z(\Gamma)\bigr] &\leq& {\mathbb P}\bigl\{\xi_1(
\Gamma) < \infty\bigr\} {\mathbb E}\bigl[\widehat Z(\Gamma) \mid\xi_1(
\Gamma) < \infty\bigr] \leq c G(\Gamma) 2^{-n \beta}.
\end{eqnarray*}
Let
\[
\widehat Z_n =\widehat Z_n(U) = \sum
_{m=n}^\infty\sum_{\Gamma
\in{\mathcal Q}_m, \Gamma\subset U}
\widehat Z(\Gamma).
\]
Then ${\mathbb E}[\widehat Z_n] \leq c 2^{-\beta n}$,
and hence using the Borel--Cantelli lemma, with
probability one for all $n$ sufficiently large, $\widehat Z_n \leq
2^{-\beta n/2} $. To establish (\ref{aug281}), it therefore suffices
to show
that there exists $c$ such that for $n > n_0$,
\[
{\operatorname{Cont}_d^+}\bigl[ \bigl(\gamma\setminus[
\gamma_u \cup\partial_nH_s] \bigr) \cap U
\bigr] \leq c \widehat Z_n.
\]
To show this it suffices to show for all integers $m \geq n$
\[
{\operatorname{Cont}_d}\bigl[ \bigl(\gamma\setminus[
\gamma_u \cup\partial_nH_s] \bigr) \cap U;m
\log2 \bigr] \leq c \widehat Z_n.
\]
For each $m \geq n > n_0$, will cover $\gamma\setminus[\gamma_u \cup
\partial_nH_s]$ by squares
$\Gamma\in{\mathcal Q}_j$ with \mbox{$n \leq j \leq m$}. We will choose all
squares $\Gamma\in{\mathcal Q}_m$ that intersect $H_s$
and are within distance $2^{-m+2}$ of $\partial H_s$. This includes
squares that intersect $\partial H_s$. However, for $n \leq j <m$, we
only choose squares whose distance from $\partial H_s$ is comparable
to~$2^{-j}$. In particular, these squares do not intersect $\partial H_s$.

To be precise, let $s < u $ and
assume that $n > n_0$. For fixed $ n < m$, let ${\mathcal A}
= {\mathcal A}_{s,m,n}$ denote the set of $\Gamma
\in{\mathcal Q}_j(U), j=n,\ldots,m $, that satisfy
$ 2^{-j+1} \leq\dist(\Gamma, \partial H_s)
\leq2^{-j+ 3 } $. Let ${\mathcal C} = {\mathcal C}_{s,m}$
denote the set of $\Gamma
\in{\mathcal Q}_m(U)$ that satisfy $\dist(\Gamma,
\partial H_s) \leq2^{-m+2}$. We claim that for each $m$,
the squares in
${\mathcal A} \cup{\mathcal C} $ cover $
U \cap\partial_n H_s$. To see this, suppose that $z \in U \cap
\partial_n H_s$. Then $\dist(z,\partial H_s) \leq2^{-n}$.
If $\dist(z, \partial H_s) \leq2^{-m+2}$, then the unique
$\Gamma\in{\mathcal Q}_m(U)$ containing $z$ is
in ${\mathcal C}$. If $\dist(z, \partial H_s) >
2^{-m+2}$ find
$j$ such that
$ 2^{-j+2} < \dist(z, \partial H_s) \leq2^{-j+3}$. Let
$\Gamma$ be the unique square in ${\mathcal Q}_j(U)$ that
contains $z$ and note that $2^{-j+1} \leq\dist(\Gamma, \partial_n H_s)
\leq2^{-j+3}$.

If
$\Gamma\in{\mathcal A} \cup{\mathcal C}$,
then $\xi_1 \leq s$. Since $n > n_0$, by Lemma~\ref{lemma39}
$\xi_2 < u$. Hence,
$\gamma[u,\infty) \cap\Gamma\subset
\gamma[\xi_4,\infty)$.
If $\Gamma\in{\mathcal A}$
with $\gamma[u,\infty) \cap\Gamma\neq\varnothing$,
then $\xi_2 < \xi_3$ which means that the
event $E_2(\Gamma)$ has occurred. If $\Gamma
\in{\mathcal C}$ and
$\gamma[u,\infty) \cap\Gamma\neq\varnothing$, we know
that $E(\Gamma)$ has occurred. Either way, we see that
\[
{\operatorname{Cont}_d}\bigl[\gamma[u,\infty) \cap\Gamma; m \log2 \bigr] \leq
\widehat Z_n (\Gamma).
\]
%

\section{Proof of Theorem \texorpdfstring{\protect\ref{keyestimate}}{3.2}}

Throughout this section, $0 < \delta\leq1/10$,
but constants are independent of $\delta$.

\subsection{Some reductions}

Suppose $\Im(z),\Im(w) \geq1$, and
let
$J_r(z), Q_r(z) = Q_r^{ \delta}
(z) $ as in Section~\ref{mainsec}.
Since $Q_r(z) Q_r(w) = 0$ if
$\tau_{r}(z) = \infty$ or $\tau_{r}(w) =
\infty$, in order to prove (\ref{maines}) it suffices by symmetry
to prove that
%
\begin{equation}
\label{mainesalt3} {\mathbb E} \bigl[Q_r(z) Q_r(w);
\tau_{r}(z) < \tau_{r}(w)< \infty\bigr] \leq c
e^{-\beta r} |z-w|^{\beta- 2}.
\end{equation}
By
(\ref{sep120}), we
know that if $|z-w| \leq e^{-ur}$,
\begin{eqnarray*}
{\mathbb E} \bigl[Q_r(z) Q_r(w) \bigr] & \leq& c
e^{2r(2-d)} {\mathbb P}\bigl\{\tau_r(z) < \infty,
\tau_r(w) < \infty\bigr\}
\\
& \leq& c |z-w|^{d-2}
\\
& \leq& c |z-w|^{(d-1) - 2} e^{-ur}.
\end{eqnarray*}
Hence, it suffices to find $u > 0$ and $c,\beta$ such that
(\ref{mainesalt3}) holds for $|z-w| \geq e^{-ur}$.
Let $\alpha$ be as in (\ref{aug205}), and
suppose that $s > 0$. Choose $u > 0$ with
$ u[2(2-d) + \alpha] \leq\alpha s/2$.

Then if $|z-w| \geq e^{-ur}$,
\begin{eqnarray*}
&& {\mathbb E} \bigl[Q_r(z) Q_r(w);
\tau_{sr}(w) \leq\tau_{r}(z) \leq\tau_{r}(w)<
\infty\bigr]
\\
&&\qquad \leq c e^{2r(2-d)} {\mathbb P}\bigl\{\tau_{sr}(w) \leq
\tau_{r}(z) \leq\tau_{r}(w)< \infty\bigr\}
\\
&&\qquad \leq c e^{2r(2-d)} {\mathbb P}\bigl\{\tau_{sr- ur + ur}(w)
\leq
\tau_{r
-ur + ur}(z) \leq\tau_{r-ur+ur}(w) < \infty\bigr\}
\\
&&\qquad \leq c e^{2r(2-d)} e^{2(r-ur)(d-2)} e^{-\alpha(sr-ur)}
\\
&&\qquad \leq c e^{ru[2(2-d) + \alpha]} e^{- \alpha sr} \leq c
e^{-\alpha s r/2}.
\end{eqnarray*}
From this, we see that in order to prove (\ref{mainesalt3}) it
suffices to prove the following.
There exist $u>0, s > 0, \beta> 0, c < \infty$
such that if $\Im(z), \Im(w) \geq1$ and $|z-w| \geq e^{-ur}$, then
%
\begin{equation}
\label{mainesalt} {\mathbb E} \bigl[Q_r(z) Q_r(w);
\tau_{r}(z) < \tau_{sr}(w) < \tau_{r}(w) <
\infty\bigr] \leq c e^{-\beta r} |z-w|^{\beta- 2}.
\end{equation}
This is what we will establish in this section.

\subsection{One-point estimate} \label{onepointsec}

As is often the case, an important step in getting a two-point
estimate is to get a sharp one-point estimate with good
control on the error terms. Much of the necessary analysis
has been done for $\SLE$, and we review
some of the methods here.

We will consider chordal $\SLE_\kappa$ from $1$ to
$w = e^{2i\theta}$ in the unit disk ${\mathbb D}$, and we will study
how close the path gets to the origin.
We parameterize the $\SLE_\kappa$ path
$\gamma$
using the radial parameterization. To be specific,
we let $D_t$ denote the component of ${\mathbb D}
\setminus\gamma_t$ containing the origin and $g_t\dvtx  D_t
\rightarrow{\mathbb D}$ the unique conformal transformation with
$g_t(0) = 0, g_t(\gamma(t)) = 1$. The radial parameterization
is defined so that
$|g_t'(0)| = e^t$. The total lifetime of the curve
in this parameterization, $T$, is
finite with probability one. (If $\kappa
> 4$, $T$ is not the time that the curve reaches $w$, but rather the
time at which the curve disconnects the origin
from $w$. Although the $\SLE$ curves continues after this time, the
domain $D_t$ does not change so we do not need to consider the path
after time $T$.) We write $w_t = e^{2i\theta_t}=
g_t(w)$. The path~$\gamma_t$, and hence the transformations
$g_t$, are determined by $\theta_s, 0 \leq s \leq t$.
If $t > T$, then $g_t = g_T$.
We let ${\mathbb P}_\theta,{\mathbb E}_\theta$ denote probabilities
and expectations given by chordal $\SLE_\kappa$ from $1$ to
$e^{2i\theta}$. The angle $\theta_t$
satisfies a simple one-dimensional SDE.
Its form is a little nicer if we consider a
linear time change. If $\hat\theta_t = \theta_{2at}$, then $\hat
\theta_t$
satisfies the ``radial Bessel equation''
\[
d\hat\theta_t = (1-2a) \cot\hat\theta_t \,dt + dB_t,
\]
where $B_t$ is a standard Brownian motion. This equation is valid
until the time $\widehat T = T/2a$ at which $\hat\theta_{\widehat T}
= \theta_T \in\{0,\pi\}$.

The Koebe $(1/4)$-theorem and Schwarz lemma implies
that for $0
\leq t \leq T$,
%
\begin{equation}
\label{koebe} e^{-t -\log4} \leq\dist[0,\gamma_t] \leq
e^{-t}.
\end{equation}
Let $S_t = S_{D_t}(0;w,\gamma(t)) = S_{{\mathbb D}}(0;w_t,1) =
\sin\theta_t $. It\^o's formula shows
that
\[
M_t = 1\{T > t\} e^{t(2-d)} S_t^{4a-1}
\]
is a local martingale; more precisely, $\widehat M_t
= M_{2at}$ satisfies
\[
d \widehat M_t = (4a-1) [\cot\hat\theta_t] \widehat
M_t \,dB_t,\qquad t < \widehat T.
\]
In fact, $M_t$ is a continuous martingale with
${\mathbb P}\{M_T = 0\} = 1$.

Let ${\mathbb D}_r$ denote the open disk of radius $e^{-r}$ about the
origin with closure $\overline{\mathbb D}_r$, and
\[
\tau_r = \inf\bigl\{t\dvtx  \dist[0,\gamma_t] =
e^{-r} \bigr\} = \inf\bigl\{t\dvtx  \gamma(t) \in\overline{\mathbb
D}_r\bigr\}.
\]
Note that (\ref{koebe}) implies that
\[
r - 2 < r - \log4 \leq\tau_r \leq r.
\]

The measure obtained by tilting by the martingale $M_t$ is
called \textit{two-sided radial $\SLE_\kappa$} (\textit{from $0$ to $e^{2i\theta}$
in ${\mathbb D}$
going through the origin stopped when it reaches
the origin}). We will
write ${\mathbb P}^*,{\mathbb E}^*$ for probabilities and expectations with
respect to this measure. These measures depend on the
initial angle $\theta$ and we will write ${\mathbb P}^*_\theta,{\mathbb
E}^*_\theta$
if we wish to make this explicit.
The quantity ${\mathbb E}^*_\theta$ is defined by saying that
if $X$ is a random variable
that depends
only on $\gamma_t$, then
\[
{\mathbb E}^*_\theta(X) = M_0^{-1} {\mathbb
E}_\theta[X M_t ] = [\sin\theta]^{1-4a}
e^{t(2-d)} {\mathbb E}_\theta\bigl[X S_t^{4a-1}
1\{T > t\} \bigr],
\]
or equivalently,
%
\begin{equation}
\label{aug84} {\mathbb E}_\theta\bigl[X 1\{T > t\} \bigr] =
e^{(d-2)t} [\sin\theta]^{4a-1} {\mathbb E}^*_\theta
\bigl[X S_t^{1-4a} \bigr].
\end{equation}

The Girsanov theorem shows that
under the measure ${\mathbb E}^*_\theta$,
%
\begin{equation}
\label{radbes} d \hat\theta_t = 2a \cot\hat\theta_t \,dt +
d W_t,
\end{equation}
where, as before, $\hat\theta_t = \theta_{2at}$
and $W_t$ is a standard Brownian motion with respect to
the tilted measure.
This equation has an invariant probability density
\[
\phi(\theta) = C_{4a}^{-1} [\sin\theta]^{4a},\qquad
C_{4a} = \int_0^{\pi}
\sin^{4a} \theta \,d\theta.
\]
Moreover, the rate of convergence to equilibrium
is exponential (see, e.g., \cite{LZ}, Section~2.1.1). To be
more explicit, there exists $\alpha> 0$ such
that if $\phi_t(\theta; \theta_0)$ is the density at
time $t$ given initial condition $\theta_0$, then
%
\begin{equation}
\label{aug91} \phi_t(\theta;\theta_0) = \phi(\theta)
\bigl[1 + O\bigl(e^{- \alpha t}\bigr)\bigr].
\end{equation}
Implicit in this formulation is the fact that
for every $t_0 >0$ there exists $C = C(t_0) < \infty$
such that if $t \geq t_0$,
$C^{-1} \phi(\theta) \leq\phi_t(\theta;\theta_0) \leq
C \phi(\theta)$.

If we apply this to (\ref{aug84}) with $X \equiv1$, we get
\[
{\mathbb P}_\theta\{T > t\} = c_* e^{(d-2)t} [\sin\theta]
^{4a-1} \bigl[1+O\bigl(e^{-\alpha
t}\bigr)\bigr],
\]
where
\[
c_* = \int_0^\pi[\sin\theta]^{1-4a}
\phi(\theta) \,d\theta= 2 C_{4a}^{-1}.
\]
In particular, we see that for $r\geq1/10$,
\[
{\mathbb P}_\theta\{T > \tau_r\} \asymp[\sin
\theta]^{4a-1} e^{(d-2)r},
\]
and if $r \geq3, 0\leq s \leq1/10$, and $T > r-2$,
conformal invariance implies that
\begin{eqnarray*}
{\mathbb P}_\theta\{ T > \tau_{r+s} \neq\varnothing\mid
\gamma_{r-2}\} & = & {\mathbb P}_{\theta_{r-2}} \bigl\{ \gamma\cap
g_{r-2}({\mathbb D}_{r+s}) \neq\varnothing\bigr\}
\\
& \asymp& [\sin\theta_{r-2}]^{4a-1}.
\end{eqnarray*}

We can also phrase this in terms of the quasi-stationary distribution
for $\theta_t$. Let $\psi(\theta) = \frac{1}2 \sin\theta$. Under the
measure ${\mathbb P}$, the random variable
$\theta_t 1\{T > t\}$ has an atom at $0$ and has a density
$\psi_t(\theta)$ for $0 < \theta< \pi$ satisfying
\[
{\mathbb P}\{T > t \} = \int_0^\pi
\psi_t(\theta) \,d \theta.
\]
The results of the previous paragraph show
that $\psi$ is a quasi-stationary
density in the sense that if $\phi_0 \equiv\psi$, then
\[
\psi_t(\theta) = e^{t(d-2)} \psi(\theta).
\]
Moreover, if $\psi_t(\theta;\theta_0)$ denotes the density assuming
initial condition $\theta_0$,
%
\begin{eqnarray}\label{station}
\psi_t(\theta;\theta_0) &=& {\mathbb
P}_{\theta_0}\{T > t\} \psi(\theta) \bigl[1 + O\bigl(e^{-t \alpha}\bigr)
\bigr]
\nonumber\\[-8pt]\\[-8pt]
&=& c_* e^{t(d-2)} \psi(\theta) \bigl[1 + O\bigl(e^{-t \alpha}
\bigr)\bigr].\nonumber
\end{eqnarray}
We write ${\mathbb P}_\psi$ for probabilities assuming the
initial density $\psi$. We can see
\[
{\mathbb P}_\psi\{T>r\}=e^{r(d-2)}.
\]

\begin{proposition} \label{jan26prop1}
There exists $0 < c_1 < \infty$ such that
\[
{\mathbb P}_\psi\{\tau_r < \infty\} = c_1
e^{r(d-2)} \bigl[ 1 + O\bigl(e^{-r}\bigr)\bigr].
\]
\end{proposition}

\begin{pf} If $r>0, u > 2$, then since $\tau_r > r-2$,
\[
{\mathbb P}_\psi\{\tau_{r+u} < \infty\} = {\mathbb
P}_\psi\{T > r \} {\mathbb P}_\psi\{\tau_{r+u} <
\infty\mid T > r\}.
\]
The conformal Markov property implies that
if $T > r$, then
\[
{\mathbb P}_\psi\{\tau_{r+u} < \infty\mid
\gamma_r\} = {\mathbb P}_{\theta_r} \bigl\{\gamma[0,\infty)
\cap g_r({\mathbb D}_{r+u}) \neq\varnothing\bigr\},
\]
where $\theta_r$ started according to $\psi$. By Lemma~\ref{growth},
there exists $u_0$ such that if
$u > u_0$, then on the event $T > r$, if $|z| = e^{-r-u}$,
\[
e^{-u} \exp\bigl\{-4 e^{-u} \bigr\} \leq\bigl|g_r(z)\bigr|
\leq e^{-u} \exp\bigl\{ 4 e^{-u} \bigr\}.
\]
Combining the last two expressions, we see that if $r > 0$
and $u > u_0$, then
if $T > r$,
%
\begin{eqnarray}\label{jan261}
{\mathbb P}_{\theta_r} \{\tau_{u + 4e^{-u}} < \infty\}
&\leq& {\mathbb P}_{\theta_r} \bigl\{\gamma[0,\infty) \cap g_r({
\mathbb D}_{r+u}) \neq\varnothing\bigr\}
\nonumber\\[-8pt]\\[-8pt]
&\leq& {\mathbb P}_{\theta_r}
\{\tau_{u - 4e^{-u}} < \infty\}.\nonumber
\end{eqnarray}
Since $\psi$ is the quasi-stationary density, the conditional
density on $\theta_r$ given $T > r$ is $\psi$. Therefore,
\[
e^{r(d-2)} {\mathbb P}_\psi\{\tau_{u + 4e^{-u}} < \infty\}
\leq{\mathbb P}_\psi\{\tau_{r+u} < \infty\} \leq
e^{r(d-2)} {\mathbb P}_\psi\{\tau_{u - 4e^{-u}} < \infty\}.
\]
If we replace $r$ with $s = r - 5e^{-u}$ and $u$ with
$v = u + 5e^{- u}$, we get for $u$ sufficiently large so
that $e^{-v} \geq(4/5) e^{-u}$,
\begin{eqnarray*}
{\mathbb P}_\psi\{\tau_{r+u} < \infty\} & = & {\mathbb
P}_\psi\{\tau_{s + v} < \infty\}
\\
& \leq& e^{s(d-2)} {\mathbb P}_\psi\{\tau_{v -4 e^{-v} }<
\infty\}
\\
& \leq& e^{s(d-2)} {\mathbb P}_\psi\{\tau_{u }<
\infty\}
\\
& = & e^{r(d-2)} {\mathbb P}_\psi\{\tau_{u }< \infty
\} \bigl[1 + O\bigl(e^{-u}\bigr)\bigr].
\end{eqnarray*}
We get a bound in the other direction by choosing $s = r+5e^{-u}$
and $v = r = e^{-5u}$. Hence,
\[
{\mathbb P}_\psi\{\tau_{r+u} < \infty\} = e^{r(d-2)}
{\mathbb P}_\psi\{\tau_u < \infty\} \bigl[ 1 + O
\bigl(e^{-u}\bigr)\bigr],
\]
where the error term is bounded uniformly independent of $r$.
If we define
$L_r = \log[e^{r(2-d)} {\mathbb P}_\psi\{\tau_{r} < \infty\}]$,
then the above expression can be written as
\[
\sup_{r \geq u} |L_r - L_u| = O
\bigl(e^{-u}\bigr),
\]
which\vspace*{1pt} implies that the limit $ L_\infty= \lim_{u \rightarrow\infty}
L_u \in(-\infty, \infty)$ exists, and $|L_u - L_\infty| = O(e^{-u})$.
The proposition follows with $c_1= e^{L_\infty}$.
\end{pf}

\begin{theorem} \label{lime}
There exists $0 < \hat c < \infty$ and $\beta> 0$, such
that
%
\begin{equation}
\label{jan265} {\mathbb P}_\theta\{\tau_r < \infty\} =
\hat c [\sin\theta]^{4a-1} e^{r(d-2)} \bigl[1+ O
\bigl(e^{-r \beta}\bigr)\bigr].
\end{equation}
\end{theorem}

\begin{pf}
As in (\ref{jan261}),
\[
{\mathbb P}_{\theta_r} \{\tau_{r+ 4e^{-r}} < \infty\}\leq{\mathbb
P}_{\theta_r} \bigl\{\gamma[0,\infty) \cap g_r({\mathbb
D}_{2r}) \neq\varnothing\bigr\} \leq{\mathbb P}_{\theta_r} \{
\tau_{r - 4e^{-r}} < \infty\}.
\]
By Proposition~\ref{jan26prop1}, if $\psi$ is the invariant distribution,
\[
{\mathbb P}_{\psi} \{\tau_{r \pm4e^{-r}} < \infty\} = c_1
e^{r(d-2)} \bigl[1 + O\bigl(e^{-r}\bigr)\bigr].
\]
Combining this with (\ref{station}), we see that
\begin{eqnarray*}
{\mathbb P}_{\theta} \{\tau_{2r} < \infty\} & = & {\mathbb
P}_{\theta} \{T > r \} {\mathbb P}_\theta\{\tau_{2r} <
\infty\mid T > r\}
\\
& = & c_1 c_* e^{2r(d-2)} \bigl[1 + O\bigl(e^{-\alpha r}
\bigr)\bigr].
\end{eqnarray*}\upqed
\end{pf}

With this theorem we could \textit{define} the chordal Green's function
on ${\mathbb D}$ by
\[
G_{\mathbb D}\bigl(0;1,e^{2i\theta}\bigr) = \hat c [\sin
\theta]^{4a-1},
\]
and define it for other simply connected domains by
\[
G_D(z;w_1,w_2) = \bigl|f'(z)\bigr|^{2-d}
G_{\mathbb D}\bigl(0;1,e^{2i\theta}\bigr),
\]
where $f\dvtx  D \rightarrow{\mathbb D}$ is a conformal transformation with
$f(z) = 0$, $f(w_1) = 1$, $f(w_2) = e^{2i\theta}$. In fact,
\[
G_D(z;w_1,w_2) = \hat c
\crad_D(z)^{d-2} S_D(z;w_1,w_2)^{4a-1}.
\]

\begin{proposition} \label{onesided}
Let $0 < \kappa< 8$.
There exists $c < \infty, \alpha> 0$ such that
the following is true. Suppose that $D$ is a simply connected domain and
$\gamma$ is a chordal $\SLE_\kappa$ path from $w_1$ to
$w_2$ in $D$. Suppose that $z \in D$, $R = \dist(z,\partial D)$ and
$G =
G_D(z;w_1,w_2)$.
Then if $e^{-r} \leq R/2$,
\[
\bigl\llvert G ^{-1} e^{r(2-d)} {\mathbb P} \bigl\{\dist(
\gamma,z) \leq e^{-r} \bigr\} - 1 \bigr\rrvert\leq c
\bigl[e^{- r}/R\bigr]^\alpha.
\]
In particular, there exists $c < \infty$ such
that if $0 < r < s$, then
%
\begin{eqnarray}\label{jan281}
&& \bigl\llvert{\mathbb P} \bigl\{\dist(\gamma,z) \leq
e^{-r} \bigr\} - e^{(s-r)(2-d)}{\mathbb P} \bigl\{\dist(\gamma,z)
\leq e^{-s} \bigr\} \bigr\rrvert
\nonumber\\[-8pt]\\[-8pt]
&&\qquad\leq c \bigl[e^{- r}/R\bigr]^{2-d+\alpha}.\nonumber
\end{eqnarray}
\end{proposition}

\begin{pf}
Without loss of generality, we assume $z=0$, and by scaling
we may assume that $R = 1$. Let
$F\dvtx D \rightarrow{\mathbb D}$ be the conformal transformation
with $F(0) = 0, F(w_1) = 1, F(w_2) = e^{2i\theta}$ where
$\sin\theta= S_D(z;w_1,w_2)$. The Schwarz
lemma and Koebe $(1/4)$-theorem imply that
$1/4 \leq|F'(0)| \leq1$. Note that $G
= \hat c |F'(0)|^{2-d} [\sin\theta]^{4a-1}$. Proposition
\ref{growth} implies that there exists universal $r_0$
such that if $r > r_0$,
\[
\bigl|F'(0)\bigr| |z| \exp\bigl\{-4|z|\bigr\} \leq\bigl|F(z)\bigr| \leq\bigl|F'(0)\bigr| |z|
\exp\bigl\{4 |z|\bigr\}.
\]
Therefore, by conformal invariance, if $q = -\log|F'(0)|$
\[
{\mathbb P}_{\theta} \{\tau_{r+ 4 e^{-r}+ q} < \infty\} \leq{\mathbb P}
\bigl\{\dist(\gamma,z) \leq e^{-r} \bigr\} \leq{\mathbb
P}_{\theta} \{\tau_{r- 4 e^{-r}+ q} < \infty\}.
\]
But (\ref{jan265}) tells us that
\begin{eqnarray*}
{\mathbb P}_{\theta} \{\tau_{r \pm4 e^{-r}+ q} < \infty\} & = & \hat c
[\sin
\theta]^{4a-1} e^{(r+q)(d-2)} \bigl[1 + O\bigl(e^{-\alpha r}\bigr)
\bigr]
\\
& = & G e^{(r+q)(d-2)} \bigl[1 + O\bigl(e^{-\alpha r}\bigr)\bigr].
\end{eqnarray*}\upqed
\end{pf}

With these results, we can
follow the proof in \cite{LW}, Section~3, which proves the
corresponding result
with distance replaced by conformal radius, to conclude~(\ref{twogreen}). We need
to replace
Lemma 2.16 of \cite{LW}, with the corresponding
result for the distance. The necessary lemma, written
in the notation of this paper, is the following.

\begin{lemma} \label{july}
There exist $\alpha> 0$, $c < \infty$ such that if
$0 < s < u < 1$ and $r \geq3$,
\[
{\mathbb P}_\theta\bigl\{\tau_r < \infty, \gamma(
\tau_{ur},\tau_r) \not\subset{\mathbb D}_{sr}
\bigr\} \leq c [\sin\theta]^{4a-1} e^{r(d-2)} e^{-\alpha tr},
\]
where $t = \min\{1-u, u-s\}$.
\end{lemma}

\begin{pf} A corresponding result was proved for two-sided
radial $\SLE_\kappa$ in \cite{Law3}. In particular, there exist
$c,\alpha$ such that
\[
{\mathbb P}_\theta^* \bigl\{\gamma(\tau_{ur},
\tau_{r}) \not\subset{\mathbb D}_{sr} \bigr\} \leq c
e^{\alpha( s-u)r}.
\]
In particular, since $\tau_r > r-2$,
\[
{\mathbb P}_\theta^* \bigl\{\gamma(\tau_{ur},r-2) \not
\subset{\mathbb D}_{sr} \bigr\} \leq c e^{\alpha( s-u)r}.
\]
Using the definition of the measure ${\mathbb P}_\theta^*$ we see
that this implies that
\begin{eqnarray*}
&& {\mathbb E}_\theta\bigl[[\sin\theta_{ {r-2}}]^{1-4a};
T > r-2, \gamma(\tau_{ur}, r-2) \not\subset{\mathbb D}_{sr}
\bigr]
\\
&&\qquad\leq c [\sin\theta]^{4a-1} e^{r(d-2)}
e^{\alpha( s-u)r}.
\end{eqnarray*}
However, if $T > r-2$,
${\mathbb P}\{\tau_r < \infty\mid\gamma_{r-2}\}
\asymp[\sin\theta_{ {r-2}}]^{4a-1}$. Hence,
%
\begin{equation}
\label{jan291} {\mathbb P}_\theta\bigl\{ \tau_{r} <
\infty, \gamma(\tau_{ur},r-2) \not\subset{\mathbb D}_{sr}
\bigr\} \leq c [\sin\theta]^{4a-1} e^{r(d-2)} e^{\alpha( s-u)r}.
\end{equation}
On the event $E:= \{ T > r-2, \gamma(\tau_{ur},{r-2})
\subset{\mathbb D}_{sr} \} $, topological considerations
(see \cite{Law3}, Lemma 2.3) imply that there is a unique subarc
$l$ of $\partial{\mathbb D}_{ur} \cap D_{{r-2}}$ such that
removal of $l$ disconnects $0$ from $\partial{\mathbb D}_{sr}$
in $D_{r-2}$. The point $\gamma(r-2)$ may be in
$l$ or in
either of the connected components of $D_{r-2} \setminus l$.
In any of these cases,
if $\sigma= \inf\{t \geq r-2\dvtx  \gamma(t)
\in\bar l\}$, then
the event $E \cap\{\tau_r < \infty,
\gamma(\tau_{ur}, \tau_r) \not\subset{\mathbb D}_{sr}\}$
is contained in the event $E \cap\{\sigma< \tau_r < \infty\}$.
As in (\ref{inparticular}), on the event $E \cap\{\sigma< \infty,
\sigma< \tau_r\}$,
\[
{\mathbb P}\{\tau_r < \infty\mid\gamma_\sigma\} \leq c
e^{\alpha(u-1)r}.
\]
Hence,
%
\begin{eqnarray}\label{jan292}
&& {\mathbb P}_\theta\bigl\{ \tau_r < \infty,
\gamma(\tau_{ur},\tau_{r}) \not\subset{\mathbb
D}_{sr}, \gamma(\tau_{ur},r-2) \subset{\mathbb
D}_{sr} \bigr\} \nonumber
\\
&&\qquad \leq {\mathbb P}_\theta(E) {\mathbb P}_\theta\bigl\{
\tau_r < \infty,\gamma(\tau_{ur},\tau_{r})
\not\subset{\mathbb D}_{sr} \mid E\bigr\}
\nonumber\\[-8pt]\\[-8pt]
&&\qquad\leq{\mathbb P}_\theta\{T > r-2\} {\mathbb P}_\theta\bigl
\{\tau_r < \infty,\gamma(\tau_{ur},\tau_{r})
\not\subset{\mathbb D}_{sr} \mid E\bigr\}\nonumber
\\
&&\qquad\leq c [\sin\theta]^{4a-1} e^{r(d-2)} e^{\alpha
r(u-1)}.\nonumber
\end{eqnarray}
The lemma follows from (\ref{jan291}) and (\ref{jan292}).
\end{pf}

The proof of (\ref{twogreen}) follows
that in \cite{LW}, Section~3. We will not give all the
details, but we sketch the argument using the notation
of this paper. We need to prove the existence of the
limit
\[
G(z,w) = \lim_{r,s \rightarrow\infty} e^{r(2-d)} e^{s(2-d)} {
\mathbb P}\bigl\{\tau_r(z), \tau_s(w) < \infty\bigr\}.
\]

\begin{proposition} For every $z,w \in{\mathbb H}$,
\[
\lim_{r,s \rightarrow\infty} e^{r(2-d)} e^{s(2-d)} {\mathbb P}\bigl
\{\tau_r(z) < \tau_s(w) < \infty\bigr\} = G(z) {\mathbb
E}^* \bigl[G_{H_T}(w;z,\infty) \bigr],
\]
where ${\mathbb E}^*$ denotes expectation with respect
to two-sided radial SLE to $z$.
\end{proposition}

\begin{pf*}{Proof (sketch)}
Let $\tau_r = \tau_r(z) $.
Arguing as in (\ref{aug205}), we see that
\[
\lim_{r,s \rightarrow\infty} e^{r(2-d)} e^{s(2-d)} {\mathbb P}\bigl
\{\tau_{s/2}(w) < \tau_r < \tau_s(w) < \infty
\bigr\} = 0.
\]
Also, by (\ref{aug205}) and Proposition~\ref{onesided},
\begin{eqnarray*}
\lim_{r,s \rightarrow\infty} e^{r(2-d)} {\mathbb P}\bigl\{
\tau_r < \tau_{s/2}(w) \bigr\} = \lim_{r \rightarrow\infty}
e^{r(2-d)} {\mathbb P}\{ \tau_r < \infty\}
&=& G(z).
\end{eqnarray*}
By Proposition~\ref{onesided}, there exists $\alpha$
such that if $\tau< \tau_{s/2}(w)$,
\[
{\mathbb P}\bigl\{ \tau_s(w) < \infty\mid\gamma_{\tau_r}\bigr
\} = e^{s(d-2)} G_{ H_{\tau_r}}\bigl(w;\gamma(\tau_r),
\infty\bigr) \bigl[1 + O\bigl(e^{-\alpha s}\bigr)\bigr].
\]
Therefore,
\begin{eqnarray*}
&& \lim_{r,s \rightarrow\infty} e^{r(2-d)} e^{s(2-d)}
G(z)^{-1} {\mathbb P}\bigl\{\tau_r <
\tau_s(w) < \infty\bigr\}
\\
&&\qquad=
\lim_{r,s \rightarrow\infty} {\mathbb E} \bigl[ G_{ H_{\tau_r}}
\bigl(w;\gamma(\tau_r),\infty\bigr) 1\bigl\{\tau_r <
\tau_{s/2}(w)\bigr\} \mid\tau_r < \infty\bigr].
\end{eqnarray*}
Hence, we need to show that the right-hand side
equals ${\mathbb E}^*[G_{H_T}(w;z,\infty)]$.

We assume that the curve has the radial parameterization
heading to $z$.
We use Lemma~\ref{july} to see that as $r,s
\rightarrow\infty$,
\begin{eqnarray*}
&& {\mathbb E} \bigl[ G_{ H_{\tau_r}}\bigl(w;\gamma(\tau
_r),\infty\bigr) 1\bigl\{\tau_r <
\tau_{s/2}(w)\bigr\} \mid\tau_r < \infty\bigr]
\\
&&\qquad\sim{\mathbb E} \bigl[ G_{ H_{r/2}}\bigl(w;\gamma(r/2),\infty
\bigr) 1
\bigl\{\tau_r < \tau_{s/2}(w)\bigr\} \mid
\tau_r < \infty\bigr]
\\
&&\qquad\sim{\mathbb E} \bigl[ G_{ H_{r/2}}\bigl(w;\gamma(r/2),\infty
\bigr) {
\mathbb P}\bigl\{\tau_r < \tau_{s/2}(w) \mid
\gamma_{r/2}\bigr\} \mid\tau_r < \infty\bigr].
\end{eqnarray*}
We now use Proposition~\ref{onesided} to see that
the weighting by ${\mathbb P}\{\tau_r < \tau_{s/2}(w) \mid\gamma
_{r/2}\}$
is the same up to small error as weighting
by $G_{H_{s/2}}(z;\gamma(s/2),\infty)$ which is the
weighting which defines two-sided $\SLE$ going to $z$.
The arguments for justifying this are the same whether
one uses conformal radius or $\tau_r$ as the stopping
time, so the proof in \cite{LW}, Section~3, works here.
\end{pf*}

\begin{remark*}
The same method shows that we can define $n$-point Green's function and
we expect that
\[
{\mathbb E}\bigl[\Theta(D)^n\bigr]=\int_{D^n}
G(z_1,\ldots,z_n) \,dA(z_1)\cdots dA(z_n).
\]
At the moment, we cannot prove it because we have no upper bound for
$G(z_1,\ldots,z_n)$.
\end{remark*}


\subsection{Proof of (\texorpdfstring{\protect\ref{mainesalt}}{3.6})}

We will now prove (\ref{mainesalt}) for an appropriate $0 < u \leq1/4$
that we will define below. We assume that
$\Im(z),\Im(w) \geq1$ and $|z-w| > e^{-r/4}$. It suffices to
prove the result for $r > 4$, and hence $|z-w| > e^{-(r-2)/2}$.

Let $0 < q < 1/8$ be a parameter that we will
choose later. Let $\tau_{r} = \tau_{r}(z),
H = H_{\tau_{r}}(z), l_{3/4} = l_{3/4}(r,z),
\lambda= \lambda(r,z,3/4),
{\mathcal B}_u= {\mathcal B}_u(r,z), V_u = V_u(r,z)$ be
as in Section~\ref{twopointsec}.
Recall that we are assuming that
$|z-w| > e^{-r/2}$ and hence
$w \notin{\mathcal B}_{1/2}$.
Let $ {\mathcal I}_r(z,w)$
be the indicator function of the event that $\tau_{r} < \tau_{
q r}(w)$ and $w$ is
in the unbounded component of $H
\setminus l_{3/4}$ and let ${\mathcal J}_r(z,w)$
be the indicator function of the event that $w$ is
in the bounded component of $H
\setminus l_{3/4}$.

We consider two cases. First, suppose that
${\mathcal J}_r(z,w) = 1$. Then $w,z$ are both in the bounded
component of $H \setminus l_{3/4}$. Since $w \notin
{\mathcal B}_{1/2}$, there is
a unique subarc $l'$ of $\partial V_{3/4} \cap H$ such that
$z,w$ are in different components of $H \setminus l'$.
Since $w$ is in the bounded component of $H \setminus l_{3/4}$,
$l' \ne l_{3/4}$. In particular, $z$ is in the unbounded
component of $H \setminus l'$.
The Beurling estimate implies that the probability
that a Brownian motion starting at $w$ reaches
$l'$ without leaving $H$ is bounded above by $c e^{-r/8}$.
Hence, $S_{\tau_{r}} (w)\leq c e^{-r/8}$ and, therefore,
\begin{eqnarray*}
{\mathbb P}\bigl\{\tau_{r}(w) < \infty\mid\gamma_{\tau_{r} }
\bigr\} & \leq& c G_{H}\bigl(w;\gamma(\tau_{r}),\infty
\bigr) e^{r(d-2)}
\\
& \leq& c S_{\tau_{r} }(w)^{4a-1} \dist(w, \gamma_{\tau_{r }})^{d-2}
e^{r(d-2)}
\\
& \leq& c e^{-p r} \dist(w, \gamma_{\tau_{r} })^{d-2}
e^{r(d-2)},
\end{eqnarray*}
where $p = (4a-1)/8 > 0$.
We know from (\ref{aug205}) that
\[
{\mathbb P}\bigl\{\dist(w, \gamma_{\tau_{r}}) \leq e^{s},
\tau_{r}< \infty\bigr\} \leq c |w-z|^{d-2} e^{r(d-2)}
e^{s(d-2)}.
\]
By summing over positive integers $s \leq r$, we get
\[
{\mathbb P}\bigl\{\tau_{r} < \tau_{r}(w) < \infty, {
\mathcal J}_r(z,w) =1 \bigr\} \leq c r |w-z|^{d-2}
e^{2r(d-2)} e^{-p r}.
\]
In particular, if $|z-w| \geq e^{-ur}$,
where $u = p/[3(2-d)]$,
\[
{\mathbb P}\bigl\{\tau_{r}(z) < \tau_{r}(w) < \infty, {
\mathcal J}_r(z,w) =1 \bigr\} \leq c e^{2r(d-2)}
e^{- p r/2},
\]
which implies that if $|z-w| \geq e^{-ur}$,
\[
{\mathbb E} \bigl[ Q_r(z) Q_r(w) {\mathcal
J}_r(z,w) \bigr] \leq c e^{-p r/2}.
\]

For the remainder, we will assume that
${\mathcal I}_r(z,w) = 1$. Let $\sigma= \sigma_{3/4}(r-2,z)$
as in Section~\ref{twopointsec}. Let $\widetilde Q_r(z)$
be the analogue of $Q_r(z)$ for the curve
stopped at time~$\sigma$,
\[
\widetilde Q_r(z) = {e^{r(2-d)} } \bigl[ 1 \bigl\{
\tau_{r }(z) < \sigma\bigr\} - e^{\delta(2-d)} 1 \bigl\{
\tau_{r + \delta}(z) < \sigma\bigr\} \bigr].
\]
%
To establish our estimate,
we will show that
%
\begin{equation}
\label{aug203} \bigl\llvert{\mathbb E} \bigl[ \widetilde Q_r(z)
Q_r(w) {\mathcal I}_r(z,w) \bigr] \bigr\rrvert\leq c
e^{-\beta r},
\end{equation}
and
%
\begin{equation}
\label{aug204} {\mathbb E} \bigl[ \bigl|Q_r(z) - \widetilde Q_r(z)\bigr| \bigl|Q_r(w)\bigr| {\mathcal I}_r(z,w)
\bigr] \leq c e^{-\beta r},
\end{equation}
which together imply that
\[
\bigl\llvert{\mathbb E} \bigl[ Q_r(z) Q_r(w) {
\mathcal I}_r(z,w) \bigr] \bigr\rrvert\leq c e^{-\beta r}.
\]

To prove (\ref{aug203}), note that since
$\widetilde Q_r(z) {\mathcal I}_r(z,w)$ is $\gamma_\sigma$
measurable,
\[
{\mathbb E} \bigl[ \widetilde Q_r(z) Q_r(w) {\mathcal
I}_r(z,w) \bigr] = {\mathbb E} \bigl[ \widetilde Q_r(z) {
\mathcal I}_r(z,w) {\mathbb E}\bigl(Q_r(w) \mid
\gamma_\sigma\bigr) \bigr],
\]
and hence,
\[
\bigl\llvert{\mathbb E} \bigl[ \widetilde Q_r(z) Q_r(w)
{\mathcal I}_r(z,w) \bigr]\bigr\rrvert\leq{\mathbb E} \bigl[ \bigl|
\widetilde Q_r(z)\bigr| {\mathcal I}_r(z,w) \bigl|{\mathbb E}
\bigl(Q_r(w) \mid\gamma_\sigma\bigr)\bigr| \bigr].
\]
We appeal to (\ref{jan281}) to see that
\begin{eqnarray*}
\bigl|{\mathbb E}\bigl(Q_r(w) \mid\gamma_\sigma\bigr)\bigr| & \leq&
c e^{(2-d)r} \bigl[ e^{-r}/ \dist(w,\partial H_\sigma)
\bigr]^{2-d+ \alpha}
\\
& \leq& c \exp\bigl\{\bigl[(2-d) + (q-1) (2-d +\alpha)\bigr]r\bigr\}.
\end{eqnarray*}
%
In particular, if $q$ is chosen sufficiently small
so that $q(2-d) \leq\alpha(1-q)/2$,
\[
\bigl|{\mathbb E}\bigl(Q_r(w) \mid\gamma_\sigma\bigr)\bigr| \leq c
e^{-\alpha r/2},
\]
and hence
\begin{eqnarray*}
\bigl\llvert{\mathbb E} \bigl[ \widetilde Q_r(z) Q_r(w)
{\mathcal I}_r(z,w) \bigr]\bigr\rrvert& \leq& c e^{-r \alpha/2}
{\mathbb E} \bigl[ \bigl|\widetilde Q_r(z)\bigr| {\mathcal I}_r(z,w)
\bigr]
\\
& \leq& c e^{-r \alpha/2} e^{r(2-d)} {\mathbb P}\bigl[{\mathcal
I}_r(z,w)\bigr] \leq c e^{-r \alpha/2}.
\end{eqnarray*}

For (\ref{aug204}), we observe that
if $Q_r(z) \neq\widetilde Q_r(z)$, then
$\dist(z,\gamma) < \dist(z,\gamma_\sigma)$.
In other words,
\begin{eqnarray*}
&& {\mathbb E} \bigl[ \bigl|Q_r(z) - \widetilde Q_r(z)\bigr|
\bigl|Q_r(w)\bigr| {\mathcal I}_r(z,w) \bigr]
\\
&&\qquad\leq
c e^{2r(2-d)} {\mathbb P}\bigl\{{\mathcal I}_r(z,w)
= 1, \dist(z,\gamma) < \dist(z,\gamma_\sigma), \tau_{r}(w)
< \infty\bigr\}.
\end{eqnarray*}
We know that ${\mathbb P}\{{\mathcal I}_r(z,w) = 1\} \leq c
e^{r(d-2)}$. Hence, it suffices to show
that we can find $q, \beta> 0$ and $c < \infty$
such that
that on the event $ \{{\mathcal I}_r(z,w) = 1\} $,
\[
{\mathbb P}\bigl\{ \rho< \infty, \tau_r(w) < \infty\mid
\gamma_\sigma\bigr\} \leq c e^{r(2-d)} e^{-r \beta},
\]
where
$\rho= \inf\{t \geq\sigma\dvtx  |\gamma(t) - z| =
\dist(z,\gamma_\sigma)\}$. For every
integer $k$ with $qr + 1 < k < r-1$, we consider the event
\[
E_k = \bigl\{ \tau_k(w) < \rho< \tau_{k+1}(w)
< \tau_r(w)< \infty\bigr\}.
\]
We claim that there exists $c,\alpha$ such that
on the event $ \{{\mathcal I}_r(z,w) = 1\} $
%
\begin{equation}
\label{jan2810} {\mathbb P}(E_k \mid\gamma_\sigma) \leq c
e^{(2-d)(q-1) r } e^{-\alpha r}.
\end{equation}
Indeed, recall that on the
event on the event $ \{{\mathcal I}_r(z,w) = 1\} $,
$\dist(w,\partial H_\sigma) \geq e^{-qr}$.
If we use $\widetilde{\mathbb P}$ to denote conditional probabilities
given $\gamma_\sigma$, then
\begin{eqnarray*}
\widetilde{\mathbb P}\bigl\{\tau_k(w) < \infty\bigr\} &\leq& c
e^{(d-2)(k - qr)},
\\
\widetilde{\mathbb P}\bigl\{ \rho< \infty\mid\tau_k(w) < \infty\bigr\}
&\leq& c e^{-\alpha r},
\\
\widetilde{\mathbb P}\bigl\{ \tau_r(w)< \infty\mid
\tau_k(w) < \rho< \tau_{k+1}(w) < \infty\bigr\} &\leq& c
e^{(d-2)(r-k)}.
\end{eqnarray*}
By summing (\ref{jan2810}) over $k$, and then
choosing $q$ sufficiently small, we see that
\[
\widetilde{\mathbb P}\bigl\{ \rho< \infty, \tau_r(w) < \infty\bigr\}
\leq c r e^{(2-d)(q-1) r } e^{-\alpha r} \leq c e^{r(d-2)}
e^{-\alpha r/2}.
\]

At the end, we want to thank the referees for very careful reading of
the earlier draft of the paper and making many useful comments on that.
%
%



%

\printaddresses

\end{document}